\documentclass[11pt,a4paper,reqno,svgnames]{amsart}
%\documentclass[11pt,a4paper,reqno,svgnames]{jaums}
%%%%%%%%%%%%%%%%%%%%%%%%%%%%%%%%%%%%%%%%%%%%%%%%%%%%%%%%%%%%%%%%
%%%%%%%%%%%    load any packages which are needed    %%%%%%%%%%%
%%%%%%%%%%%%%%%%%%%%%%%%%%%%%%%%%%%%%%%%%%%%%%%%%%%%%%%%%%%%%%%%

%page geometry
%\usepackage[a4paper,margin=0.9in]{geometry}
%\usepackage{fullpage}

  \usepackage{mathtools}
 
 %stuff I think we don't use or need 

% \usepackage{rotating}
% \usepackage{booktabs}
%\usepackage{lscape}
% \usepackage[normalem]{ulem}
 
 %  lists
 \usepackage{enumitem}

%ams stuff 
 \usepackage{amsmath,amsthm,amsfonts,amssymb} 
\usepackage{amsxtra}
 
%fonts 
\usepackage{mathrsfs}
 \usepackage{bm, stmaryrd}
  \usepackage{scalefnt}
\usepackage{latexsym}

 %tikz graphics
% \usepackage{tikz,genyoungtabtikz}
%\usetikzlibrary{positioning,intersections, cd}
% \usetikzlibrary{decorations.pathreplacing}
% \usetikzlibrary{arrows,calc,through,backgrounds,matrix,decorations.pathmorphing}

% commutative diagrams
%\usepackage{pb-diagram}
%\input xy
%\xyoption{all}

%\usepackage{graphicx}             % to insert PostScript figures

%hyperref  and cleveref
\usepackage{xcolor}
\usepackage[plainpages=false, pdfpagelabels,  colorlinks=true, pdfstartview=FitV, linkcolor=DarkBlue, citecolor=DarkRed, urlcolor=blue]{hyperref}

\newcommand{\myref}[2]{\hyperref[#2]{#1~\ref*{#2}}}
%Example  if  a Lemma has label  {xyz}   then  write \myref{Lemma xyz}
\newcommand{\myrefnospace}[2]{\hyperref[#2]{#1\ref*{#2}}}

%%%%   changebars to indicate recent changes in the text
\usepackage[color]{changebar}
\cbcolor{red}  %default color for change bars
%usage   \cbstart......\cbend      places a  red bar in the margin to mark changed text
%equivalently    \begin{changebar}........\end{changebar}
% the following macros can be used to get different colors of change bars
%e.g   \begin{cbgreen}.....\end{cbgreen}   makes green change bars

\usepackage{comment}   %allows to comment out blocks of text usage
%  \begin{comment} ...... \end{comment}

 \DeclareMathOperator{\A}{A}
 
 \DeclareMathOperator{\Z}{\mathbb{Z}}
 \DeclareMathOperator{\Hom}{Hom}
\DeclareMathOperator{\End}{End}

\DeclareMathOperator{\Res}{Res}
\DeclareMathOperator{\Span}{Span}

\newcommand{\la}{\lambda}
\newcommand{\QQ}{{\mathbb{F}}}
\newcommand{\ZZ}{{\mathbb{Z}}}

\newcommand{\FF}{\mathbb F}

\renewcommand{\ge}{\geqslant}

\renewcommand{\ge}{\geqslant}
\renewcommand{\geq}{\geqslant}
\renewcommand{\le}{\leqslant}
\renewcommand{\leq}{\leqslant}
\renewcommand{\unrhd}{\trianglerighteqslant}

\renewcommand{\succeq}{\succcurlyeq}

 \newcommand{\Std}{{\rm Std}}

\newtheorem{thm}{Theorem}[section]
\newtheorem{cor}[thm]{Corollary}

\newtheorem{lem}[thm]{Lemma}
\newtheorem{prop}[thm]{Proposition}
\newtheorem*{prop*}{Proposition}
\newtheorem*{thm*}{Main Theorem}

\newtheorem*{cor*}{Corollary}
\newtheorem*{conj*}{Conjecture}

\theoremstyle{remark}

\newtheorem{rmk}[thm]{Remark}
\newtheorem{notation}[thm]{Notation}
\newtheorem{rem}[thm]{Remark}

\newtheorem*{Acknowledgements*}{Acknowledgements}

\theoremstyle{definition}
\newtheorem{defn}[thm]{Definition}
\newtheorem{eg}[thm]{Example}

\newtheorem*{ack}{Acknowledgements}

%\crefname{defn}{Definition}{Definitions}
%\crefname{thm}{Theorem}{Theorems}
%\crefname{prop}{Proposition}{Propositions}
%\crefname{lem}{Lemma}{Lemmas}
%\crefname{cor}{Corollary}{Corollaries}
%\crefname{conj}{Conjecture}{Conjectures}
%\crefname{section}{Section}{Sections}
%\crefname{subsection}{Subsection}{Subsections}
%\crefname{eg}{Example}{Examples}
%\crefname{figure}{Figure}{Figures}
%\crefname{rem}{Remark}{Remarks}
%\crefname{rmk}{Remark}{Remarks}
%\crefname{equation}{equation}{equation}
%
%
%\Crefname{defn}{Definition}{Definitions}
%\Crefname{thm}{Theorem}{Theorems}
%\Crefname{prop}{Proposition}{Propositions}
%\Crefname{lem}{Lemma}{Lemmas}
%\Crefname{cor}{Corollary}{Corollaries}
%\Crefname{conj}{Conjecture}{Conjectures}
%\Crefname{section}{Section}{Sections}
%\Crefname{subsection}{Subsection}{Subsections}
%\Crefname{eg}{Example}{Examples}
%\Crefname{figure}{Figure}{Figures}
%\Crefname{rem}{Remark}{Remarks}
%\Crefname{rmk}{Remark}{Remarks}

\newcommand{\mft}{\mathsf t}
\newcommand{\mfs}{\mathsf s}
\newcommand{\mfu}{\mathsf u}
\newcommand{\mfv}{\mathsf v}
\newcommand{\stt}{{\mathsf t}}
\newcommand{\sts}{{\mathsf s}}
\newcommand{\stu}{\mathsf u}
\newcommand{\stv}{\mathsf v}

\newcommand{\stw}{\mathsf w}

\newcommand{\Sym}{\mathfrak{S}}
\newcommand{ \Ind}{{\rm Ind}}
\renewcommand{\Res}{{\rm Res}}
\newcommand{\suchthat}{\;\ifnum\currentgrouptype=16 \middle\fi|\;} 
\newcommand\z[2]{z_{#1}^{#2}}   %\z r \mu   is the minimal central idempotent in A_r  corresponding to \mu \in \widehat A_r

\renewcommand{\c}{m}
\newcommand{\M}{m}

%%%%%%  some macros from previous papers
\def\power #1{^{(#1)}}

\def\leftbrace{\left\{}
\def\rightbrace{\right\}}
\def\ignore#1{\relax}

\def\deltabold{{\bm \delta}}

\def\qbold{{\bm q}}

\newcommand\boldq{\qbold}

\newcommand\cell[3]{\Delta_{#1}^{#2}(#3)}

\title[ Diagram algebras]
{Diagram algebras, \\ dominance triangularity, \\  and skew cell modules}
 \author{Christopher Bowman}
 \author{John Enyang}
 \author{Frederick Goodman}

% \subjclass%[2010]
% {20G05, 05E10  }

\keywords{Cellular algebras, diagram algebras}
 \subjclass{20G05, 05E10  }
 
 %\date{}

\begin{document}

\maketitle
%\setcounter{tocdepth}{1}
%\tableofcontents

\begin{abstract}  We present an abstract framework for the axiomatic study of diagram algebras. 
 Algebras that fit this framework possess analogues of both the Murphy  and   seminormal bases of the Hecke algebras of the symmetric groups. 
  We show that  the transition matrix between these bases is dominance unitriangular.
 We  construct analogues  of the skew Specht modules in this setting.  
 This allows us to propose a natural 
tableaux theoretic   framework in which to study the infamous Kronecker problem.   %This allows us to 
  \end{abstract}

\section{Introduction}  \label{section introduction}

  The   purpose of this article is to develop an Okounkov--Vershik-style framework in which to study towers of diagram algebras 
  $(A_r)_{r\geq 0}$ 
 over  an integral domain, $R$, and its field of fractions, $\mathbb{F}$.  
 The diagram algebras that fit into  our framework include
 group algebras of the symmetric groups and their Hecke algebras,  the Brauer and BMW algebras, walled Brauer algebras, Jones--Temperley--Lieb algebras, as well as centralizer algebras for the general linear, orthogonal, and symplectic groups acting on tensor spaces.

Following \cite{EG:2012}, 
we observe that  algebras   fitting into our framework possess analogues of both the Murphy  and   seminormal bases of the Hecke algebras of the symmetric groups. 
  We prove that  the transition matrix between these bases is dominance unitriangular.
    In case the algebras have Jucys--Murphy elements, we prove that  these elements act diagonally on the seminormal basis and triangularly with respect to dominance order on the Murphy basis.  
    For the Hecke algebras of symmetric groups, this provides a new and very simple proof of dominance triangularity of the Jucys--Murphy elements (see  \cite[Theorem 4.6]{MR1194316},  \cite[Theorem 3.32 and Proposition 3.35]{MR1711316}).     For other diagram algebras such as the  Brauer algebras, walled Brauer algebras,   BMW algebras or partition algebras, dominance triangularity is a new  result.  
Dominance triangularity is an extremely useful structural  result,   which has already found two distinct applications which we highlight below.  

There is a deeper structure of the Murphy cellular bases of diagram algebras which underlies dominance triangularity.   Elements of the Murphy basis can be written using an ordered product of certain ``branching factors".  There exist both ``down"  and ``up" branching factors and a compatibility relation between them.  Using this compatibility relation, one obtains a certain factorizability property of the Murphy basis elements.    The compatibility and factorizability properties lead to our dominance triangularity results, but also to strong results about restrictions of cell modules, the construction of skew cell modules,  and in \cite{BEG} to a construction of cellular basis for quotients of diagram algebras.   The compatibility and factorizability properties were already observed in ~\cite{EG:2012}, but they are first exploited systematically in this paper and in \cite{BEG}.

The results of this paper  play a crucial role  in \cite{BEG}, where we 
 construct  new integral Murphy-type cellular bases of the   Brauer algebras which  
  decompose into bases for the kernels and images of these algebras acting on tensor space. 
 This construction thus provides simultaneously an integral cellular basis of the centralizer algebra, and a new version of the second fundamental theorem of invariant theory.   
 All of these results are compatible with reduction from $\ZZ$ to an arbitrary field  
(characteristic 2 is excluded in the orthogonal case).

Given two fixed points $\lambda$ and $\nu$ in the $s$th and $r$th levels of the branching graph, we provide an explicit construction of 
an associated {\sf skew cell module} 
$\cell {r-s} {} {\nu \setminus \lambda}$.  
We show that these skew cell modules 
possess  integral  bases  indexed by skew tableaux (paths between the two fixed vertices in the graph) exactly as in the classical case of the symmetric group.   

In the case of the partition algebra, these skew modules provide a new setting in which to study the infamous Kronecker problem.  
In an upcoming paper  \cite{BDE2015}, the first two authors and Maud De Visscher use these skew modules to provide a uniform combinatorial  
interpretation for one of the largest  sets of Kronecker coefficients considered to date  (the Littlewood--Richardson coefficients and the Kronecker coefficients labelled by two 2-line partitions are covered as important examples).

\section{Diagram algebras}  \label{section diagram algebras}

For the remainder of the paper, we shall   let $R$ be an integral domain with field of fractions $\mathbb{F}$. 
In this section, we shall define  diagram algebras and recall the construction of their Murphy bases, following \cite{EG:2012}.  We first recall the definition of a cellular algebra, as in \cite{MR1376244}. 

\subsection{Cellular algebras}
\begin{defn}\label{c-d}
Let $R$ be an integral domain. A {\sf cellular algebra} is a tuple $(A,*,\widehat{A},\unrhd, \Std(\cdot), \mathscr{A})$ where
\begin{enumerate}[label=(\arabic{*}), ref=\arabic{*},leftmargin=0pt,itemindent=1.5em, series= cellular defn]
\item $A$ is a unital $R$--algebra and $*:A\to A$ is an algebra  involution, that is, an $R$--linear anti--automorphism of $A$ such that $(x^*)^* = x$ for $x \in A$;
\item $(\widehat{A},\unrhd)$ is a finite partially ordered set, and  for each $\lambda \in \widehat A$, $\Std(\lambda)$ is a finite indexing set;
\item The set  
$\mathscr{A}=\big\{c_\mathsf{st}^\lambda  \ \big | \  \text{$\lambda\in\widehat{A}$ and $\mathsf{s},\mathsf{t}\in\Std(\lambda)$}\big\}$
%\begin{align*}
%\mathscr{A}=\big\{c_\mathsf{st}^\lambda  \ \big | \  \text{$\lambda\in\widehat{A}$ and $\mathsf{s},\mathsf{t}\in\Std(\lambda)$}\big\}
%\end{align*}
is an $R$--basis for $A$.
\end{enumerate}
%\medskip
\noindent
  Let  $A^{\rhd\lambda}$ denote the $R$--module with basis 
  $\big\{
c^\mu_\mathsf{st}\ \mid \mu\rhd\lambda    \text{ and } \mathsf{s},\mathsf{t}\in\Std(\mu) 
\big\}.$
%\begin{align*}
%\big\{
%c^\mu_\mathsf{st}\ \mid \mu\rhd\lambda    \text{ and } \mathsf{s},\mathsf{t}\in\Std(\mu) 
%\big\}.
%\end{align*}
\begin{enumerate}[label=(\arabic{*}), ref=\arabic{*},leftmargin=0pt,itemindent=1.5em, resume= cellular defn]

\item The following  two conditions hold for the basis $\mathscr A$.

\begin{enumerate}[label=(\alph{*}), ref=\alph{*},itemindent=1.5em]
\item\label{c-d-1} Given $\lambda\in\widehat{A}$, $\mathsf{t}\in\Std(\lambda)$, and $a\in A$, there exist 
coefficients $r(  a; \mathsf{t}, \mathsf{v}) \in R$, for $\mathsf{v}\in\Std(\lambda)$, such that, for all $\mathsf{s}\in\Std(\lambda)$, 
\begin{align}\label{r-act}
c_\mathsf{st}^\lambda a\equiv 
\sum_{\mathsf{v}\in\Std(\lambda)}
r( a; \mathsf{t}, \mathsf{v}) c_{\mathsf{sv}}^\lambda \mod{A^{\rhd\lambda}},
\end{align}

\item\label{c-d-2} If $\lambda\in\widehat{A}$ and $\mathsf{s},\mathsf{t}\in\Std(\lambda)$, then $(c_\mathsf{st}^\lambda)^*  \equiv (c_{\stt\sts}^\lambda)  \mod A^{\rhd \lambda}$.
\end{enumerate}
\end{enumerate}
The tuple $(A,*,\widehat{A},\unrhd, \Std(\cdot), \mathscr{A})$ is a  {\sf cell datum} for $A$. 
   The basis $\mathscr{A}$ is called a {\sf  cellular basis} of $A$.  
\end{defn}
 
 If $A$ is a cellular algebra over $R$, and $R \to S$ is a homomorphism of integral domains, then  the specialization $A^S = A\otimes_R S$ is a cellular algebra over $S$, with
 cellular basis 
% $\mathscr A^S = \{c^\lambda_{\sts\stt}	\otimes 1_S	\mid  \lambda\in\widehat{A},  \text{ and }\mathsf{s},\mathsf{t}\in\Std(\lambda) \}.$
 $$\mathscr A^S = \{c^\lambda_{\sts\stt}	\otimes 1_S	\mid  \lambda\in\widehat{A},  \text{ and }\mathsf{s},\mathsf{t}\in\Std(\lambda) \}.$$
 In particular, $A^\FF$ is a cellular algebra.     Since the map $a \mapsto a \otimes 1_\FF$ is injective, we regard $A$ as contained in $A^\FF$ and  we identify $a \in A$ with $a \otimes 1_\FF \in A^\FF$.  
 
An order ideal $\Gamma \subset \widehat A$ is a subset with the property that if $\la \in \Gamma$ and 
$\mu \unrhd \la$, then $\mu \in \Gamma$.  It follows from the axioms of a cellular algebra that for any order ideal $\Gamma$ in $\widehat A$, 
$$
A^\Gamma = \Span \big\{c^\la_{\mfs \mft}  \ \big \vert \   \la \in \Gamma, \mfs, \mft \in  \Std(\lambda) \big\}
$$
is an involution--invariant two sided ideal of $A$.  In particular $A^{\rhd \la}$ and
$$
A^{\unrhd \la} = \Span\ \big\{
c^\mu_{\sf st}\ \big \vert \  \text{$\mu\in\hat{A}$, $\Sym,{\sf t}\in \Std(\mu)$ and $\mu\unrhd\lambda$}
\big\}
$$
are involution--invariant two sided ideals.

\begin{defn} \label{definition: cell module}
Let $A$ be a cellular algebra over $R$ and $\lambda\in\hat{A}$. The  {\sf cell module} $\cell  {} {}  \lambda$  is the right $A$--module defined as follows.  As an $R$--module, $\cell  {} {} \lambda$ is free with basis indexed by $\Std(\lambda)$,  say $\{c^\la_\mft  \mid \mft \in \Std(\la) \}$.  
The right $A$--action is given by 
$$
c_\mft^\la a =  \sum_{{{\sf v}\in\hat{A}^\lambda}} r( a; \mathsf{t}, \mathsf{v})  c^\la_\mfv,
$$
where the coefficients $r( a; \mathsf{t}, \mathsf{v})$  are those of Equation~\eqref{r-act}.
\end{defn}

Thus, for any $\mfs \in \widehat A^\la$,  
$$
\Span\{ c^\la_{\mfs \mft} + A^{\rhd \la} \mid  \mft \in \Std(\lambda)\} \subseteq A^{\unrhd \la}/A^{\rhd \la}
$$ 
is a model for the cell module $\cell {}{}\la$. 
When we need to emphasize the algebra or the ground ring, we may write $\cell A {} \lambda$ or 
 $\cell {} R \lambda$.   Note that $\cell {} \FF \lambda = \cell {} {} \lambda \otimes_R \FF$ is the cell module for $A^\FF$ corresponding to $\lambda$.  
 
If $A$ is an $R$--algebra with involution $*$,  then $*$ induces functors $M \to M^*$   interchanging left and right $A$--modules, and taking $A$--$A$ bimodules to $A$--$A$ bimodules.   We identify $M^{**}$ with 
$M$ via $x^{**} \mapsto x$  and for modules ${}_A M$ and $N_A$ we have 
$
(M \otimes_R N)^* \cong  N^* \otimes_R M^*,  
$
as $A$--$A$ bimodules, with the isomorphism determined by $(m \otimes n)^* \mapsto n^* \otimes m^*$. 
For a right $A$--module $M_A$, using both of these isomorphisms, we identify 
$(M^* \otimes M)^*$ with $M^{*} \otimes M^{**}  = M^* \otimes M$, via 
$(x^* \otimes y)^*  \mapsto y^* \otimes x$.
Now we apply these observations with $A$ a cellular algebra and $\cell {} {} \la$ a cell module.  The assignment $$\alpha_\la : c^\la_{\mfs \mft}  + A^{\rhd \la}  \mapsto  (c^\la_\mfs)^* \otimes (c^\la_\mft)$$ determines an
$A$--$A$ bimodule isomorphism from $A^{\unrhd\la}/\A^{\rhd\la}$ to $(\cell {} {} \la)^* \otimes_R \cell {}  {} \la$. Moreover,
we have   $*\circ \alpha_\la =  \alpha_\la \circ *$, which reflects the cellular algebra axiom
$(c^\la_{\mfs \mft})^* \equiv c^\la_{\mft \mfs} \mod  A^{\rhd \la}$.  

\def\inv{^{-1}}

A certain bilinear form on the cell modules plays an essential role in the theory of cellular algebras.    Let $A$ be a cellular algebra over $R$ and let $\lambda \in \widehat A$.   The cell module $\cell {}{}\la$ can be regarded as an $A/A^{\rhd \la}$ module.
 For
$x, y, z \in \cell {}{}\lambda$, it follows from the the definition of  the cell module and the map $\alpha_\lambda$ that $x \alpha_\la\inv(y^* \otimes z) \in R z$.  Define $\langle x, y \rangle$ by
\begin{equation} \label{defn of bilinear form}
x \alpha_\la\inv(y^* \otimes z)  =  \langle x, y \rangle z.
\end{equation}
Then $ \langle x, y \rangle$  is $R$-linear in each variable and we have $\langle x a, y\rangle =
\langle x,  y a^* \rangle$  for $x, y \in \cell {}{}\lambda$ and $a \in A$.    Note that
$$
c^\lambda_{\mfs \mft}  c^\lambda_{\mfu \mfv} = \langle c^\la_\mft, c^\la_\mfu \rangle c^\la_{\mfs \mfv},
$$
which is the customary definition of the bilinear form.

\begin{defn}[\mbox{\cite{MR3065998}}]
A cellular algebra, $A$,  is said to be {\sf cyclic cellular} if every cell module  is cyclic as an $A$-module.   \end{defn}

If $A$ is cyclic cellular, $\la \in \widehat A$, and $\delta(\la)$ is a generator of the cell module $\cell {}{}\la$,
let $\c_\la$ be a lift in $A^{\unrhd \la}$ of $\alpha_\la\inv(\delta(\la)^* \otimes \delta(\la))$.  

\begin{lem} \label{properties of c lambda}
The element
$\c_\la$ has the following properties:
\begin{enumerate}[label=(\arabic{*}), font=\normalfont, align=left, leftmargin=*]
\item \label{cyclic gen a} $\c_\la \equiv \c_\la^* \mod{A^{\rhd \la}}$.
\item  \label{cyclic gen b} $A^{\unrhd \la} =  A \c_\la A +  A^{\rhd \la}$.
\item   \label{cyclic gen c}  $(\c_\la A + A^{\rhd \la})/A^{\rhd \la} \cong \cell {} {} \la$, as right $A$--modules. 
\end{enumerate}
\end{lem}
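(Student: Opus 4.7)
The plan is to reduce all three assertions to elementary statements about the element $\delta(\la)^* \otimes \delta(\la)$ in $(\cell{}{}{\la})^* \otimes_R \cell{}{}{\la}$, exploiting the $A$-$A$ bimodule isomorphism $\alpha_\la$ and the hypothesis that $\delta(\la)$ generates $\cell{}{}{\la}$ cyclically as a right $A$-module. This way each item becomes a one-line check at the level of tensors, and the algebraic content of the axioms is already absorbed into the construction of $\alpha_\la$.

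For part \ref{cyclic gen a}, under the identification $(x^* \otimes y)^* \mapsto y^* \otimes x$ given just before the lemma, the tensor $\delta(\la)^* \otimes \delta(\la)$ is visibly fixed by $*$. Combining this with the compatibility $* \circ \alpha_\la = \alpha_\la \circ *$ (which encodes the axiom $(c^\la_{\mfs\mft})^* \equiv c^\la_{\mft\mfs} \mod A^{\rhd\la}$) shows that $\alpha_\la\inv(\delta(\la)^* \otimes \delta(\la))$ is $*$-invariant in $A^{\unrhd\la}/A^{\rhd\la}$, which is exactly the congruence $\c_\la \equiv \c_\la^* \pmod{A^{\rhd\la}}$ for any lift.

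For part \ref{cyclic gen b}, I would use that $\alpha_\la$ is a bimodule isomorphism to transport the sub-bimodule $(A \c_\la A + A^{\rhd\la})/A^{\rhd\la}$ of $A^{\unrhd\la}/A^{\rhd\la}$ to the sub-bimodule of $(\cell{}{}{\la})^* \otimes \cell{}{}{\la}$ generated by $\delta(\la)^* \otimes \delta(\la)$. Cyclicity of $\delta(\la)$ as a right-$A$-generator of $\cell{}{}{\la}$ dualizes to cyclicity of $\delta(\la)^*$ as a left-$A$-generator of $(\cell{}{}{\la})^*$, so $A \cdot (\delta(\la)^* \otimes \delta(\la)) \cdot A$ spans the whole tensor product; translating back and using that $A^{\unrhd\la}/A^{\rhd\la}$ is the preimage of $(\cell{}{}{\la})^* \otimes \cell{}{}{\la}$ yields $A \c_\la A + A^{\rhd\la} = A^{\unrhd\la}$.

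The one-sided version of the same argument settles part \ref{cyclic gen c}: the image of $(\c_\la A + A^{\rhd\la})/A^{\rhd\la}$ under $\alpha_\la$ is the right $A$-submodule generated by $\delta(\la)^* \otimes \delta(\la)$, which equals $\delta(\la)^* \otimes (\delta(\la) \cdot A) = \delta(\la)^* \otimes \cell{}{}{\la}$ by cyclicity of $\delta(\la)$; and $c \mapsto \delta(\la)^* \otimes c$ is a right-$A$-module isomorphism $\cell{}{}{\la} \cong \delta(\la)^* \otimes \cell{}{}{\la}$. The only real obstacle in writing this up carefully is consistent bookkeeping of the identifications $M^{**}=M$ and $(M^* \otimes N)^* = N^* \otimes M$ when moving between $A$ and $(\cell{}{}{\la})^* \otimes \cell{}{}{\la}$; once that is fixed, the verifications are purely formal manipulations of cyclic generators in a tensor product of bimodules.
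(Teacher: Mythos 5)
Your proof is correct. The paper gives no argument of its own here (it simply cites Lemma~2.5 of the reference), and your reduction of all three parts to properties of the tensor $\delta(\la)^*\otimes\delta(\la)$ under the bimodule isomorphism $\alpha_\la$, together with cyclicity of $\delta(\la)$, is exactly the standard argument from that source; the only step worth making explicit is that injectivity of $c\mapsto\delta(\la)^*\otimes c$ in part~(3) relies on $\cell {} {} \la$ being a free module over the integral domain $R$ (so that a nonzero $\delta(\la)^*$ cannot kill a nonzero $c$ in the tensor product).
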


\begin{proof}  Lemma 2.5 in ~\cite{MR3065998}.
\end{proof}

\noindent In examples of interest to us, we can always choose $\c_\la$ to satisfy $\c_\la^* = \c_\la$ (and moreover, $\c_\la$ is given explicitly).

\ignore{
\begin{defn}
A cellular algebra $A$ over the integral domain $R$ is said to be {{\sf generically semisimple}} if 
$A^\FF=A\otimes_R \FF$ is semisimple.   
\end{defn}
}

\subsection{Sequences of diagram algebras}  \label{subsection diagram algebras}
 Here and in the remainder of the paper, we will consider    an   increasing sequence   $(A_r)_{r \ge 0}$
of  cellular  algebras over an integral domain $R$ with field of fractions $\FF$.   
We assume that all the inclusions are unital and that the involutions are consistent; that is the involution on $A_{r+1}$, restricted to $A_r$, agrees with the involution on $A_r$.      We will establish a list of assumptions \eqref{diagram 1}--\eqref{diagram compatibility}.  For convenience, we call an increasing sequence of  cellular algebras satisfying these assumptions a {\sf sequence of diagram algebras}. 

Let $(\widehat A_r, \unrhd)$ denote the partially ordered set in the cell datum for $A_r$.   For $\lambda \in \widehat A_r$,  let $\Delta_r(\lambda)$  denote the corresponding  cell module.    If $S$ is an integral domain with a unital homomorphism $R \to S$,  write $A_r^S = A_r \otimes_R S$  and $\Delta_r^S(\lambda)$  for $\Delta_r(\lambda) \otimes_R S$.    In particular, write 
 $A_r^\FF = A_r \otimes_R \FF$  and $\Delta_r^\FF(\lambda)$  for $\Delta_r(\lambda) \otimes_R \FF$.

 \begin{defn} \label{definition: cell-filtration}  Let $A$ be a cellular  algebra over $R$.  
 If $M$ is a right $A$--module, a  {\sf  cell-filtration}  of  $M$  is a filtration by right $A$--modules
\begin{align*}
\{ 0\}= M_0  \subseteq   M_1  \subseteq \cdots  \subseteq    M_s=M,
\end{align*}
such that $  M_{i} / M_{i-1} \cong  \cell {} {} {\lambda\power i}$ for some $\lambda^{(i)} \in \widehat{A}$.   We  say that the filtration is {\sf  order preserving} if  $\lambda^{(i)}\rhd \lambda^{(i+1)}$ in
  $\widehat{A}$  for all $i\geq 1$.  
 \end{defn}

 \begin{defn}[\cite{MR2794027,MR2774622}]  Let  $(A_r)_{r \ge 0}$ be an increasing sequence of cellular algebras over an integral domain $R$.
  \begin{enumerate}[label=(\arabic{*}), font=\normalfont, align=left, leftmargin=*] 
  \item  The tower $(A_r)_{r \ge 0}$ is {\sf  restriction--coherent} if 
  for each $r \ge 0$ and each   $\mu \in \widehat{A}_{r+1}$,    the restricted module $\Res_{A_r}^{A_{r+1}}(\Delta_{r+1}(\mu))$  has an order preserving cell-filtration.  
  \item   A tower  $(A_r)_{r \ge 0}$ is {\sf  induction--coherent} if 
for each $r\ge 0$ and each  $\lambda \in \widehat{A}_r$,  the induced module $\Ind_{A_r}^{A_{r+1}}(\Delta^R_r(\la))$   has an order preserving  cell-filtration. 
\item The tower  $(A_r)_{r \ge 0}$ is {\sf  coherent} if it is both restriction-- and induction--coherent.
\end{enumerate}
 \end{defn}
 
 \begin{rem}  We have changed the terminology from ~\cite{EG:2012,MR2794027,MR2774622}, as the weaker notion of coherence, in which the order preserving requirement is omitted, plays no role here. 
 \end{rem}

\noindent We now list the first of our assumptions for a sequence of diagram algebras:
 \begin{enumerate}[label=(D\arabic{*}), ref=D\arabic{*},  series = DiagramAlgebras]
\item  \label{diagram 1}  $A_0 = R$.
 \item   \label{diagram 2}  \label{diagram cyclic cellular}  The algebras $A_r$ are cyclic cellular  for all $r \geq 0$.  
 \end{enumerate}
 
 For all $k$ and for all $\lambda \in \widehat A_k$, fix once and for all a bimodule isomorphism
$\alpha_\lambda: A_k^{\unrhd \la}/A_k^{\rhd \la} \to (\cell k {} \lambda)^* \otimes_R \cell k {} \lambda$, 
a generator $\delta_k(\la)$ of the cyclic $A_k$--module $\cell k {} \lambda$,  
and an element $\M_\lambda \in A_k^{\unrhd \la}$  satisfying  $\alpha_\la(\M_\la + A_k^{\rhd \la}) = (\delta_k(\la))^* \otimes \delta_k(\la)$.    Recall the properties of $\M_\la$ from
\myref{Lemma}{properties of c lambda}.  We require the following mild assumption on the elements $m_\lambda$.

 \begin{enumerate}[label=(D\arabic{*}), ref=D\arabic{*},  resume = DiagramAlgebras]
 \item  \label{diagram symmetry}  \label{diagram 5}
 $\M_\la = \M_\la^*$.
 \end{enumerate}
  Our list of assumptions continues as follows:
   \begin{enumerate}[label=(D\arabic{*}), ref=D\arabic{*},  resume = DiagramAlgebras]
 \item  \label{diagram 3}  \label{diagram semisimplicity}
 $A_r^\FF$ is split semisimple for all $r\geq 0$.  
\item  \label{diagram 4} \label{diagram restriction coherent}
 The sequence of algebras  $(A_r)_{r \ge 0}$  is  restriction--coherent.  
\end{enumerate}

As discussed in ~\cite[Section 3]{EG:2012},  under the assumptions \eqref{diagram 1}--\eqref{diagram restriction coherent}
above, there exists a well defined  %\break 
multiplicity--free branching diagram $\widehat A $ associated with the sequence $(A_r)_{r \ge 0}$.  The branching diagram is an infinite, graded, directed graph with vertices $\widehat A_r$ at level $k$ and edges determined as follows. 
For $\lambda \in \widehat A_{r-1}$ and $\mu \in \widehat A_r$, there is an edge $\lambda \to \mu$ in $\widehat A$ if and only if $\Delta_{r-1}(\lambda)$  appears as a subquotient of an order preserving cell filtration of  $\Res^{A_{r}}_{A_{r-1}} (\Delta_r(\mu))$.   Note that $\widehat A_0$ is a singleton;  we denote   its unique element by $\varnothing$.   
 We can choose $\cell 0 {} \varnothing = R$,  $\delta_0(\varnothing) = 1$, and $\M_\varnothing = 1$.

  \begin{defn}
Given  $\nu \in \widehat{A}_{r}$, we  
define a  {\sf  standard   tableau} of shape $ \nu$  to be a 
directed path  $\stt$  on the branching diagram $\widehat A$ from $\varnothing \in \widehat A_0$ to $\nu$, 
 $$
\stt = (\varnothing= \stt(0) \to \stt(1) \to    \stt(2)\to   \dots \to  \stt(r-1)\to \stt(r) = \nu). 
 $$ 
We   let $\Std_r(\nu ) $     denote the set of all such paths and let 
$\Std_{r}= \cup_{ \nu \in \widehat{A}_r}\Std_{r}(\nu )$.    \end{defn}

It is shown in in ~\cite[Section 3]{EG:2012} that 
there exist certain ``branching factors" 
$d_{\lambda \to \mu} \in A_r$ associated to each edge $\lambda \to \mu$ in $\widehat A$, related to the cell filtration of $\Res^{A_{r}}_{A_{r-1}} (\Delta_r(\mu))$.  Given a  path
$\mft \in \Std_{r}(\nu )$, 
  $$
\varnothing = \stt(0) \to \stt(1) \to    \stt(2)\to   \dots \to  \stt(r-1)\to \stt(r) = \nu, 
 $$
 define
$
d_\mft =  d_{\stt({r-1}) \to \stt( r)}  d_{ \stt( {r-2}) \to \stt( {r-1})} \cdots  d_{\stt(0) \to \stt( 1)}.
$

We say two cellular bases of an algebra $A$ with involution are {\sf equivalent} if they determine the same
two sided ideals $A^{\unrhd \lambda}$  and isomorphic cell modules.

\begin{thm}[\cite{EG:2012},  Section 3] \label{theorem abstract Murphy basis}  
Let $(A_r)_{r \ge 0}$ be a sequence of algebras satisfying assumptions \eqref{diagram 1}--\eqref{diagram restriction coherent}. 
\begin{enumerate}[label=(\arabic{*}), font=\normalfont, align=left, leftmargin=*]
\item Let $\lambda \in \widehat A_r$.  The set  $\{ \c_\lambda d_\mft  +  A_r^{\rhd \lambda} \suchthat    \mft \in \Std_r(\lambda)  \}$   is a basis of the cell module $\Delta_r(\lambda)$. 
\item The set  $\{ d_\mfs^* \c_\lambda d_\mft \suchthat  \lambda \in \widehat A_r,  \mfs, \mft \in \Std_r(\lambda) \} $ is  a cellular basis of $A_r$, equivalent to the original cellular basis. 
\item  For a fixed $\lambda \in \widehat A_r$, we let $\mu(1) \rhd \mu(2) \rhd \cdots  \rhd \mu(s)$ be a listing of the $\mu \in \widehat A_{r-1}$ such that $\mu \to \lambda$.    Let
$$
M_j = \Span_R \leftbrace  \c_\lambda d_\mft +  A_r^{\rhd \lambda} \suchthat  \mft \in \Std_r(\lambda), \mft(k-1) \unrhd \mu(j)                         \rightbrace.
$$
Then
$$
(0) \subset  M_1 \subset  \cdots \subset  M_s = \Delta_r(\lambda)
$$
is a filtration of $\Delta_r(\lambda)$ by $A_{r-1}$ submodules, and $M_j/M_{j-1} \cong \Delta_{r-1}(\mu_j)$.  
\end{enumerate}
\end{thm}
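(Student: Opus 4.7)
The plan is to prove all three parts simultaneously by induction on $r$, using the branching factors $d_{\lambda\to\mu}$ together with the order-preserving cell-filtration of restricted modules guaranteed by the restriction-coherence assumption \eqref{diagram restriction coherent}. The base case $r=0$ is immediate, since $A_0=R$, $\widehat{A}_0=\{\varnothing\}$, $m_\varnothing=1$, and $\Std_0(\varnothing)$ consists of the single trivial path, making all three statements tautological.

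For the inductive step, fix $\lambda\in\widehat{A}_r$ and enumerate the predecessors of $\lambda$ in the branching graph as $\mu_1\rhd\cdots\rhd\mu_s$. By \eqref{diagram restriction coherent} there is an order-preserving cell-filtration $0=N_0\subset N_1\subset\cdots\subset N_s=\Res^{A_r}_{A_{r-1}}(\Delta_r(\lambda))$ with $N_j/N_{j-1}\cong\Delta_{r-1}(\mu_j)$. The defining property of the branching factors from \cite[Section 3]{EG:2012} is that $m_\lambda\, d_{\mu_j\to\lambda}\in N_j$ and that its image in $N_j/N_{j-1}\cong\Delta_{r-1}(\mu_j)$ is the cyclic generator $\delta_{r-1}(\mu_j)=m_{\mu_j}+A_{r-1}^{\rhd\mu_j}$. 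Now any $\mft\in\Std_r(\lambda)$ with $\mft(r-1)=\mu_j$ is the extension of a path $\mfu\in\Std_{r-1}(\mu_j)$ by the edge $\mu_j\to\lambda$, and consequently $d_\mft=d_{\mu_j\to\lambda}\, d_\mfu$ with $d_\mfu\in A_{r-1}$. Hence, modulo $N_{j-1}$, the element $m_\lambda d_\mft$ corresponds to $\delta_{r-1}(\mu_j)\cdot d_\mfu = m_{\mu_j} d_\mfu + A_{r-1}^{\rhd\mu_j}$, and by the inductive hypothesis applied to $A_{r-1}$ these range over a basis of $\Delta_{r-1}(\mu_j)$ as $\mfu\in\Std_{r-1}(\mu_j)$. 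Splicing over $j=1,\dots,s$ yields part~(1). Since $\mft(r-1)\unrhd\mu_j$ is equivalent to $\mft(r-1)\in\{\mu_1,\dots,\mu_j\}$, the submodule $M_j$ of the theorem coincides with $N_j$ under this identification, giving part~(3).

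For part~(2), the bimodule isomorphism $\alpha_\lambda\colon A_r^{\unrhd\lambda}/A_r^{\rhd\lambda}\to(\Delta_r(\lambda))^*\otimes_R\Delta_r(\lambda)$ carries the class of $d_\mfs^*\, m_\lambda\, d_\mft$ to the pure tensor $(m_\lambda d_\mfs+A_r^{\rhd\lambda})^*\otimes(m_\lambda d_\mft+A_r^{\rhd\lambda})$. By part~(1) these pure tensors form an $R$-basis of the target, so the elements $\{d_\mfs^* m_\lambda d_\mft+A_r^{\rhd\lambda}:\mfs,\mft\in\Std_r(\lambda)\}$ form an $R$-basis of the layer $A_r^{\unrhd\lambda}/A_r^{\rhd\lambda}$. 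A descending induction on the poset $\widehat{A}_r$ then lifts these layer bases to an $R$-basis of $A_r$. The involution axiom is immediate from $(d_\mfs^* m_\lambda d_\mft)^*=d_\mft^* m_\lambda d_\mfs$ using $m_\lambda^*=m_\lambda$ from \eqref{diagram symmetry}; the triangular action axiom is inherited from the right $A_r$-action on $\Delta_r(\lambda)$ encoded in part~(1). Equivalence with the original cellular basis holds because the chains $(A_r^{\unrhd\lambda})$ agree and the cell modules obtained are isomorphic to the originals (both are cyclically generated by $m_\lambda + A_r^{\rhd\lambda}$).

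The main obstacle lies entirely in the input imported from \cite{EG:2012}: verifying that the branching factors $d_{\mu_j\to\lambda}$ can be chosen simultaneously to realize the cell-filtration of $\Res^{A_r}_{A_{r-1}}(\Delta_r(\lambda))$ and to be compatible with the particular cyclic generator $\delta_{r-1}(\mu_j)$. This is precisely the compatibility property emphasized in the introduction, and it is what makes the simple formal identity $d_\mft = d_{\mu_j\to\lambda}\, d_\mfu$ actually transport a basis of $\Delta_{r-1}(\mu_j)$ correctly to $N_j/N_{j-1}$. Once this single input is granted, the inductive argument above is essentially mechanical.
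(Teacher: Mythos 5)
The proposal is correct and follows essentially the same route as the source the paper cites for this theorem (the paper gives no proof of its own, deferring to \cite{EG:2012}): there the branching factors $d_{\mu\to\lambda}$ are defined exactly so that $\c_\lambda d_{\mu\to\lambda}$ lifts the cyclic generator of the relevant layer of the order-preserving cell filtration of $\Res^{A_r}_{A_{r-1}}(\Delta_r(\lambda))$, and the basis, the filtration statement, and cellularity are obtained by the same induction on $r$ combined with the bimodule isomorphism $\alpha_\lambda$. Your identification of the one genuine input --- compatibility of the chosen branching factors with the cyclic generators $\delta_{r-1}(\mu_j)$ --- is also the right place to locate the content imported from \cite{EG:2012}.
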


\begin{notation}
We write $m_{\mfs  \mft}^\lambda =   d_\mfs^*  \c_\lambda d_\mft$.    Also write
$m_\mft =  \c_\lambda d_\mft + A_r^{\rhd \lambda} \in \Delta_r(\lambda)$.   We refer to the cellular basis
$\{m_{\mfs  \mft}^\lambda  \suchthat  \la \in \widehat A_r \text{ and }  \mfs, \mft \in \Std_r(\la)\}$ as the {\sf Murphy  cellular basis} of $A_r$  and $\{m^\la_\mft  \suchthat \mft \in \Std_r(\la) \}$  as the 
{\sf Murphy  basis} of the cell module $\cell k {} \la$. 
\end{notation}

We will now continue with our list of assumed properties of the sequence of algebra 
$(A_r)_{r \ge 0}$ with one final axiom.  
 
 \begin{enumerate}[label=(D\arabic{*}), ref=D\arabic{*},  resume=DiagramAlgebras]
 \item  \label{diagram 7} \label{diagram compatibility}
 There exist   `$u$-branching factors' $u_{\mu \to \lambda}\in A_{r+1}^R$ such that 
 \begin{equation} \label{abstract branching compatibility}
 \c_\lambda d_{\mu \to \lambda} =   (u_{\mu \to \lambda})^*  \c_\mu. 
\end{equation}
\end{enumerate}

 \begin{eg}\label{theoneexample}  It is shown in ~\cite{EG:2012} that the Hecke algebras of type $A$, the symmetric group algebras,  the   Brauer algebras, the Birman--Wenzl--Murakami algebras,  the partition algebras, and the Jones--Temperley--Lieb algebras  all are examples of sequences of algebras satisfying properties \eqref{diagram 1}--\eqref{diagram compatibility}.  For the Hecke algebras, the cellular basis of 
 \myref{Theorem}{theorem abstract Murphy basis} agrees with the Murphy basis from \cite{MR1327362}, up to a normalization.
In each case the ground ring $R$ can be taken to be the generic ground ring for the class of algebras.  For example, for the Hecke algebras, this is $\Z[\boldq, \boldq\inv]$, and for the Brauer algebras it is $\Z[\deltabold]$, where  $\boldq$ and   $\deltabold$ are indeterminants.

Let  $G_n$ be either the general linear group ${ \sf GL}_n$, the orthogonal group $ {\sf O}_n$, or the symplectic group ${ \sf Sp}_{2n}$   and let $V$ denote its natural module.  
 The centralizer algebra $\End_{G_n}(V^{\otimes r})$ is a quotient of the symmetric group $\mathfrak{S}_r$, the Brauer algebra $B_r(n)$, or 
the Brauer algebra $B_r(-2n)$ respectively.   
In each case the ground ring $R$ can be taken to be $\ZZ$.  
It is shown in \cite{BEG} that the algebras $\End_G(V^{\otimes r})$ all satisfy axioms  \eqref{diagram 1}--\eqref{diagram compatibility}.  
\end{eg}

  \begin{defn}
Given $0\leq s \leq r$ and    $\lambda  \in   \widehat{A}_{s}$, $\nu \in \widehat{A}_{r}$, we  
define a  {\sf  skew  standard   tableau} of shape $ \nu \setminus  \lambda  $ and degree $r-s$  to be a directed path $\stt$  on the branching diagram $\widehat A$ from $\lambda$ to $\nu$, 
 $$
\stt = (\lambda = \stt(s) \to \stt(s+1) \to    \stt(s+2)\to   \dots \to  \stt(r-1)\to \stt(r) = \nu). 
 $$
We let $\Std_{s,r}(\nu \setminus  \lambda)$ denote the set of all such paths with given $\lambda$ and $\nu$. 
 Given $0\leq s \leq r$, we set $\Std_{s,r}= \cup_{\la \in \widehat{A}_s,\nu \in \widehat{A}_r}\Std_{s,r}(\nu\setminus\lambda)$.  
 \end{defn}

% \begin{rmk}  
 Given two paths $\sts \in \Std_{q, s}(\mu \setminus \lambda)$ and $\stt \in \Std_{s, r}(\nu \setminus \mu)$ such that the final point of $\sts$ is the initial point of $\stt$,  define $\sts \circ \stt$ to be the obvious path obtained by concatenation.
%\end{rmk}
 
 \begin{rmk} \label{remark factorization}
 Given a  path
$\mft \in \Std_{s,r}(\nu \setminus  \lambda)$  of the form  
 $$
\lambda = \stt(s) \to \stt(s+1) \to    \stt(s+2)\to   \dots \to  \stt(r-1)\to \stt(r) = \nu, 
 $$
define
\begin{align*}
d_\mft &=  d_{\stt({r-1}) \to \stt( r)}  d_{ \stt( {r-2}) \to \stt( {r-1})} \cdots  d_{\stt(s) \to \stt(s+ 1)},  \\
\intertext{and}
u_\mft  &=     u_{\stt(s) \to \stt( s+1)}  \cdots
u_{ \stt( {r-2}) \to \stt( {r-1})} u_{\stt({r-1}) \to \stt( r)}.
\end{align*}
  Then it follows from the compatibility relation \eqref{abstract branching compatibility} and induction on $r -s$ that
\begin{equation} \label{abstract branching compatibility for path}
u_\mft^*  \c_\la =  \c_\nu  d_\mft.
\end{equation}
Because $\c_\varnothing$ can be chosen to be $1$,  this gives in particular for $\mft \in \Std_r(\nu)$, 
\begin{equation} \label{abstract branching compatibility for path 2}
u_\mft^*  =  \c_\nu  d_\mft.
\end{equation}
Therefore the cellular basis $\{ m_{\mfs \mft}^\nu\}$  can also be written in the apparently asymmetric form
$$
 m_{\mfs  \mft}^\nu = d_\mfs^* \c_\nu d_\mft =  d_\mfs^*  u_\mft^*.  
$$
Using the symmetry of the cellular basis  $(m_{\mfs  \mft}^\nu)^* = m_{\mft \mfs}^\nu$ (which follows from the assumption \eqref{diagram symmetry}), we also get
$$
 m_{\mfs  \mft}^\nu = u_\mfs  d_\mft.
$$
Using \eqref{abstract branching compatibility for path 2}, we have the following form for the basis $\{m^\nu_\mft \suchthat  \mft \in \Std_r(\nu) \}$ of the cell module $\cell r {} \nu$:
\begin{equation}\label{abstract branching compatibility for path 3}
m^\nu_\mft =  u^*_\mft + A_r^{\rhd \nu}.
\end{equation}
Now, for any $0\leq q \le s \le r$,   let $\mft_{[q, s]}$ denote the truncated path,
$$
 \stt(q) \to \stt(q+1) \to    \stt(q+2)\to   \dots \to  \stt(s-1)\to \stt(s). 
 $$
The representative $u^*_\mft$ of $m_\mft$  has the remarkable property that  for any $0\leq s \le r$,
\begin{equation} \label{eqn factorization of u t}
u^*_\mft =  u^*_{\mft_{[s, r]}}   u^*_{\mft_{[0, s]}},
\end{equation}
and 
\begin{equation}  \label{eqn property of u t trunacation}
 u^*_{\mft_{[0, s]}} = \c_{\mft(s)} d_{\mft_{[0, s]}} \in  \c_{\mft(s)} A_s \subseteq A_s^{\unrhd \mft(s)}.
 \end{equation}
 
  The compatibility relations \eqref{abstract branching compatibility for path} together with the factorizability 
\eqref{eqn factorization of u t}  
 of representatives $u^*_\stt$  of the Murphy basis play a crucial role in this paper.  They lead directly to our dominance triangularity results, which in turn lead to strong results about restriction of cell modules and allow the construction of skew cell modules.  In our view, these are the distinguishing properties of the Murphy bases of diagram algebras, and even in the original context of the Hecke algebras ~\cite{MR1327362} these properties provide  new insight. 
 \end{rmk}

\section{Seminormal basis and dominance triangularity}  \label{section seminormal}
 \subsection{Gelfand--Zeitlin idempotents} \label{GZ idempotents}
 Consider an increasing sequence of  algebras    $(A_r)_{r \ge 0}$ satisfying assumptions
 \eqref{diagram 1}--\eqref{diagram compatibility} of 
 \myref{Section}{subsection diagram algebras}. 
  Let us recall the following notion pertaining to the tower $(A_r^\FF)_{r \ge 0}$. The terminology is from   Okounkov and Vershik ~\cite{MR1443185,MR2050688}. 
 
 \begin{defn}  The {\em Gelfand--Zeitlin subalgebra} $G_r$  of $A_r^\FF$ is the subalgebra generated by the centers of $A_0^\FF, A_1^\FF, \dots, A_r^\FF$.  
 \end{defn}
 
The  Gelfand--Zeitlin subalgebra is a maximal abelian subalgebra of $A_r^\FF$ and contains a canonical family of idempotents indexed by paths on the branching diagram $\widehat A$.  
For each $s$ let $\{\z s \la \suchthat \la \in \widehat A_s\}$ denote the set of minimal central idempotents in $A_s^\FF$.
For $r \ge 1$ and $\mft$ a path on $\widehat  A$ of length $r$, let $F_\mft = \prod_{s = 1}^r  \z s {\mft(s)}$.   The elements $F_\mft$ for $\mft\in\Std_r$ are mutually orthogonal minimal idempotents whose sum is the identity;  moreover  $\sum_{\stt \in \Std_r(\la)}F_\mft = \z r \la$.  
 If $\mfs\in \Std_s$  and $\mft\in \Std_r$   with $s \le r$, then $F_\mfs F_\mft =  \delta_{\mfs, \mft_{[0, s]}} F_\mft$.   
 Evidently, the set  $\{F_\mft \mid \mft\in\Std_s, 0\leq s \leq r\}$  generates $G_r$.
    Let us call this set of idempotents   the family of {\sf Gelfand--Zeitlin idempotents} for $(A_r^\FF)_{r \ge 0}$.

\subsection{Seminormal bases}  \label{section SN}
Let $\{m_{\mfs \mft}^\la \suchthat  \la \in \widehat A_r \text{ and }  \mfs, \mft \in \Std_r(\la)\}$ be the Murphy cellular basis of $A_r$ constructed in 
\myref{Section}{subsection diagram algebras},
and for $\la \in \widehat A_r$,  let $\{m^\la_\mft \suchthat  \mft \in \Std_r(\la)\}$ be the Murphy basis of the cell module $\cell r {} \la$.    For  $\mft \in \Std_r$,  let $F_\mft$ be the corresponding Gelfand--Zeitlin idempotent in $A_r^\FF$. 

\begin{defn}[Seminormal bases] \label{defn seminormal}
 Let  $r \ge 0$  and $\la \in \widehat A_r$.    For $\mfs, \mft \in \Std_r(\la)$, define
$f^\la_\mft = m_\mft F_\mft \in \cell r \FF \la$   and $F_{\mfs \mft}^\la =  F_\mfs m_{\mfs \mft}^\la  F_\mft \in A_r^\FF$.  
\end{defn}

We now define two partial orders the sets of paths $\Std_{s, r}$ in the branching graph. 

\begin{defn}[Dominance order]  For $\mfs, \mft \in \Std_{s, r}$,   define $\mfs \unrhd \mft$ if $\mfs(j) \unrhd \mft(j)$ for all $0\leq j\leq r$.  We write $\mfs \rhd \mft$ if $\mfs \ne \mft$ and $\mfs \unrhd \mft$.
\end{defn}

This is evidently a partial order, which we call the dominance order on paths.
  In particular, the dominance order is defined on $\Std_r$ and on $\Std_r(\la)$ for $\la \in \widehat A_r$.

\begin{defn}[Reverse lexicographic order]   For $\mfs, \mft \in \Std_{s, r}$,   define $\mfs \succeq \mft$ if  $\mfs = \mft$ or if for the last index $j$ such that $\mfs(j) \ne \mft(j)$, we have
$\mfs(j) \rhd \mft(j)$.  We write  $\mfs \succ \mft$ if $\mfs \ne \mft$ and $\mfs \succeq \mft$.
\end{defn}

This is also a partial order on paths (and is defined on $\Std_r$ and on $\Std_r(\la)$ for $\la \in \widehat A_r$).    
 Evidently $\mfs \rhd \mft$ implies $\mfs \succ \mft$.

\begin{thm}[Dominance triangularity]  \label{dominance triangularity}
 Fix $\la \in \widehat A_r$.     For all $\mft \in \Std_r(\la)$,  there exist coefficients 
 $r_\mfs, r'_\mfs  \in \FF$ such that
$$
m^\la_\mft =  f^\la_\mft  +  \sum_{\begin{subarray}c \mfs \in \Std_r(\la) \\  \mfs \rhd \mft \end{subarray}}  r_\mfs f^\la_\mfs 
\qquad \quad 
f^\la_\mft =  m^\la_\mft  +  \sum_{\begin{subarray}c \mfs \in \Std_r(\la) \\  \mfs \rhd \mft \end{subarray}} r'_\mfs m^\la_\mfs. 
$$
In particular, $\{f^\la_\mft \suchthat \mft \in \Std_r(\la) \}$ is a basis of $\cell r \FF \la$.  
\end{thm}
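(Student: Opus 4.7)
My plan is to establish the key vanishing identity
\[
m^\la_\mft F_\mfs = 0 \quad \text{in } \cell r \FF \la \quad \text{whenever } \mfs \not\unrhd \mft,
\]
and then deduce both triangularity statements by downward induction on dominance.

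The key vanishing will come from the factorizability of the Murphy representatives in Remark~2.10. For each index $0 \le k \le r$ we have $u^*_\mft = u^*_{\mft_{[k,r]}}\, u^*_{\mft_{[0,k]}}$ with $u^*_{\mft_{[0,k]}} \in A_k^{\unrhd \mft(k)}$. Over $\FF$, split semisimplicity of $A_k^\FF$ (axiom (D3)) identifies the cell ideal with a sum of matrix blocks,
\[
A_k^{\unrhd \mft(k), \FF} \;=\; \bigoplus_{\nu \unrhd \mft(k)} A_k^\FF\, \z k \nu,
\]
so right-multiplication by $\z k \mu$ annihilates this ideal whenever $\mu \not\unrhd \mft(k)$. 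Since the Gelfand--Zeitlin idempotents pairwise commute, for any chosen $k$ I may reorder $F_\mfs$ as $\z k {\mfs(k)}$ times the remaining $\z j {\mfs(j)}$ and slot the factor $\z k {\mfs(k)}$ immediately to the right of $u^*_{\mft_{[0,k]}}$:
\[
u^*_\mft F_\mfs \;=\; u^*_{\mft_{[k,r]}} \bigl( u^*_{\mft_{[0,k]}}\, \z k {\mfs(k)} \bigr) \prod_{j \ne k} \z j {\mfs(j)}.
\]
If $\mfs(k) \not\unrhd \mft(k)$ for even a single index $k$, the middle factor vanishes and so does $u^*_\mft F_\mfs$; passing to the quotient $\cell r \FF \la$ yields the key vanishing.

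Next I would use maximality of the Gelfand--Zeitlin subalgebra $G_r$ in the split semisimple algebra $A_r^\FF$ (Section~3.1) to show that each eigenspace $L_\mfs := \cell r \FF \la \cdot F_\mfs$ has dimension exactly one for $\mfs \in \Std_r(\la)$. A maximal commutative subalgebra of a split semisimple algebra acts on every simple module with pairwise distinct one-dimensional characters, so $\dim L_\mfs \le 1$; combined with the decomposition $\cell r \FF \la = \bigoplus_{\mfs \in \Std_r(\la)} L_\mfs$ and the Murphy count $\dim \cell r \FF \la = |\Std_r(\la)|$, equality must hold throughout. The nonzero element $f^\la_\mfs$ therefore spans $L_\mfs$.

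Downward induction on the dominance order on $\Std_r(\la)$ completes the proof. The dominance-maximal case is immediate from the vanishing: $m^\la_\mft = \sum_{\mfs} m^\la_\mft F_\mfs = m^\la_\mft F_\mft = f^\la_\mft$. For the inductive step, assuming $f^\la_\mfv \in m^\la_\mfv + \Span\{m^\la_\mfw : \mfw \rhd \mfv\}$ for all $\mfv \rhd \mft$, the key vanishing gives
\[
m^\la_\mft \;=\; f^\la_\mft \;+\; \sum_{\mfs \rhd \mft} m^\la_\mft F_\mfs,
\]
and each $m^\la_\mft F_\mfs \in \FF\, f^\la_\mfs \subseteq \Span\{m^\la_\mfu : \mfu \unrhd \mfs \rhd \mft\}$ by the inductive hypothesis. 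Rearranging yields $f^\la_\mft \in m^\la_\mft + \Span\{m^\la_\mfu : \mfu \rhd \mft\}$, which is the first triangularity. This upper-unitriangular change of basis simultaneously shows $\{f^\la_\mft\}$ is a basis of $\cell r \FF \la$ and, by matrix inversion, yields the symmetric assertion $m^\la_\mft \in f^\la_\mft + \Span\{f^\la_\mfs : \mfs \rhd \mft\}$. The main difficulty is the vanishing step, and what makes it go through is precisely the factorizability of $u^*_\mft$ coming from axiom (D5) and Remark~2.10, which lets one localize the cell-ideal containment at every intermediate level $k$; once this is in hand, the rest is essentially linear algebra together with the standard structure theory of maximal abelian subalgebras of split semisimple algebras.
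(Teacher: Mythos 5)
Your proof is correct and follows essentially the same route as the paper: the heart of both arguments is that the factorization $u^*_\mft = u^*_{\mft_{[k,r]}}u^*_{\mft_{[0,k]}}$ with $u^*_{\mft_{[0,k]}}\in A_k^{\unrhd \mft(k)}$ forces $u^*_\mft F_\mfs$ to vanish unless $\mfs(k)\unrhd\mft(k)$ at every level, after which the resolution of the identity into the $F_\mfs$ and the one-dimensionality of the ranges $\cell r \FF \la F_\mfs$ give dominance unitriangularity. The only differences are presentational (you phrase the level-by-level step as a vanishing statement and organize the conclusion by downward induction on dominance, which incidentally handles the non-vanishing of the $f^\la_\mfs$ a little more carefully than the paper does).
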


\begin{proof}
The element $\sum_{\mu \unrhd \lambda}   {\z s \mu}$ acts as the identity on the ideal $  A_s^{\unrhd \lambda}$.  
For $\la \in \widehat A_r$  and $\mft \in \Std_r(\la)$,   $u^*_\mft$ has the property that $u^*_{\mft_{[0, s]}} \in A_s^{\unrhd \mft(s)}$ for all $0\leq s \le r$, and   $u^*_\mft = u^*_{\mft_{[s, r]}}   u^*_{\mft_{[0, s]}} $.  We therefore have that  
$$
u^*_\mft = u^*_\mft   \sum_{\mu(s) \unrhd \mft(s)} {\z s {\mu(s)}} .
$$
Applying this at each $0\leq s \le r$ gives
$$
u^*_\mft = u^*_\mft    \prod_{1 \le s \le r}\sum_{\mu(s) \unrhd \mft(s)} {\z s {\mu(s)}} 
=  u^*_\mft   \sum_{(\mu(1), \mu(2), \dots, \mu(r))}   \prod_{1 \le s \le r}  {\z s {\mu(s)}}
$$
where  the sum is over all sequences $(\mu(1), \mu(2), \dots, \mu(r))$  such that 
$\mu(s) \in \widehat A_s$ and $\mu(s) \unrhd \mft(s)$ for all $0\leq s \leq r$.    If such a sequence is not an element of $\Std_r$, i.e. is not
a path on $\widehat A$, then the product $\prod_{1 \le s \le r}  {\z s {\mu(s)}}$ is zero.  On the other hand, if 
$$
\mfs = \left(\mu(1) \to \mu(2) \to \cdots \to \mu(r)\right)
$$
is an element of $\Std_r$,  then $\prod_{1 \le s \le r}  {\z s {\mu(s)}} = F_\mfs$.    Thus
$$
\begin{aligned}
u^*_\mft &=   u^*_\mft   \sum _{\begin{subarray}c  \mfs \in \Std_r\\  \mfs \unrhd \mft \end{subarray}}
  F_\mfs    = u^*_\mft  F_\mft +   \sum_{\begin{subarray}c  \mfs \in \Std_r(\lambda)\\  \mfs \rhd \mft \end{subarray}}  u^*_\mft  F_\mfs  + y,
\end{aligned}
$$
where $y \in  (A_r^\FF)^{\rhd \la}$.    Passing to the cell module $\Delta^\FF_r(\la)$ we have
$$
m^\la_\mft =  f^\la_\mft  +  \sum_{\begin{subarray}c  \mfs \in \Std_r(\lambda)\\  \mfs \rhd \mft \end{subarray}}  
  m^\la_\mft  F_\mfs.
$$
But the range of $F_s$ acting on the simple module  $\Delta^\FF_r(\la)$ is of dimension 1, spanned by 
$f^\la_\mfs$, so this gives
$$
m^\la_\mft =  f^\la_\mft  +  \sum _
{\begin{subarray}c  \mfs \in \Std_r(\la)\\  \mfs \rhd \mft \end{subarray}}
r_\mfs f^\la_\mfs ,$$
for appropriate $r_\mfs \in \FF$.     This shows that the tuple $[m^\la_\mft]_{\mft \in \Std_r(\la)}$ is related to the tuple $[f^\la_\mft]_{\mft \in \Std_r(\la)}$ by an (invertible)  matrix which is unitriangular with respect to dominance order of paths.    Hence $\{f^\la_\mft \suchthat \mft \in \Std_r(\la)\}$ is a basis of $\cell r \FF \la$ and the inverse change of basis matrix is also unitriangular.  
\end{proof}

\begin{cor}  \label{corollary seminormal basis}  For $r \ge 0$, we have that 
\begin{enumerate}[label=(\arabic{*}), font=\normalfont, align=left, leftmargin=*]
\item
   $\{f^\la_\mft \suchthat \mft \in \Std_r(\la)\}$ is a basis of $\cell r \FF \la$ for all  $\la \in \widehat A_r$.
\item    $\{ F_{\mfs \mft}^\la \suchthat \la \in \widehat A_r \text{ and }  \mfs, \mft \in \widehat A_r\}$  is a cellular basis of $A_r^\FF$.  
\end{enumerate}
\end{cor}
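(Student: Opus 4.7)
Part (1) is essentially a restatement of the dominance-triangularity theorem just proved: that theorem produces an invertible unitriangular change of basis between $\{m^\la_\mft\}$ and $\{f^\la_\mft\}$, so the latter is a basis of $\cell r \FF \la$ whenever the former is.

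For part (2), my plan is to transport the statement across the bimodule isomorphism
\[
\alpha_\la : A_r^{\unrhd \la}/A_r^{\rhd \la} \longrightarrow (\cell r \FF \la)^* \otimes_\FF \cell r \FF \la .
\]
The first step is to verify that each Gelfand--Zeitlin idempotent $F_\mfs$ is fixed by $*$. This reduces to $*$-invariance of each minimal central idempotent $\z s \mu$, and here I would use the split semisimplicity hypothesis \eqref{diagram semisimplicity} together with cellularity: in the split semisimple case the cellular bilinear form identifies $\cell s \FF \mu$ with its $*$-dual, so $*$ preserves each Wedderburn component setwise and fixes the associated central idempotent. With $F_\mfs^* = F_\mfs$ and the left-action convention $a\cdot x^* = (x a^*)^*$ in hand, a direct computation gives
\[
\alpha_\la\bigl(F^\la_{\mfs \mft} + A_r^{\rhd \la}\bigr) = (f^\la_\mfs)^* \otimes f^\la_\mft .
\]

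Since part (1) establishes that $\{f^\la_\mft : \mft\in\Std_r(\la)\}$ is a basis of $\cell r \FF \la$, the elementary tensors on the right form a basis of $(\cell r \FF \la)^*\otimes \cell r \FF \la$, so $\{F^\la_{\mfs\mft} + A_r^{\rhd \la} : \mfs,\mft\in\Std_r(\la)\}$ is a basis of the subquotient $A_r^{\unrhd \la}/A_r^{\rhd \la}$. Running through a linear extension of $\unrhd$ on $\widehat A_r$ and using the cell-ideal filtration of $A_r^\FF$, these local bases assemble into an $\FF$-basis of $A_r^\FF$. The two cellular axioms then read off from $\alpha_\la$: involution invariance $(F^\la_{\mfs\mft})^* \equiv F^\la_{\mft \mfs} \mod A_r^{\rhd \la}$ is immediate from $F_\mfs^* = F_\mfs$ and the analogous property of the Murphy basis; and the right multiplication rule is inherited from the right action on the second tensor factor, with coefficients $r(a;\mft,\mfv)$ determined by expanding $f^\la_\mft\, a$ in the basis $\{f^\la_\mfv\}$ and depending only on $\mft$, not on $\mfs$, as required.

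The main obstacle I expect is the justification that $F_\mfs^* = F_\mfs$; once that is in place, the remainder is bookkeeping through $\alpha_\la$, and no further combinatorics on the branching graph is needed.
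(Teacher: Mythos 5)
Your proof is correct and follows the paper's route: part (1) is read off from the unitriangular change of basis in the dominance-triangularity theorem, and part (2) rests on the identification $\alpha_\la(F^\la_{\mfs\mft}+A_r^{\rhd\la}) = (f^\la_\mfs)^*\otimes f^\la_\mft$, which is exactly the paper's justification. The only difference is that the paper black-boxes the remaining bookkeeping (assembling the local bases and checking the cellular axioms) by citing Lemma~2.3 of Goodman--Graber, whereas you prove it inline, including the $*$-invariance of the Gelfand--Zeitlin idempotents, which the paper leaves implicit; your argument for that point via the $*$-invariant ideals $A_r^{\unrhd\mu}$, $A_r^{\rhd\mu}$ in the split semisimple setting is sound.
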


\begin{proof}  The first statement was verified in the proof of 
\myref{Theorem}{dominance triangularity}.
 The second statement follows from ~\cite[Lemma 2.3]{MR2774622}, since $F_{\mfs \mft}^\la$ is a lift of $\alpha_\la\inv(f^\la_\mfs \otimes_\FF f^\la_\mft)$.  
\end{proof}

We note that the cell module $\cell r \FF \la$ imbeds in the algebra $A_r^\FF$ as a right ideal:

\begin{lem} \label{lemma embedding of cell module in algebra}
 Let $r \ge 0$ and let $\la  \in \widehat A_r$.  \mbox{}
\begin{enumerate}[label=(\arabic{*}), font=\normalfont, align=left, leftmargin=*]
\item   $\Span_\FF\{ u^*_\mft F_\mft \suchthat \mft \in \Std_r(\la)\}$ 
 is a right ideal of $A_r^\FF$  and 
$f_\mft^\la \mapsto u^*_\mft F_\mft$ determines an isomorphism of $\cell r \FF \la$ onto this right ideal.
\item Likewise for any fixed  $\mfs \in \Std_r(\la)$,
$$
\cell r \FF \la \cong  \Span_\FF\{m^\la_{\mfs \mft} F_\mft \suchthat \mft \in \Std_r(\la)\}  
\cong   \Span_\FF\{F^\la_{\mfs \mft}  \suchthat \mft \in \Std_r(\la)\},
$$
with isomorphisms determined by $f_\mft^\la \mapsto m^\la_{\mfs \mft} F_\mft$,  respectively
$f_\mft^\la \mapsto F^\la_{\mfs \mft}$.
\end{enumerate}
\end{lem}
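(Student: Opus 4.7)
The plan is to realize each of the claimed spans as the image of a right $A_r^\FF$-module homomorphism out of $\cell r \FF \la$, and then to conclude via the simplicity of this module in the split semisimple algebra $A_r^\FF$. The key preparatory fact is that $(A_r^\FF)^{\rhd \la} F_\mft = 0$ for every $\mft \in \Std_r(\la)$. Indeed, $(A_r^\FF)^{\rhd \la}$ annihilates the cell module $\cell r \FF \la$, a standard consequence of \myref{Definition}{definition: cell module}, so under the Wedderburn decomposition afforded by \eqref{diagram semisimplicity} it lies in the sum of the simple components $A_r^\FF \z r \mu$ with $\mu \neq \la$; hence $(A_r^\FF)^{\rhd \la} \z r \la = 0$. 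Since $F_\mft = F_\mft \z r \la$ whenever $\mft(r) = \la$, the vanishing follows. Consequently, the map $\phi : A_r^\FF \to A_r^\FF$ defined by $x \mapsto x \z r \la$, which is a right $A_r^\FF$-module homomorphism by centrality of $\z r \la$, descends to a well-defined module map on $A_r^\FF/(A_r^\FF)^{\rhd \la}$, and in particular restricts to its submodule $\cell r \FF \la = (\c_\la A_r^\FF + (A_r^\FF)^{\rhd \la})/(A_r^\FF)^{\rhd \la}$ from \myref{Lemma}{properties of c lambda}.

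For part (1), using the representative $u_\mft^* = \c_\la d_\mft$ of $m_\mft^\la$ supplied by \eqref{abstract branching compatibility for path 2}, I compute $\phi(f_\mft^\la) = \phi(u_\mft^* F_\mft + (A_r^\FF)^{\rhd \la}) = u_\mft^* F_\mft \z r \la = u_\mft^* F_\mft$, so that $\phi(\cell r \FF \la) = \Span_\FF\{ u_\mft^* F_\mft \suchthat \mft \in \Std_r(\la) \}$. Since \eqref{diagram semisimplicity} makes $\cell r \FF \la$ simple as a right $A_r^\FF$-module, $\phi$ restricted to it is either zero or injective; it is not zero, for $\phi$ sends the class of $\c_\la$ to $\c_\la \z r \la$, which is nonzero (else $\c_\la \in (A_r^\FF)^{\rhd \la}$, forcing $\cell r \FF \la = 0$). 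Hence $\phi$ gives an $A_r^\FF$-module isomorphism of $\cell r \FF \la$ onto its image $\c_\la A_r^\FF \z r \la$, which is manifestly a right ideal.

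For part (2), I would compose $\phi$ with further right-module maps given by left multiplication. The assignment $\phi_\mfs(x) = d_\mfs^* \phi(x)$ satisfies $\phi_\mfs(f_\mft^\la) = d_\mfs^* u_\mft^* F_\mft = m_{\mfs\mft}^\la F_\mft$; similarly $\phi_\mfs'(x) = F_\mfs \phi_\mfs(x)$ yields $\phi_\mfs'(f_\mft^\la) = F_{\mfs\mft}^\la$. Simplicity of $\cell r \FF \la$ again implies that each map is either zero or an $A_r^\FF$-module isomorphism onto its image. Both are nonzero at $\mft = \mfs$, since $\phi_\mfs'(f_\mfs^\la) = F_{\mfs\mfs}^\la$ is an element of the seminormal cellular basis of $A_r^\FF$ from \myref{Corollary}{corollary seminormal basis}, and this in turn forces $\phi_\mfs(f_\mfs^\la) = m_{\mfs\mfs}^\la F_\mfs \neq 0$. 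The main technical hurdle is the initial vanishing $(A_r^\FF)^{\rhd \la} F_\mft = 0$; once this is established, the remainder is essentially formal.
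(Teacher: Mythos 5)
Your proposal is correct and follows essentially the same route as the paper: the crux in both arguments is that right multiplication by the central idempotent $\z r \la$ annihilates $(A_r^\FF)^{\rhd \la}$ while fixing each $F_\mft$, and part (2) is obtained in both by inserting $d_\mfs^*$, respectively $d_\mfs^* F_\mfs$, on the left. The only cosmetic difference is that the paper gets injectivity directly from the bimodule isomorphism $\alpha_\la$ (via the condition $\delta_\la x \ne 0$), whereas you invoke simplicity of $\cell r \FF \la$ together with a nonvanishing check; both are valid given axiom \eqref{diagram semisimplicity}.
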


\begin{proof}   Recall that $\delta_\la$ is a generator of the cell module $\cell r {\FF} \la$  and
$\c_\la$ is a lift  in $(A_r^\FF)^{\unrhd \la}$ of  $\alpha_\la\inv((\delta_\la)^* \otimes_\FF \delta_\la)$.    For any fixed $x \in A_r^\FF$ such that $\delta_\la x \ne 0$,  $$f^\la_\mft \mapsto   \alpha_\la\inv(x^* (\delta_\la)^* \otimes_\FF  f^\la_\mft ) =  x^* \c_\la d_\mft F_\mft +  (A_r^\FF)^{\rhd \la}$$ determines an isomorphism of 
$\cell k \FF \la$ onto a submodule of 
 $(A_r^\FF)^{\unrhd \la}/(A_r^\FF)^{\rhd \la}$.   Therefore, for $b \in A_r^\FF$,  if $f^\la_\mft b = \sum_\mfs \beta_\mfs f^\la_\mfs$,  then
 $$
 x^* \c_\la d_\mft F_\mft  b = \sum_\mfs \beta_\mfs  x^* \c_\la d_\mfs F_\mfs  + y,
 $$
 where $y \in (A_r^\FF)^{\rhd \la}$.    Now for all $\mfs \in \Std_r(\la)$,   $F_\mfs \z k \la = F_\mfs$,  but
 $y \z k \la = 0$.    Thus multiplying by $\z k \la$ on the right gives
 $$
 x^* \c_\la d_\mft F_\mft  b = \sum_\mfs \beta_\mfs  x^* \c_\la d_\mfs F_\mfs,
 $$
 which shows that $\Span_\FF\{x^* \c_\la d_\mft F_\mft \suchthat \mft \in \Std_r(\la)\}$ is a right ideal and
 $f_\mft \mapsto x^* \c_\la d_\mft F_\mft $ determines an isomorphism of $\cell k \FF \la$ onto this right ideal.  Taking $x = 1$ yields statement (1),   and taking $x = d_\mfs$,  respectively $x = d_\mfs F_\mfs$, gives the isomorphisms in part  (2).  
 \end{proof}
 
 \begin{lem}\label{seminormal form} \mbox{}   Let $r \ge 0$, $\la, \mu \in \widehat A_r$,  $\mfs, \mft \in \Std_r(\la)$,  and
 $\mfu, \mfv \in \Std_r(\mu)$.
\begin{enumerate}[label=(\arabic{*}), font=\normalfont, align=left, leftmargin=*]
 \item  $f^\la_\mft F^\mu_{\mfu \mfv} = \delta_{\la, \mu} \langle f^\la_\mft,  f^\la_\mfu \rangle  f^\la_\mfv$. 
 \item  $F^\la_{\mfs \mft} F^\mu_{\mfu \mfv} = \delta_{\la, \mu} \langle f^\la_\mft,  f^\la_\mfu \rangle  F^\la_{\mfs \mfv}$. 
 \item  $\langle f^\la_\mft,  f^\la_\mfu \rangle \ne 0$ if and only if $\mft = \mfu$. 
 \item ${\langle f^\la_\mft,  f^\la_\mft \rangle}\inv  F^\la_{\mft \mft} = F_\mft$.  
 \item  The set of elements $E^\la_{\mfs \mft} = {\langle f^\la_\mfs,  f^\la_\mfs \rangle}\inv  F^\la_{\mfs \mft} $ for $\la \in \widehat A_r$ and $\mfs, \mft \in \Std_r(\la)$  is a complete family of matrix units with
 $$
 E^\la_{\mfs \mft} E^\mu_{\mfu \mfv} = \delta_{\la, \mu} \delta_{\mft, \mfu}  E^\la_{\mfs \mfv}, \quad \text{and}
 \quad E^\la_{\mfs \mft} E^\la_{\mft \mfs}  = E^\la_{\mfs \mfs}  =  F_\mfs.
 $$
 \end{enumerate}
 \end{lem}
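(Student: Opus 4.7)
The plan is to prove the parts in the order (3), (4), (1), (2), (5), after establishing a single preliminary observation. The crucial preliminary is that every minimal central idempotent $\z s \mu$ of $A_s^\FF$ is fixed by the algebra involution; indeed, cellularity implies $*$ preserves each cell ideal $A_s^{\unrhd \mu}$, and hence permutes the primitive central idempotents of the semisimple algebra $A_s^\FF$.  The permutation is forced to be trivial by the dominance ordering:  if $(\z s \la)^* = \z s {\sigma(\la)}$ then both $\sigma(\la) \unrhd \la$ (since $\z s \la \in A_s^{\unrhd \la}$) and $\la \unrhd \sigma(\la)$ (by the same argument applied to $\sigma(\la)$), so $\sigma$ is the identity.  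Consequently $F_\mft^* = F_\mft$ for every Gelfand--Zeitlin idempotent $F_\mft = \prod_{s} \z s {\mft(s)}$.

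For (3), I would use the identity $\langle xa, y \rangle = \langle x, y a^* \rangle$ together with $F_\mft^* = F_\mft$, $f^\la_\mft F_\mft = f^\la_\mft$, and the eigenvalue relation $f^\la_\mfu F_\mft = \delta_{\mft, \mfu} f^\la_\mfu$ to obtain
\begin{align*}
\langle f^\la_\mft, f^\la_\mfu \rangle \ =\ \langle f^\la_\mft F_\mft, f^\la_\mfu \rangle \ =\ \langle f^\la_\mft, f^\la_\mfu F_\mft \rangle \ =\ \delta_{\mft, \mfu} \langle f^\la_\mft, f^\la_\mfu \rangle,
\end{align*}
which immediately yields the orthogonality claim.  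Non-vanishing of $\langle f^\la_\mft, f^\la_\mft \rangle$ follows from the non-degeneracy of the bilinear form on the simple $A_r^\FF$-module $\cell r \FF \la$, a standard consequence of semisimplicity (if the form were degenerate on $f^\la_\mft$ it would be identically zero by simplicity, contradicting the Wedderburn decomposition).

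Next, for (4), since $F_\mft$ is a minimal idempotent of $A_r^\FF$, the corner algebra $F_\mft A_r^\FF F_\mft$ is one-dimensional and spanned by $F_\mft$, so $F^\la_{\mft \mft} = c F_\mft$ for a unique scalar $c$.  Multiplying $f^\la_\mft$ on the right by $F^\la_{\mft\mft}$ and using the bilinear-form formula $x \cdot m^\la_{\mft\mft} = \langle x, m^\la_\mft \rangle m^\la_\mft$ identifies $c = \langle f^\la_\mft, m^\la_\mft \rangle$; expanding $m^\la_\mft$ via dominance triangularity (\myref{Theorem}{dominance triangularity}) and applying (3) collapses this to $c = \langle f^\la_\mft, f^\la_\mft \rangle$.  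Part (1) is then a direct computation starting from $f^\la_\mft F^\mu_{\mfu\mfv} = f^\la_\mft F_\mfu \, m^\mu_{\mfu\mfv} F_\mfv$:  the factor $F_\mfu$ forces $\mft = \mfu$ (and hence $\la = \mu$, accounting for both Kronecker deltas at once), and the bilinear form identity together with the same dominance-triangularity argument rewrites the resulting scalar as $\langle f^\la_\mft, f^\la_\mfu \rangle$.

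Finally, (2) is obtained by invoking \myref{Lemma}{lemma embedding of cell module in algebra}, which realizes $\cell r \FF \la$ as the right ideal $\Span_\FF\{F^\la_{\mfs\mft} \suchthat \mft \in \Std_r(\la)\}$ via $f^\la_\mft \mapsto F^\la_{\mfs\mft}$; transporting the identity in (1) along this isomorphism yields (2) without further work.  Statement (5) then follows by rescaling:  substituting $E^\la_{\mfs\mft} = \langle f^\la_\mfs, f^\la_\mfs \rangle^{-1} F^\la_{\mfs\mft}$ into (2) and (4) verifies the matrix unit relations.  The only genuinely nontrivial step is the preliminary observation $F_\mft^* = F_\mft$, since everything else is driven mechanically by the bilinear form identity, orthogonality of the Gelfand--Zeitlin idempotents, and dominance triangularity.
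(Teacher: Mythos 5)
Your proof is correct and follows essentially the same route as the paper's: orthogonality of the Gelfand--Zeitlin idempotents plus the identity $\langle xa,y\rangle=\langle x,ya^*\rangle$ for the orthogonality of the $f^\la_\mft$, the lift/bilinear-form property of $F^\la_{\mfu\mfv}$ together with dominance triangularity for parts (1)--(2), and minimality of $F_\mft$ for part (4). The only local divergences are that you make explicit the fact $F_\mft^*=F_\mft$ (which the paper uses silently) and that you derive $\langle f^\la_\mft,f^\la_\mft\rangle\ne 0$ from non-degeneracy of the form on the simple module $\cell r \FF \la$, whereas the paper instead notes that the identity of $A_r^\FF$ lies in the span of the $F^\mu_{\mfu\mfv}$ so that $f^\la_\mft$ would otherwise vanish; both are valid consequences of split semisimplicity.
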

 
 \begin{proof}  If $\la \ne \mu$, then  $f^\la_\mft F^\mu_{\mfu \mfv}  = f^\la_\mft  F_\mft  F_\mfu F^\mu_{\mfu \mfv}  = 0$, and similarly  $F^\la_{\mfs \mft} F^\mu_{\mfu \mfv} = 0$.    We have
 $f^\la_\mft F^\la_{\mfu \mfv} = \ \langle f^\la_\mft,  f^\la_\mfu \rangle  f^\la_\mfv$ by the definition of the bilinear form \eqref{defn of bilinear form}, since $F^\la_{\mfu \mfv}$ is a lift of 
 $\alpha_\la\inv( (f^\la_\mfu)^* \otimes f^\la_\mfv)$.   This proves part (1) and part (2) follows from
 \myref{Lemma}{lemma embedding of cell module in algebra}
 part (2).    If $\mft \ne \mfu$, then 
 $$
 \langle f^\la_\mft,  f^\la_\mfu \rangle = 
 \langle f^\la_\mft F_\mft,  f^\la_\mfu F_\mfu \rangle =  \langle f^\la_\mft F_\mft F_\mfu ,  f^\la_\mfu \rangle = 0.
 $$
 Suppose that  $\langle f^\la_\mft,  f^\la_\mft \rangle  = 0$ for some $\mft\in \Std_r(\la)$.  Then it follows from part (2) and the orthogonality of the elements $f^\la_\mfu$ which was just established, that
 $f^\la_\mft F^\mu_{\mfu \mfv} = 0$  for all $\mu$ and all $\mfu, \mfv \in \Std_r(\mu)$.   But by
 \myref{Corollary}{corollary seminormal basis},
 the identity element of $A_r^\FF$ is in the span of the set of  $F^\mu_{\mfu \mfv}$, so it follows that $f^\la_{\mft} = 0$, a contradiction.  This proves part (3).   By part (2),  $G_\mft = {\langle f^\la_\mft,  f^\la_\mft \rangle}\inv  F^\la_{\mft \mft}$ is an idempotent such that $G_\mft F_\mft = G_\mft$.  Since $F_\mft$ is a minimal idempotent, it follows that $G_\mft = F_\mft$, which proves part (4).   Part (5) follows from parts (2), (3),  and (4). 
 \end{proof}

\subsection{Restriction of the seminormal representations}
 For $0\leq s < r$ write
$A_r^\FF \cap  (A_s^\FF)'$ for the set of $x \in A_r^\FF$ that commute pointwise with $A_s^\FF$.
 
\begin{prop} \label{restriction seminormal}
 Let $1 \le s < r$.  Let $\nu \in \widehat A_r$  and $\mft \in \Std_r(\nu)$.  Write $\la = \mft(s)$,  $\mft_1 = \mft_{[0, s]} \in \Std_s(\la)$ , and $\mft_2 = \mft_{[s, r]} \in \Std_{s,r}(\nu \setminus \la)$.   
\begin{enumerate}[label=(\arabic{*}), font=\normalfont, align=left, leftmargin=*]
\item  Let $x \in A_s^\FF$  and suppose $f^\la_{\mft_1} x = \sum_{\mfs\in \Std_s(\la)} \alpha_\mfs f^\la_\mfs$.    We have that 
$$f^\nu_\mft x = \sum_{\mfs\in \Std_s(\la)} \alpha_\mfs  f^\nu_{\mfs \circ \mft_2}.$$ 
\item In particular,  for $\mu \in \widehat A_s$ and $\mfu, \mfv \in \Std_s(\mu)$,   
$$f_\mft^\nu  F^\mu_{\mfu \mfv} =  \delta_{\mu, \la} \delta_{\mft_1, \mfu}  \langle f^\la_{\mft_1},  f^\la_{\mft_1} \rangle 
f^\nu_{\mfv\circ \mft_2}.$$
\item  For $x \in A_r^\FF \cap (A_s^\FF)'$, 
$
f^\nu_\mft x = \sum_\mfs  r_\mfs f^\nu_{\mft_1 \circ s},
$
where the sum is over $\mfs \in \Std_{s,r}(\nu \setminus \la)$, and the coefficients depend only on $x$ and $\mft_2$, and are independent of $\mft_1$.    
\end{enumerate}
\end{prop}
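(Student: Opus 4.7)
My plan is to establish (1) first, since (2) is then an immediate special case, and (3) follows from (1) and (2) by exploiting that $x$ commutes with matrix units of $A_s^\FF$.

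For (1), I would use the embedding of the cell module as a right ideal given by \myref{Lemma}{lemma embedding of cell module in algebra}: the basis element $f^\nu_\stw$ of $\cell r \FF \nu$ corresponds to $u^*_\stw F_\stw$ inside $A_r^\FF$, and similarly $f^\la_\mfs$ corresponds to $u^*_\mfs F_\mfs$ in $A_s^\FF$. Writing $\mft = \mft_1 \circ \mft_2$, the factorization \eqref{eqn factorization of u t} gives $u^*_\mft = u^*_{\mft_2}\, u^*_{\mft_1}$, while the definition of the Gelfand--Zeitlin idempotent factors as $F_\mft = F_{\mft_1}\, Z$, where $Z = \prod_{j=s+1}^r \z{j}{\mft(j)}$. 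The key observation is that $Z$ is a product of central idempotents of the algebras $A_j^\FF$ for $j > s$, and since $A_s^\FF \subseteq A_j^\FF$ unitally, $Z$ commutes pointwise with $A_s^\FF$. Hence, for any $x \in A_s^\FF$,
\[ u^*_\mft F_\mft\, x \;=\; u^*_{\mft_2}\, \bigl(u^*_{\mft_1} F_{\mft_1}\, x\bigr)\, Z. \]
The inner factor $u^*_{\mft_1} F_{\mft_1}\, x$ is the image of $f^\la_{\mft_1}\, x$ under the level-$s$ embedding, so the hypothesis $f^\la_{\mft_1} x = \sum_\mfs \alpha_\mfs f^\la_\mfs$ translates to $u^*_{\mft_1} F_{\mft_1}\, x = \sum_\mfs \alpha_\mfs\, u^*_\mfs F_\mfs$ in $A_s^\FF$. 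Substituting back and noting that $u^*_{\mft_2}\, u^*_\mfs = u^*_{\mfs \circ \mft_2}$ and $F_\mfs\, Z = F_{\mfs\circ\mft_2}$ (which are just the same two factorizations read in reverse, since $(\mfs \circ \mft_2)_{[0,s]} = \mfs$ and $(\mfs \circ \mft_2)_{[s,r]} = \mft_2$) yields $u^*_\mft F_\mft\, x = \sum_\mfs \alpha_\mfs\, u^*_{\mfs\circ\mft_2} F_{\mfs\circ\mft_2}$, and pulling back through the level-$r$ embedding delivers (1). Statement (2) is then immediate by setting $x = F^\mu_{\mfu\mfv}$ in (1) and evaluating $f^\la_{\mft_1} F^\mu_{\mfu\mfv} = \delta_{\mu,\la}\delta_{\mft_1,\mfu}\langle f^\la_{\mft_1}, f^\la_{\mft_1} \rangle f^\la_\mfv$ via \myref{Lemma}{seminormal form}(1) and (3).

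For (3), the first observation, from (2), is that $f^\nu_\mft E^\la_{\mft_1\mft_1} = f^\nu_\mft$, where $E^\la_{\mft_1\mft_1}$ is the diagonal matrix unit of \myref{Lemma}{seminormal form}(5). Because $x$ commutes with $E^\la_{\mft_1\mft_1} \in A_s^\FF$, one has $f^\nu_\mft\, x = f^\nu_\mft\, x\, E^\la_{\mft_1\mft_1}$. Expanding $f^\nu_\mft\, x$ in the seminormal basis $\{f^\nu_\stw\}$ and applying (2) to each $f^\nu_\stw E^\la_{\mft_1\mft_1}$, every term vanishes unless $\stw_{[0,s]} = \mft_1$, yielding the claimed support on paths $\mft_1 \circ \mfs$ with $\mfs \in \Std_{s,r}(\nu \setminus \la)$. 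To show that the coefficients depend only on $x$ and $\mft_2$, I would run the same argument for any other $\mft_1' \in \Std_s(\la)$, using instead the off-diagonal matrix unit $E^\la_{\mft_1\mft_1'} \in A_s^\FF$: it commutes with $x$, and by part (1) applied at level $s$ it sends $f^\nu_\mft$ to $f^\nu_{\mft_1'\circ \mft_2}$ and each $f^\nu_{\mft_1\circ\mfs}$ to $f^\nu_{\mft_1'\circ \mfs}$. The identity $f^\nu_{\mft_1'\circ\mft_2}\, x = f^\nu_\mft\, E^\la_{\mft_1\mft_1'}\, x = f^\nu_\mft\, x\, E^\la_{\mft_1\mft_1'}$ then transports the list of coefficients unchanged from $\{f^\nu_{\mft_1\circ\mfs}\}$ to $\{f^\nu_{\mft_1'\circ\mfs}\}$.

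The only real obstacle, concentrated entirely in part (1), is the careful bookkeeping of the two compatible factorizations of $u^*_\mft$ and $F_\mft$ and the observation that $Z$ commutes with $A_s^\FF$. Everything else is direct application of the structural results already established in \myref{Lemma}{lemma embedding of cell module in algebra} and \myref{Lemma}{seminormal form}.
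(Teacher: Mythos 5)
Your argument is correct and follows essentially the same route as the paper: part (1) via the right-ideal embedding of \myref{Lemma}{lemma embedding of cell module in algebra} together with the factorizations $u^*_\mft = u^*_{\mft_2} u^*_{\mft_1}$ and $F_\mft = F_{\mft_1} F_{\mft_2}$ (your $Z$ is exactly the paper's $F_{\mft_2} \in A_r^\FF \cap (A_s^\FF)'$), and part (3) via projecting with $F_{\mft_1}$ and transporting coefficients with the matrix units $E^\la_{\mft_1 \mfv}$. No gaps.
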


\begin{proof}   We can embed the cell module $\cell r \FF \nu$ in $A_r^\FF$,  identifying $f^\nu_\mft$ with
$u^*_\mft F_\mft$, using
\myref{Lemma}{lemma embedding of cell module in algebra}.
  We can write $F_\mft = F_{\mft_1} F_{\mft_2}$,  where 
$F_{\mft_2} \in  A_r^\FF \cap (A_s^\FF)'$.    Thus
$$
f^\nu_\mft  x=  u^*_\mft F_\mft x = u^*_{\mft_2} F_{\mft_2}  u^*_{\mft_1} F_{\mft_1} x.
$$
Applying 
\myref{Lemma}{lemma embedding of cell module in algebra}
 part (1) to  $\cell s \FF \la$ we find that this equals
$$
u^*_{\mft_2} F_{\mft_2}  \sum_\mfs \alpha_\mfs  u^*_\mfs F_\mfs 
=
 \sum_\mfs \alpha_\mfs   u^*_{\mft_2} u^*_{\mfs}  F_{\mfs}  F_{  \mft_2} 
= \sum_\mfs \alpha_\mfs  u^*_{\mfs\circ \mft_2} F_{\mfs\circ \mft_2} 
 =   \sum_\mfs  \alpha_\mfs f^\nu_{\mfs \circ \mft_2}.
$$
This proves part (1) and part (2) is an immediate consequence. 
 We now consider part $(3)$. Given  
$x \in A_r^\FF \cap (A_s^\FF)'$,  we let 
$
f^\nu_\mft x = \sum_{\mfu\in \Std_r(\nu)}  r_\mfu f^\nu_\mfu$. 
Since $x$ commutes with $A^\FF_s$, 
$$
f^\nu_\mft x = f^\nu_\mft F_{\mft_1} x = f^\nu_\mft x  F_{\mft_1} = \sum_{\mfu}  r_\mfu f^\nu_\mfu  F_{\mft_1},
$$
which shows that $r_\mfu = 0$ unless $\mfu_{[0,s]}= \mft_1$.  Thus, we can rewrite this as
$f_\mft x = \sum_\mfs  r_\mfs  f_{\mft_1 \circ s},$ where now the sum is over $\mfs \in \Std_{s,r}(\nu \setminus \la)$.   It remains to show that the coefficients are independent of $\mft_1$.   If $\mfv \in \Std_s(\la)$, then
$$
\begin{aligned}
f^\nu_{\mfv \circ \mft_2} x &
=  f^\nu_{\mft_1\circ \mft_2} E^\la_{\mft_1 \mfv} x 
=  f^\nu_\mft E^\la_{\mft_1 \mfv} x =  f^\nu_\mft x E^\la_{\mft_1 \mfv}  
 =  \sum_\mfs r_\mfs  f^\nu_{\mft_1 \circ s}   E^\la_{\mft_1 \mfv}  
=   \sum_\mfs r_\mfs f^\nu_{\mfv\circ \mfs},
\end{aligned}
$$
where we have  applied  part (2).  
\end{proof}

\subsection{Dominance triangularity and  restriction rules for the Murphy basis}
For $0\leq s < r$, we now show that the Murphy   basis is compatible with restriction to the subalgebra 
$A _s \subseteq A_r$, and also to the subalgebra $A_r \cap A_s'$.  
 This is a first  step towards constructing   skew cell modules in the next section.  

\begin{lem}  Let $1 \le s < r$,  $\nu \in \widehat A_r$,  $\la  \in \widehat A_{s}$ and  $\mft  \in \Std_{s,r}(\nu \setminus \la )$.    Suppose that $y \in  \c_\la  A_{s} \cap  A_{s}^{\rhd \la }$.  Then there exist coefficients $r_\mathsf{z}  \in R$ such that 
$$
u^*_{\mft} y  \equiv \ \ \   
\sum_{\begin{subarray}c   \mathsf z \in \Std_r(\nu) \\   \mathsf z_{[s, r]} \rhd \mft \\  \mathsf z(s) \rhd \la   \end{subarray}}
  r_{\mathsf z}  u^*_{\mathsf z}
  \quad \mod A_r^{\rhd \nu}.
$$
\end{lem}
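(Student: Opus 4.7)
The plan is to pass to the cell module $\cell r {} \nu$, identify the image of $u^*_\mft y$ modulo $A_r^{\rhd \nu}$ with the right action of $a$ (where $y = \c_\la a$) on a single Murphy basis element, and then exploit dominance triangularity in the seminormal basis to pin down the support.

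Write $y = \c_\la a$ for some $a \in A_s$.  By the compatibility relation \eqref{abstract branching compatibility for path}, $u^*_\mft y = u^*_\mft \c_\la a = \c_\nu d_\mft a \in A_r^{\unrhd \nu}$.  Let $\mft^\top \in \Std_s(\la)$ denote the dominance-maximal path, for which $d_{\mft^\top} = 1$ and hence $u^*_{\mft^\top} = \c_\la$.  Then $u^*_{\mft^\top \circ \mft} = u^*_\mft u^*_{\mft^\top} = \c_\nu d_\mft$, so modulo $A_r^{\rhd \nu}$ the element $\c_\nu d_\mft$ is identified with the single Murphy basis element $m^\nu_{\mft^\top \circ \mft}$ of $\cell r {} \nu$, and $u^*_\mft y \equiv m^\nu_{\mft^\top \circ \mft} \cdot a \pmod{A_r^{\rhd \nu}}$.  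Extending scalars to $\FF$ and applying \myref{Theorem}{dominance triangularity}, the seminormal expansion of $m^\nu_{\mft^\top \circ \mft}$ is supported on paths $\mathsf u \unrhd \mft^\top \circ \mft$ componentwise; in particular $\mathsf u(s) \unrhd \la$, $\mathsf u_{[s, r]} \unrhd \mft$, and if $\mathsf u(s) = \la$ then $\mathsf u_{[0, s]} = \mft^\top$ by the maximality of $\mft^\top$ in $\Std_s(\la)$.

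Now expand $a = \sum_{\mu, \mfu, \mfv} a^\mu_{\mfu \mfv} F^\mu_{\mfu \mfv}$ in the seminormal basis of $A_s^\FF$ and apply \myref{Proposition}{restriction seminormal}(2) term by term: $f^\nu_\mathsf u F^\mu_{\mfu \mfv}$ is nonzero only when $\mathsf u(s) = \mu$ and $\mathsf u_{[0, s]} = \mfu$, and then yields a nonzero scalar times $f^\nu_{\mfv \circ \mathsf u_{[s, r]}}$.  The hypothesis $y \in A_s^{\rhd \la}$ translates to $\delta(\la) \cdot a = 0$ in $\cell s {} \la$; since $\delta(\la) = m^\la_{\mft^\top} = f^\la_{\mft^\top}$ (again by dominance triangularity at level $s$), \myref{Lemma}{seminormal form}(1) forces $a^\la_{\mft^\top, \mfv} = 0$ for every $\mfv \in \Std_s(\la)$.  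In the expansion of $m^\nu_{\mft^\top \circ \mft} \cdot a$, a contribution at $\mu = \la$ requires $\mathsf u(s) = \la$ and hence $\mathsf u_{[0, s]} = \mft^\top = \mfu$, which is then annihilated by $a^\la_{\mft^\top, \mfv} = 0$.  Thus only $\mu \rhd \la$ contributes, and the seminormal support of $m^\nu_{\mft^\top \circ \mft} \cdot a$ lies in $\{f^\nu_\mathsf z : \mathsf z(s) \rhd \la,\ \mathsf z_{[s, r]} \unrhd \mft\}$.

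Finally, converting back to the Murphy basis via the inverse of \myref{Theorem}{dominance triangularity}, each $f^\nu_\mathsf z$ expands as $m^\nu_\mathsf z$ plus $m^\nu_{\mathsf z'}$ with $\mathsf z' \rhd \mathsf z$.  Both support conditions $\mathsf z(s) \rhd \la$ and $\mathsf z_{[s, r]} \unrhd \mft$ are upward closed in the dominance order on $\Std_r(\nu)$, so they are preserved; the strict inequality $\mathsf z(s) \rhd \la = \mft(s)$ automatically upgrades $\mathsf z_{[s, r]} \unrhd \mft$ to $\mathsf z_{[s, r]} \rhd \mft$.  Integrality $r_\mathsf z \in R$ follows because $u^*_\mft y \in A_r$ and the Murphy basis is an $R$-basis of $A_r$.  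The main obstacle is the strengthening from the reverse-lexicographic order---which the iterated $A_s$-filtration of \myref{Theorem}{theorem abstract Murphy basis}(3) would directly deliver---to the dominance order required here; this strengthening is precisely what the dominance triangularity of the seminormal basis, applied to the single Murphy basis element $m^\nu_{\mft^\top \circ \mft}$, provides.
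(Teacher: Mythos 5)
Your overall strategy---push $u^*_\mft y$ into $\c_\nu A_r \subseteq A_r^{\unrhd\nu}$ via the compatibility relation, control the seminormal support, then convert back to the Murphy basis by \myref{Theorem}{dominance triangularity} and descend to $R$ by integrality---is the same as the paper's, and your last two steps (upgrading $\unrhd$ to $\rhd$ because the paths already differ at level $s$, and the descent to $R$) are exactly right. The gap is at the outset: you assert that the dominance-maximal path $\mft^\top\in\Std_s(\la)$ satisfies $d_{\mft^\top}=1$, hence $u^*_{\mft^\top}=\c_\la$ and $\delta(\la)=m^\la_{\mft^\top}=f^\la_{\mft^\top}$. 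Nothing in axioms \eqref{diagram 1}--\eqref{diagram compatibility} gives this. The branching factors are constrained only by the cell-filtration property and \eqref{abstract branching compatibility}; rescaling them by units of $R$ preserves all axioms, so $d_{\mft^\top}$ need not act as the identity on $\delta(\la)$. (The identity $d_{\stt^\la}=T_{w(\stt^\la)}=1$ is a feature of the explicit Hecke-algebra factors in \myref{Proposition}{prop branching factors Hecke}, not of the abstract framework.) This premise is load-bearing three times over: in the identification $u^*_\mft y\equiv m^\nu_{\mft^\top\circ\mft}\cdot a$, in the claim that a seminormal support path $\mathsf u$ with $\mathsf u(s)=\la$ must have $\mathsf u_{[0,s]}=\mft^\top$, and in deducing $a^\la_{\mft^\top\mfv}=0$ from $\delta(\la)a=0$. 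Without it, $\c_\nu d_\mft+A_r^{\rhd\nu}=\delta(\nu)d_\mft$ (note $d_\mft$ is only a partial product over $[s,r]$) is some element of the cell module whose Murphy expansion is not known a priori, so \myref{Theorem}{dominance triangularity} cannot be applied to it, and the $\mu=\la$ contributions in your matrix-unit expansion of $a$ are no longer visibly killed.

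The paper's proof avoids expressing $u^*_\mft\c_\la$ in the Murphy basis altogether. It uses two memberships directly: $u^*_{\mft_{[s,j]}}y\in\c_{\mft(j)}A_j\subseteq A_j^{\unrhd\mft(j)}$ for $s<j\le r$ (from $y\in\c_\la A_s$ and \eqref{abstract branching compatibility for path}), and $y\in A_s^{\rhd\la}$ at level $s$. Multiplying $u^*_\mft y$ on the right by $\sum_{\mu\unrhd\mft(j)}\z{j}{\mu}$ for $j>s$, by $\sum_{\mu\rhd\la}\z{s}{\mu}$ at $j=s$ (this is where the strict inequality $\mathsf z(s)\rhd\la$ enters, replacing your vanishing-coefficient argument), and by $1=\sum_{\mu}\z{j}{\mu}$ for $j<s$, and expanding the product of these sums as in the proof of \myref{Theorem}{dominance triangularity}, one lands directly on $\sum_{\mathsf z}u^*_\mft yF_{\mathsf z}$ with $\mathsf z(s)\rhd\la$ and $\mathsf z_{[s,r]}\rhd\mft$; the one-dimensionality of $F_{\mathsf z}\Delta_r^\FF(\nu)$ and dominance triangularity then finish. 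Your argument could likely be repaired by dropping $\mft^\top$, writing $\c_\la\equiv\sum_{\mfu,\mfv}c_\mfu c_\mfv F^\la_{\mfu\mfv}$ modulo $(A_s^\FF)^{\rhd\la}$ with $\delta(\la)=\sum_\mfu c_\mfu f^\la_\mfu$, and showing the level-$\la$ contributions cancel in aggregate using $\delta(\la)a=0$---but as written it does not go through in the stated generality.
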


\begin{proof}   Recall that the element 
$\sum_{\la (s) \rhd \la }\z {s} {\la (s)}$ acts as the identity on the ideal $  A_{s}^{\rhd \la }$  and that  $y \in A_{s}^{\rhd \la }$.  
By assumption  $y \in \c_\la  A_{s}$ and $\stt(s)=\la$,  
we have    
  $$u^*_{\mft_{[s, j]}} y \in \c_{\mft(j)} A_j \subseteq A_j^{\unrhd \mft(j)}$$
for each $j$ with $s < j \le r$.  In fact,  if $y = \c_\la  x$,   then  $u^*_{\mft_{[s, j]}} y = u^*_{\mft_{[s, j]}} \c_\la x = \c_{\mft(j)} d_{\mft_{[s, j]}} x$, using \eqref{abstract branching compatibility for path}.
 It follows that $u^*_\mft y = u^*_\mft y  (\sum_{\la (j) \unrhd \mft(j)} \z j {\la (j)})$. 
   Finally, for 
$j < s$,  we can write $1 =  \sum_{\la (j) \in \widehat A_j}  \z j {\la (j)}$.   Arguing as in the proof of 
\myref{Theorem}{dominance triangularity},
 we have that 
$$
\begin{aligned}
u^*_\mft y   =  
\sum_{\begin{subarray}c  \mathsf z \in \Std_r \\ 
\mathsf z_{[s, r]} \rhd \mft\\ 
\mathsf z(s) \rhd \la   \end{subarray}}  
  u^*_\mft y F_{\mathsf z}   
 \equiv
\sum_{\begin{subarray}c  \mathsf z \in \Std_r (\nu)\\ 
\mathsf z_{[s, r]} \rhd \mft\\ 
\mathsf z(s) \rhd \la   \end{subarray}} 
u^*_\mft y F_{\mathsf z}   \quad \mod (A_r^\FF)^{\rhd \nu}.
\end{aligned}
$$
  Now, because the range of $F_{\mathsf z} $ on the cell module $\Delta_r^\FF(\nu)$ is 
$\FF\{ f_{\mathsf z}\} = \FF \{ u^*_{\mathsf z}  F_{\mathsf z}\}$, we have
$u^*_\mft y F_{\mathsf z}  =  \alpha_{\mathsf z}  u^*_{\mathsf z}  F_{\mathsf z} $ for some $  \alpha_{\mathsf z}  \in \FF$.  Thus
$$
u^*_\mft y  \equiv 
\sum_
{ \begin{subarray}c
  \mathsf z \in \Std_r(\nu)  \\
\mathsf z_{[s, r]} \rhd \mft \\
\mathsf z(s) \rhd \la  \end{subarray} }
 \alpha_{\mathsf z}  u^*_{\mathsf z} F_{\mathsf z}  \quad \mod (A_r^\FF)^{\rhd \nu}.
$$ 
By dominance triangularity 
(\myref{Theorem}{dominance triangularity})
we obtain 
$$
u^*_\mft y  = 
\sum 
 _{\begin{subarray}c
  \mathsf z \in \Std_r(\nu)  \\ 
\mathsf z_{[s, r]} \rhd \mft, \\
\mathsf z(s) \rhd \la   \end{subarray}} 
 r_{\mathsf z}  u^*_{\mathsf z}    
    +  y',
$$
with coefficients  $r_{\sf z} \in \FF$, and with $y' \in (A_r^\FF)^{\rhd \nu}$.    But since 
 $u^*_\mft y \in A_r^R$,  we have that $r_{\sf z} \in R$ and $y' \in A_r^{\rhd \nu}$. 
 \end{proof}
 
 The following is an immediate consequence of the lemma:
 
 \begin{prop}\label{jfgkdhkhjdfgdjhfgk} Let $1 \le s < r$,  $\nu \in \widehat A_r$,  $\la  \in \widehat A_{s}$ and  $\mft  \in \Std_{s,r}(\nu \setminus \la )$.   Let $x \in \c_\la  A_{s}$ and write
 $$
 x = \sum_{\mathsf s \in \Std_{s}(\la )}  \alpha_{\mathsf s}  u^*_{\mathsf s}  + y,
 $$
 with $y \in A_{s}^{\rhd \la }$.    Then there exist coefficients $r_{\mathsf z} \in R$, such that 
 $$
 u^*_\mft x \equiv \sum_{\mathsf s \in \Std_{s}(\la )}  \alpha_{\mathsf s}  u^*_\mft u^*_{\mathsf s}  + 
 \sum_{\begin{subarray}c
  \mathsf z \in \Std_r(\nu)  \\ 
\mathsf z_{[s, r]} \rhd \mft, \\
\mathsf z(s) \rhd \la   \end{subarray}}  
 r_{\mathsf z}  u^*_{\mathsf z}   \quad \mod A_r^{\rhd \nu}.
 $$
% where the sum is over $\mathsf z \in \Std_r(\nu)$ such that $\mathsf z_{[s, r]} \rhd \mft$ and
% $\mathsf z(s) \rhd \la $.  
 \end{prop}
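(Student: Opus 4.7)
The plan is to substitute the given decomposition of $x$ into $u^*_\mft x$, isolate the ``easy'' terms as the first sum appearing in the conclusion, and invoke the preceding lemma on the residual. Concretely, expand
$$
u^*_\mft x \;=\; \sum_{\mathsf s \in \Std_s(\la)} \alpha_{\mathsf s}\, u^*_\mft u^*_{\mathsf s} \;+\; u^*_\mft y.
$$
The first sum already matches the first sum in the claimed congruence, so only $u^*_\mft y$ needs to be rewritten modulo $A_r^{\rhd \nu}$.

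To invoke the preceding lemma on $u^*_\mft y$, I need to verify $y \in \c_\la A_s \cap A_s^{\rhd \la}$. Membership in $A_s^{\rhd \la}$ is given. For membership in $\c_\la A_s$, observe that equation \eqref{abstract branching compatibility for path 2}, applied inside the algebra $A_s$ with $\c_\varnothing = 1$, gives $u^*_{\mathsf s} = \c_\la d_{\mathsf s} \in \c_\la A_s$ for every $\mathsf s \in \Std_s(\la)$. Since $x \in \c_\la A_s$ by hypothesis and $\c_\la A_s$ is an $R$--submodule, $y = x - \sum_{\mathsf s} \alpha_{\mathsf s} u^*_{\mathsf s}$ lies in $\c_\la A_s$ as well.

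Applying the previous lemma to $u^*_\mft y$ now yields coefficients $r_{\mathsf z} \in R$ indexed by paths $\mathsf z \in \Std_r(\nu)$ satisfying $\mathsf z_{[s,r]} \rhd \mft$ and $\mathsf z(s) \rhd \la$, with
$$
u^*_\mft y \;\equiv\; \sum_{\mathsf z} r_{\mathsf z}\, u^*_{\mathsf z} \pmod{A_r^{\rhd \nu}}.
$$
Substituting this back into the expansion of $u^*_\mft x$ gives the desired congruence. There is essentially no real obstacle here: the only piece of bookkeeping that requires a moment's thought is the verification that $y \in \c_\la A_s$, which is precisely where the special form of the decomposition of $x$ (in terms of the $u^*_{\mathsf s}$ rather than an arbitrary basis of $\c_\la A_s / A_s^{\rhd \la}$) is used.
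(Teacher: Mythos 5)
Your proof is correct and follows exactly the route the paper intends: the paper states the proposition as ``an immediate consequence of the lemma'' and gives no further argument, while you supply the two details that make it immediate, namely the expansion of $u^*_\mft x$ and the check via \eqref{abstract branching compatibility for path 2} that $y$ lies in $\c_\la A_s \cap A_s^{\rhd\la}$ so the lemma applies.
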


The following is a restriction rule for the Murphy type basis of a tower of diagram algebras.

\begin{cor}  \label{restriction rule for Murphy basis}
Let $r > 1$,  $\nu\in\widehat{A}_r$, and   $\mft \in \Std_r(\nu)$.
Let $1 \le s < r$ and write $\la  = \mft(s)$. 
Let $a \in A_{s}$ and suppose
 $$
 u^*_{\mft_{[0, s]}} a \equiv \sum_{\mfs \in \Std_{s}(\la )}  r_\sts u^*_\mfs  \mod A_{s}^{\rhd \la }.
 $$
 Then  there exist coefficients $r_{\mathsf z} \in R$ such that 
 $$
 u^*_\mft a \equiv   \sum_{\mfs \in \Std_{s}(\la )}  r_\sts  u^*_{\mft_{[s, r]}}u^*_\mfs   + 
 \sum _{\begin{subarray}c
  \mathsf z \in \Std_r(\nu)  \\ 
\mathsf z_{[s, r]} \rhd \mft, \\
\mathsf z(s) \rhd \la   \end{subarray}}   r_{\mathsf z}  u^*_{\mathsf z}  \quad \mod A_r^{\rhd \nu}.
 $$
%  where the sum is over $\mathsf z \in \Std_r(\nu)$ such that $\mathsf z_{[s, r]} \rhd \mft_{[s, r]}$ and
% $\mathsf z(s) \rhd \la $.  
 \end{cor}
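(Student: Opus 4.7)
The strategy is to read the hypothesis as giving an expansion of the element $x := u^*_{\mft_{[0,s]}} a$ inside $\c_\la A_s$, and then to apply the preceding Proposition \ref{jfgkdhkhjdfgdjhfgk} to the partial path $\mft_{[s,r]} \in \Std_{s,r}(\nu \setminus \la)$ with this $x$. The bridge between the two halves of the statement is the factorization \eqref{eqn factorization of u t} of the $u$-representatives along a truncation.

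First I would verify that $x = u^*_{\mft_{[0,s]}} a$ lies in $\c_\la A_s$: by \eqref{eqn property of u t trunacation} we have $u^*_{\mft_{[0,s]}} = \c_{\mft(s)}\, d_{\mft_{[0,s]}} = \c_\la\, d_{\mft_{[0,s]}} \in \c_\la A_s$, and since $a \in A_s$ it follows that $x \in \c_\la A_s$. The hypothesis then provides an expansion
$$
x \;=\; \sum_{\mfs \in \Std_s(\la)} r_\mfs u^*_\mfs \;+\; y, \qquad y \in A_s^{\rhd \la},
$$
which is exactly the form required as input to Proposition \ref{jfgkdhkhjdfgdjhfgk} (with $\alpha_\mfs = r_\mfs$).

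Next, I would apply Proposition \ref{jfgkdhkhjdfgdjhfgk} with the path $\mft_{[s,r]}$ in place of $\mft$ and with the above $x$. This yields coefficients $r_{\mathsf z} \in R$ such that
$$
u^*_{\mft_{[s,r]}}\, x \;\equiv\; \sum_{\mfs \in \Std_s(\la)} r_\mfs\, u^*_{\mft_{[s,r]}} u^*_\mfs \;+\; \sum_{\substack{\mathsf z \in \Std_r(\nu) \\ \mathsf z_{[s,r]} \rhd \mft_{[s,r]} \\ \mathsf z(s) \rhd \la}} r_{\mathsf z}\, u^*_{\mathsf z} \mod A_r^{\rhd \nu}.
$$
Finally, the factorization \eqref{eqn factorization of u t} gives $u^*_{\mft_{[s,r]}}\, x = u^*_{\mft_{[s,r]}}\, u^*_{\mft_{[0,s]}}\, a = u^*_\mft\, a$, and the index set $\{\mathsf z_{[s,r]} \rhd \mft_{[s,r]}\}$ matches the index set $\{\mathsf z_{[s,r]} \rhd \mft\}$ in the target equation (the notation $\mathsf z_{[s,r]} \rhd \mft$ in the corollary is tacitly comparing only the tail of $\mft$, which is $\mft_{[s,r]}$). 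Combining gives precisely the asserted congruence.

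No step here is technically delicate beyond bookkeeping; the only mild subtlety is confirming that the conditions attached to the summation over $\mathsf z$ translate correctly between the Proposition (stated for a skew path $\mft_{[s,r]}$) and the Corollary (stated for the full path $\mft$), and this is immediate since the dominance comparison on $\mathsf z_{[s,r]}$ and the condition $\mathsf z(s) \rhd \la$ depend only on the tail $\mft_{[s,r]}$ and its initial vertex $\la = \mft(s)$.
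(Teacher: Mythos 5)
Your proposal is correct and is exactly the paper's argument: the paper's proof is the one-line ``Apply the previous Proposition with $x = u^*_{\mft_{[0,s]}}a$,'' and your write-up simply fills in the routine details (that $x \in \c_\la A_s$ via \eqref{eqn property of u t trunacation}, the factorization \eqref{eqn factorization of u t}, and the matching of index sets). No differences in approach.
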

 
 \begin{proof} Apply the previous Proposition with $x = u^*_{\mft_{[0, s]}} a$.  
 \end{proof}
 
 \begin{rem}
 \myref{Corollary}{restriction rule for Murphy basis}
 is an improvement of the restriction rule for ``path bases" obtained in 
 ~\cite[Proposition 2.18]{MR2774622}.
 \end{rem}
 
 We now consider restriction to $A_r \cap A_s' \subseteq A_r$.  
 
 \begin{prop} \label{Murphy restriction to commutant 1}
  Let $0 \le s < r$, $\nu \in \widehat A_r$, and $\stt \in \Std_r(\nu)$.  Write $\lambda = \stt(s)$.
 Let $x \in A_r \cap A_s'$.  Then there exist coefficients $r_\stw \in R$ such that
 $$
 u^*_\stt  x \equiv \sum_{\begin{subarray}c  \mathsf w \in \Std_r(\nu) \\ 
\mathsf w_{[0, s]} \unrhd  \stt_{[0, s]}  
  \end{subarray}} 
 r_\stw u^*_\stw  \quad \mod A_r^{\rhd \nu},
 $$
 \end{prop}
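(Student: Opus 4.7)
The plan is to combine dominance triangularity (Theorem~\ref{dominance triangularity}) with the restriction rule for the seminormal basis under $A_r^\FF \cap (A_s^\FF)'$ established in Proposition~\ref{restriction seminormal}(3), and then revert to the Murphy basis. Since the claim is a congruence modulo $A_r^{\rhd \nu}$, I would pass to the cell module $\cell r \FF \nu$, where the question reduces to showing that $m^\nu_\stt x$, expanded in the Murphy basis, has support only on paths $\stw \in \Std_r(\nu)$ with $\stw_{[0,s]} \unrhd \stt_{[0,s]}$.

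First I would apply Theorem~\ref{dominance triangularity} to expand $m^\nu_\stt$ in the seminormal basis,
$$
m^\nu_\stt = f^\nu_\stt + \sum_{\mfs \rhd \stt} r_\mfs f^\nu_\mfs,
$$
observing that every $\mfs$ appearing satisfies $\mfs(j) \unrhd \stt(j)$ pointwise, so in particular $\mfs_{[0,s]} \unrhd \stt_{[0,s]}$. Since $x \in A_r$ commutes with $A_s$, the image $x \otimes 1 \in A_r^\FF$ commutes with $A_s^\FF$, and Proposition~\ref{restriction seminormal}(3) says that each $f^\nu_\mfs x$ is an $\FF$-linear combination of seminormal basis elements $f^\nu_{\mfs_{[0,s]} \circ \mathsf r}$ with $\mathsf r \in \Std_{s,r}(\nu \setminus \mfs(s))$. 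Each such resulting path has initial segment $\mfs_{[0,s]} \unrhd \stt_{[0,s]}$. Concatenating the two observations, $m^\nu_\stt x$ is an $\FF$-linear combination of elements $f^\nu_\stw$ with $\stw_{[0,s]} \unrhd \stt_{[0,s]}$.

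Next I would revert to the Murphy basis by applying the inverse transition of Theorem~\ref{dominance triangularity}: each $f^\nu_\stw$ expands as $m^\nu_\stw$ plus terms $m^\nu_{\mathsf y}$ with $\mathsf y \rhd \stw$, hence with $\mathsf y_{[0,s]} \unrhd \stw_{[0,s]} \unrhd \stt_{[0,s]}$. The support condition is thereby preserved, giving the expansion $u^*_\stt x \equiv \sum r_\stw u^*_\stw \mod (A_r^\FF)^{\rhd \nu}$ over $\FF$. To descend to $R$, I would note that $u^*_\stt \in A_r$ by axiom~\eqref{diagram compatibility} and Remark~\ref{remark factorization}, so $u^*_\stt x \in A_r$; since the Murphy basis is an $R$-basis of $A_r$, the unique Murphy expansion of $u^*_\stt x$ has coefficients in $R$, and the error term automatically lies in $(A_r^\FF)^{\rhd \nu} \cap A_r = A_r^{\rhd \nu}$.

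The only delicate point in this plan is keeping track of how the dominance order on paths propagates through the two changes of basis and through the restriction formula; but since dominance is defined pointwise along a path, each step preserves (weak) dominance of initial segments, and the argument goes through cleanly. Compared with the results leading up to Corollary~\ref{restriction rule for Murphy basis}, the restriction is now to the commutant of $A_s$ rather than to $A_s$ itself, so one gains a constraint on the entire initial segment $\stw_{[0,s]}$ rather than merely on $\stw(s)$; this stronger conclusion is exactly what Proposition~\ref{restriction seminormal}(3) provides at the seminormal level.
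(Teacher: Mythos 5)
Your proposal is correct and follows essentially the same route as the paper's own proof: expand $m^\nu_\stt$ in the seminormal basis via Theorem~\ref{dominance triangularity}, apply Proposition~\ref{restriction seminormal}(3) to control the initial segments under the action of the commutant, convert back to the Murphy basis using dominance triangularity again, and conclude integrality of the coefficients from the fact that $u^*_\stt x$ lies in $A_r$ and the Murphy basis is an $R$-basis. No gaps.
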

 
 \begin{proof}  Using 
 \myref{Theorem}{dominance triangularity},
write
 $
 m_\stt^\nu =  f_\stt^\nu  + \sum_{\stu \rhd \stt}  \alpha_\stu   f_\stu^\nu
 $, 
 with coefficients $\alpha_\stu \in \FF$, so that 
  $
 m_\stt^\nu  x=  f_\stt^\nu  x + \sum_{\stu \rhd \stt}  \alpha_\stu   f_\stu^\nu x
 $.  Now applying 
 \myref{Proposition}{restriction seminormal}
 part (3), we see that $m_\stt^\nu  x$ is an $\FF$--linear combination of seminormal basis elements $f_\stv^\nu$ with
 $\stv_{[0,s]} \unrhd \stt_{[0,s]}$.  Using
 \myref{Theorem}{dominance triangularity}
again,
  each such $f_\stv^\nu$ is an $\FF$--linear combination of Murphy basis elements $m_\stw^\nu$, with
  $\stw \unrhd \stv$, and thus $\stw_{[0,s]} \unrhd \stv_{[0,s]}  \unrhd \stt_{[0,s]}$.    Thus we get
  $$
 u^*_\stt  x = \sum_{\begin{subarray}c  \mathsf w \in \Std_r(\nu) \\ 
\mathsf w_{[0, s]} \unrhd  \stt_{[0, s]}  
  \end{subarray}} 
 r_\stw u^*_\stw   + y,
 $$
 with coefficients $r_\stw \in \FF$ and with $y \in A_r^{\rhd \nu} \otimes_R \FF$.    Since $u^*_\stt x \in A_r^R$,  the coefficients are necessarily in $R$ and $y \in  A_r^{\rhd \nu}$.  
 \end{proof}

    \section{Jucys--Murphy elements}  \label{section JM elements}
\def\JM{Jucys--Murphy }
We recall the definition and first properties of families of Jucys--Murphy elements for diagram algebras. The action of Jucys--Murphy elements on cell modules for diagram algebras was first  considered systematically in  Goodman--Graber \cite{MR2774622}, motivated by work of   Mathas ~\cite{MR2414949}.   
 In this section, we use 
 \myref{Theorem}{dominance triangularity}
 to  strengthen the  results of 
 ~\cite{MR2774622} by replacing the reverse lexicographic order on  tableaux with 
 the dominance order on  tableaux.  
 
 The treatment of \JM and the seminormal basis in \cite{MR2774622} proceeds as follows. From the definition of \JM  elements 
({\myref{Definitions}{defn additive JM} and \myrefnospace{}{defn mult JM})
one concludes that the \JM elements act triangularly with respect to reverse lexicographic order on the Murphy basis.  Therefore, \JM elements in the sense of ~\cite{MR2774622}  are also \JM elements in the sense of Mathas 
~\cite[Definition 2.4]{MR2414949}.  When the ground ring is a field  $\FF$  and Mathas' separation condition 
~\cite[Definition 2.8]{MR2414949} is satisfied -- which is true for standard examples of diagram algebras over the field of fractions of the generic ground ring --  then by ~\cite[Corollary 2.9]{MR2414949}, the algebras $A_r^\FF$ are split semisimple.  Moreover, 
following ~\cite[Section 3]{MR2414949} one can define a family
orthogonal idempotents  $F_\mft'$ labelled by $\mft \in \Std_r$, using interpolation formulas for the \JM elements.  By ~\cite[Proposition 3.11]{MR2774622},  the idempotents $F'_\mft$ coincide with the Gelfand--Zeitlin idempotents  $F_\mft$  for the tower $(A_r^\FF)_{r \ge 0}$.   Finally, one defines the seminormal bases as in 
\myref{Definition}{defn seminormal}.
 Since the $F_\mft$ are polynomials in the \JM elements,  it follows that transition matrix between the Murphy basis and the seminormal basis is unitriangular with respect to reverse lexicographic order. 

Here we reverse the logic.  We consider a tower of diagram algebras, i.e. a tower $(A_r)_{r \ge 0}$ satisfying \eqref{diagram 1}--\eqref{diagram compatibility}.    Then we already have the Gelfand--Zeitlin idempotents at our disposal and can define the seminormal bases as in 
\myref{Definition}{defn seminormal}.
 Moreover, the transition matrix between the Murphy basis and the seminormal basis is dominance unitriangular by 
 \myref{Theorem}{dominance triangularity}.
In case \JM elements in the sense of \myref{Definitions}{defn additive JM} and  \myrefnospace{}{defn mult JM} exist,  we will show that they act diagonally on the seminormal basis.  It then follows from  
\myref{Theorem}{dominance triangularity}
 that the \JM elements act triangularly on the Murphy basis with respect to dominance order.   Assuming Mathas' separation condition holds -- as it does in standard examples -- one can then take up the theory in
  ~\cite[Section 3]{MR2414949}, defining orthogonal idempotents by interpolation formulas using the \JM elements, and these must coincide with the Gelfand--Zeitlin idempotents by ~\cite[Proposition 3.11]{MR2774622}.  

For the rest of this section, assume $(A_r)_{r \ge 0}$  is a tower of diagram algebras over an integral domain $R$ with field of fractions $\FF$, 
satisfying \eqref{diagram 1}--\eqref{diagram compatibility}.

   \begin{defn}  \label{defn additive JM}
 We say that a family of  elements $\{L_r \mid  r \ge 1\}$,  is an {\sf additive family} of {\sf Jucys--Murphy  elements} if 
    the following conditions hold.
 \begin{enumerate}
  \item    For all $r \ge 1$, $L_r \in A_r$, $L_r$ is invariant under the involution of $A_r$,   and  
  $L_r$ commutes with $A_{r-1}$.  In particular,   $L_i L_j = L_j L_i$ for all $1\leq i\leq j \leq r$.
  \item For all $r \ge 1$ and   $\la \in\widehat{A}_r$,   there exists  $d(\la) \in R$  such that    $ L_1 +  \cdots + L_r$   acts as the scalar   $d(\la)$  on the cell module $\Delta^R_r(\la)$.
  For $\lambda \in \widehat{A}_{0}$, we let $d(\lambda)=0$.
    \end{enumerate}
    \end{defn}
  
\begin{eg}
 The  group algebras of symmetric groups, Temperley--Lieb, Brauer, walled Brauer,   and partition algebras
 all possess additive families of Jucys--Murphy elements.
 \end{eg}

   \begin{defn}   \label{defn mult JM}
    We say that a family of  elements $\{L_r \mid   r \ge 1\}$,  is a  {\sf multiplicative  family} of {\sf Jucys--Murphy  elements}
   if  the following conditions hold.
 \begin{enumerate}
  \item    For all $r \ge 1$,  
    $L_r$ is an invertible element of    $A_r$,
  $L_r$ is invariant under the involution $\ast$, 
   and  
  $L_r$ commutes with $A_{r-1}$.  In particular,   $L_i L_j = L_j L_i$ for all $1\leq i\leq j \leq r$.
  \item For all $r \ge 1$ and   $\la \in\widehat{A}_r$,   there exists  $d(\la) \in R$  such that    $ L_1   \cdots  L_r$   acts as the scalar   $d(\la)$  on the cell module $\Delta^R_r(\la)$.
  For $\lambda \in \widehat{A}_{0}$, we let $d(\lambda)=1$.
    \end{enumerate}
    \end{defn}
  
 \begin{eg}

  The  Hecke algebras of finite type $A$ and the Birman--Murakami--Wenzl algebra possess a multiplicative family of Jucys--Murphy elements.   
\end{eg}

  \begin{prop} \label{JM diagonal seminormal}  Assume that the tower $(A_r)_{r \ge 0}$ has additive or multiplicative \JM elements $L_i$.  Then the \JM elements act diagonally on the seminormal bases.  More precisely, there exist scalars $\kappa_{\mu \to \la} \in R$ associated to edges $\mu \to \la$ in $\widehat A$ such that for all $r$,  $\la \in \widehat A_r$, $\mft \in \Std_r(\la)$ and  $i \le r$, 
  $$
  f^\la_\mft L_i = \kappa_{\mft(i-1) \to \mft(i)}  f^\la_\mft.
  $$
  \end{prop}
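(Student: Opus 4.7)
The plan is to first show that every \JM element $L_i$ commutes with the entire Gelfand--Zeitlin subalgebra $G_r$, to deduce from this that $L_i$ must act by a scalar on each seminormal basis vector, and finally to pin the eigenvalues down by applying the scalar actions of the partial sums (respectively partial products) $L_1+\cdots+L_j$ on the cell modules $\Delta_j^\FF(\mft(j))$ and lifting these to $\cell r \FF \la$ via \myref{Proposition}{restriction seminormal}(1).

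For the commutation step, note that $L_i$ commutes with $A_{i-1}$ by hypothesis, hence with every central idempotent $\z k \mu$ for $k\le i-1$; whilst for $k \ge i$ we have $L_i \in A_i \subseteq A_k$, so $L_i$ commutes with each $\z k \mu \in Z(A_k^\FF)$ as well. Therefore $L_i$ commutes with every Gelfand--Zeitlin idempotent $F_\mfs$. In particular, since $f^\la_\mft F_\mft = f^\la_\mft$, we have $f^\la_\mft L_i = f^\la_\mft L_i F_\mft$, so $f^\la_\mft L_i$ lies in the image of $F_\mft$ acting on $\cell r \FF \la$; by \myref{Lemma}{seminormal form} this image is one-dimensional and spanned by $f^\la_\mft$ itself. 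Consequently $f^\la_\mft L_i = c^\la_\mft\, f^\la_\mft$ for some scalar $c^\la_\mft \in \FF$.

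To identify $c^\la_\mft$ in the additive case, fix $0 \le j \le r$ and set $\mu = \mft(j)$. The element $L_1+\cdots+L_j \in A_j$ acts as the scalar $d(\mu) \in R$ on $\Delta_j^\FF(\mu)$, so in particular $f^\mu_{\mft_{[0,j]}} (L_1+\cdots+L_j) = d(\mu) f^\mu_{\mft_{[0,j]}}$. For $j<r$, \myref{Proposition}{restriction seminormal}(1) at once lifts this to $f^\la_\mft (L_1+\cdots+L_j) = d(\mft(j)) f^\la_\mft$ in $\cell r \FF \la$; for $j=r$ the identity holds tautologically. Subtracting the relations at $j=i$ and $j=i-1$, with the convention that the empty sum is $0$ and $d(\varnothing)=0$, yields
\begin{equation*}
f^\la_\mft L_i = \bigl(d(\mft(i)) - d(\mft(i-1))\bigr) f^\la_\mft,
\end{equation*}
so the scalar $\kappa_{\mu \to \la} := d(\la) - d(\mu) \in R$ depends only on the edge $\mu \to \la$. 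The multiplicative case is handled identically after replacing partial sums by partial products and the difference by a quotient; note that $d(\mft(i-1))$ is invertible in $R$ because $L_1\cdots L_{i-1}$ is invertible and acts on the free $R$-module $\Delta_{i-1}^R(\mft(i-1))$ by this scalar, so $\kappa_{\mu \to \la} := d(\la) d(\mu)^{-1}$ again lies in $R$.

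I do not foresee any substantial obstacle. The heart of the argument is the observation that $L_i$ lies in the commutant of $G_r$, after which the diagonal action of $L_i$ on the seminormal basis is forced by the one-dimensionality of the image of $F_\mft$ on the simple module $\cell r \FF \la$, and identifying the eigenvalues by a telescoping computation using the known scalar actions of $L_1 + \cdots + L_j$ is then essentially automatic.
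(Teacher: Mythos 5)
Your proof is correct and its core — lifting the scalar action of the partial sums (or products) $L_1+\cdots+L_j$ to $f^\la_\mft$ via \myref{Proposition}{restriction seminormal}(1) and then telescoping — is exactly the paper's argument. The preliminary paragraph showing $L_i$ commutes with the Gelfand--Zeitlin idempotents is correct but redundant (the telescoping already forces diagonality), while your explicit justification that $d(\mu)$ is a unit of $R$ in the multiplicative case is a detail the paper leaves implicit.
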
 
  
  \begin{proof}  We consider the case of additive \JM elements.  The proof for multiplicative \JM elements is nearly identical.   Let $\kappa_{\mu \to \la} = d(\la) - d(\mu)$, where $d(\cdot)$ is as in 
  \myref{Definition}{defn additive JM}.
  For $i \ge 1$ and $\mu \in \widehat A_i$, the sum 
  $L_1 + \cdots +L_i$ acts as the scalar $d(\mu)$ on $\cell i {} \mu$.  It follows from the restriction rule for the seminormal representations 
  (\myref{Proposition}{restriction seminormal})
that for all $r \ge i$, $\la \in \widehat A_r$ and $\mft \in \Std_r(\la)$,  $f^\la_\mft (L_1 + \cdots + L_i) = 
  d(\mft(i)) f^\la_\mft $.    Hence $f^\la_\mft L_i =  (d(\mft(i) - d(\mft(i-1))) f^\la_\mft$.
  \end{proof}

\begin{notation}  We will also write $\kappa_\mft(i) $ for $\kappa_{\mft(i-1) \to \mft(i)}$.  
\end{notation}

 \begin{thm} \label{JM dominance triangular}
   Assume that the tower $(A_r)_{r \ge 0}$ has additive or multiplicative \JM elements $L_i$.  Then the \JM elements act  triangularly with respect to dominance order on the Murphy basis of the cell modules, i.e.    for $r \ge 1$,  $\la \in \widehat A_r$ and $\mft \in \Std_r(\la)$,
 \begin{equation} \label{eqn JM dominance triangular}
 m^\la_\mft L_i = \kappa_\mft(i) m^\la_\mft  + \sum_{\mfs \rhd \mft} r_\mfs m^\la_\mfs,
 \end{equation}
 where the coefficients are in $R$.
%  and the sum is over $\mfs$ with $\mfs \rhd \mft$.
 \end{thm}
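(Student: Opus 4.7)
My plan is to compute the action of $L_i$ on $m^\la_\mft$ by passing through the seminormal basis, where by \myref{Proposition}{JM diagonal seminormal} the $L_i$ act diagonally, and then to translate back using the dominance unitriangular change-of-basis provided by \myref{Theorem}{dominance triangularity}. The bookkeeping on triangularity is straightforward once the computation is organized this way; the only nontrivial point is integrality of the coefficients, since the transition between Murphy and seminormal bases involves the field of fractions $\FF$.

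In detail, I would first use \myref{Theorem}{dominance triangularity} to write, in $\cell r \FF \la$,
\[
m^\la_\mft = f^\la_\mft + \sum_{\mfs \rhd \mft} r_\mfs\, f^\la_\mfs, \qquad r_\mfs \in \FF.
\]
Applying $L_i$ and invoking \myref{Proposition}{JM diagonal seminormal} yields
\[
m^\la_\mft L_i = \kappa_\mft(i)\, f^\la_\mft + \sum_{\mfs \rhd \mft} r_\mfs \kappa_\mfs(i)\, f^\la_\mfs.
\]
Now invert, expressing each $f^\la_\mfu$ as $m^\la_\mfu + \sum_{\mfv \rhd \mfu} r'_{\mfu,\mfv}\, m^\la_\mfv$, again by \myref{Theorem}{dominance triangularity}. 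Collecting terms, the coefficient of $m^\la_\mft$ on the right is exactly $\kappa_\mft(i)$, because the only $f^\la_\mfu$ that can contribute to $m^\la_\mft$ via the expansion has $\mfu = \mft$. All remaining contributions are scalar multiples of $m^\la_\mfu$ for $\mfu \rhd \mft$. Thus
\[
m^\la_\mft L_i = \kappa_\mft(i)\, m^\la_\mft + \sum_{\mfs \rhd \mft} \alpha_\mfs\, m^\la_\mfs,
\]
with $\alpha_\mfs \in \FF$.

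It remains to check that the coefficients $\alpha_\mfs$ lie in $R$ rather than only in $\FF$. For this, observe that $L_i \in A_r$ and $m^\la_\mft \in \cell r R \la$, so $m^\la_\mft L_i$ is an element of the $R$-module $\cell r R \la$. The Murphy basis $\{m^\la_\mfu \suchthat \mfu \in \Std_r(\la)\}$ is an $R$-basis of $\cell r R \la$, and $\cell r R \la$ embeds into $\cell r \FF \la$ via the identification $m^\la_\mft \mapsto m^\la_\mft \otimes 1_\FF$. The expansion of $m^\la_\mft L_i$ in the Murphy basis of $\cell r R \la$ must therefore agree with its unique $\FF$-expansion derived above, forcing $\alpha_\mfs \in R$. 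This establishes \eqref{eqn JM dominance triangular}.

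The principal obstacle in this argument is simply ensuring that the change-of-basis step does not disturb the dominance condition, but this is immediate from the fact that both directions of \myref{Theorem}{dominance triangularity} are unitriangular with respect to the same partial order. The multiplicative \JM case is proved identically, replacing $L_1 + \cdots + L_i$ by $L_1 \cdots L_i$ throughout; this was already handled in the proof of \myref{Proposition}{JM diagonal seminormal}, and no further modification is needed here.
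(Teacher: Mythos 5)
Your proof is correct and follows exactly the paper's route: conjugate the diagonal action of $L_i$ on the seminormal basis (Proposition \ref{JM diagonal seminormal}) through the dominance-unitriangular change of basis of Theorem \ref{dominance triangularity}, then deduce integrality of the coefficients from $L_i \in A_r$ and the fact that the Murphy basis is an $R$-basis of $\cell r R \la$. The paper's own proof is just a two-line compression of the same argument, so no further comment is needed.
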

 
 \begin{proof}  Using 
 \myref{Proposition}{JM diagonal seminormal}
 and 
 \myref{Theorem}{dominance triangularity},
 one obtains  
\eqref{eqn JM dominance triangular} with coefficients in $\FF$.    But since $L_i \in A_r^R$,  the coefficients are necessarily in $R$. 
 \end{proof}
 
 % \subsection{Applications}
  The dominance triangularity  results
 \myref{Theorems}{dominance triangularity}
  and 
  \myrefnospace{}{JM dominance triangular}
 apply to the following examples:  
 \begin{enumerate}[label=(\arabic{*}), ref=\arabic{*},leftmargin=0pt,itemindent=1.5em, series= JM examples]
 \item   The symmetric group algebras and the Hecke algebras of finite type $A$.  This example is discussed at length in  \myref{Section}{section: hecke} below.
 \end{enumerate}
 
 For the remaining examples,  dominance triangularity of the \JM elements, and of the transition matrix  from a path basis or a Murphy type basis to the seminormal basis, are new results.

 \begin{enumerate}[label=(\arabic{*}), ref=\arabic{*},leftmargin=0pt,itemindent=1.5em, resume= JM examples]
 \item The towers of  Tempreley--Lieb algebras,  partition algebras, Brauer algebras, and  Birman--Murakami--Wenzl algebras  \cite{EG:2012}.  
 \item The towers  of centralizer algebras on tensor space discussed in \myref{Example}{theoneexample}, \cite{BEG}.  
 \end{enumerate}

\section{Application to the Hecke algebras of the symmetric groups}  \label{section: hecke}

The arguments of \myref{Sections}{section seminormal} and \myrefnospace{}{section JM elements}, applied to the Hecke algebras of the symmetric groups,  result in some modest simplifications of the theory of these algebras, as presented, for example, in  ~\cite{MR1711316}. 

Let $S$ be an integral domain and $q\in S$ a unit.  The Hecke algebra $H_r(S; q)$ is the unital $S$--algebra with generators $T_1, \dots, T_{r-1}$  satisfying the braid relations and the quadratic relation
$(T_i - q) (T_i +1) = 0$.  For any $S$, the specialization  $H_r(S; 1)$ is isomorphic to $S \mathfrak S_r$.    
The generic ground ring for the Hecke algebras is the Laurent polynomial ring $R = \ZZ[\qbold, \qbold\inv]$, where $\qbold$ is an indeterminant.    Let $\FF$ denote $\mathbb Q(\qbold)$, the field  of fractions of $R$. 

We will write $H_r(\qbold)$ for $H_r(R; \qbold)$.   
The algebra $H_r(\qbold)$ has an $R$--basis $\{T_w \suchthat w \in \mathfrak S_r\}$, defined as follows:  if $w = s_{i_1} s_{i_2} \cdots s_{i_l}$ is a reduced expression for $w$ in the usual generators of $\mathfrak S_r$,  then $T_w = T_{i_1} T_{i_2} \cdots T_{i_l}$,  independent of the reduced expression.   
Define $T_w^* = T_{w\inv}$;  then $*$ is an algebra involution on $H_r(\qbold)$.

Let $\widehat {\mathfrak S}_r$ denote the set of Young diagrams or partitions of size $r$.   
 {\sf Dominance order} $\unrhd$ on $\widehat {\mathfrak S}_r$ is determined by $\lambda \unrhd \mu$ if    $\sum_{i = 1}^j  \lambda_i \ge \sum_{i = 1}^j  \mu_i $ for all $1\leq j\leq r$. 
{\sf Young's graph} or {\sf  lattice},  $\widehat{\mathfrak S}$, is the branching diagram with vertices 
$\widehat{\mathfrak S}_r$
on level $r$ and a directed edge $\lambda \to \mu$  if $\mu$ is obtained from $\lambda$ by adding one box.   We can identify {\sf standard tableaux} of shape $\lambda$ with directed paths on $\widehat{\mathfrak S}$ from $\varnothing$ to $\lambda$.  For $\lambda \in \widehat{\mathfrak S}_r$,  denote the set of standard tableaux of shape $\lambda$ by $\Std_r(\lambda)$.  

Let $\lambda$ be a Young diagram and let $\alpha = (i,j)$ be a box in $\lambda$.  The {\sf content} of $\alpha$ is $c(\alpha) = j-i$.   If $\stt \in \Std_r(\nu)$,  and $k \le r$, define $c_\stt(k) = c(\alpha)$,  where
$\alpha = \stt(k) \setminus \stt(k-1)$.   When $\stt$ is regarded as an array with the boxes of the Young diagram $\nu$ filled with the numbers from $1$ to $r$,  $c_\stt(k)$ is the content of the box containing the entry $k$.  

 For   $\lambda$ a partition of $ r$,  let $x_\lambda =  \sum_{w \in \mathfrak S_\lambda} T_w$.
 Let $\stt^\lambda$ be the row reading tableaux of shape $\lambda$.  For any $\lambda$--tableau $\stt$,  there is a unique $w(\stt) \in \mathfrak S_r$ with $\stt = \stt^\lambda w(\stt)$. 
   For $\lambda$ a partition of $r$ and $\sts,\stt\in\Std_r(\la)$, let 
   $$x^\lambda_{\sts \stt} =  (T_{w(\sts)})^*  x_\lambda T_{w(\stt)}.$$ 
   
   \begin{thm}[The Murphy basis, \cite{MR1327362}]  \label{thm Murphy basis of H n}  \mbox{}
 The set \ 
$
\mathcal X = \{  x_{\mfs  \mft}^\lambda  \suchthat  \lambda \in \widehat{\mathfrak S}_r \text{ and } \sts, \stt  \in \Std_r(\lambda) \}
$
is a cellular basis of $H_r(R; \qbold)$,   with respect to the involution $*$ and the partially ordered set
$(\widehat{\mathfrak S}_r, \unrhd)$.  
\end{thm}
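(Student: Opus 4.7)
The plan is to derive the theorem from the abstract cellular basis construction of \myref{Theorem}{theorem abstract Murphy basis}, applied to the tower of Hecke algebras $(H_r(\qbold))_{r \ge 0}$. This requires two things: verifying that this tower satisfies axioms \eqref{diagram 1}--\eqref{diagram compatibility}, and then identifying the abstract Murphy basis produced by that theorem with the classical elements $x^\lambda_{\mfs \mft}$.

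First I would dispatch the easier axioms. Take $\M_\lambda = x_\lambda$; since $\mathfrak S_\lambda$ is closed under inversion and $T_w^* = T_{w^{-1}}$, this $\M_\lambda$ is $*$--invariant, yielding \eqref{diagram 5}. Axiom \eqref{diagram 1} is immediate, while \eqref{diagram 2} reduces to the standard fact that the Specht module $S^\lambda$ is cyclically generated by the image of $x_\lambda$. Axiom \eqref{diagram semisimplicity} (split semisimplicity of $H_r(\FF; \qbold)$) is classical. Axiom \eqref{diagram restriction coherent} follows from the branching rule: if the covers from below of $\mu \in \widehat{\mathfrak S}_r$ in Young's lattice are $\lambda^{(1)} \rhd \cdots \rhd \lambda^{(s)}$, then $\Res_{H_{r-1}}^{H_r} S^\mu$ admits an order--preserving Specht filtration with sections $S^{\lambda^{(1)}}, \ldots, S^{\lambda^{(s)}}$.

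The substantive step is the compatibility axiom \eqref{diagram compatibility}. For each cover $\mu \to \lambda$ in Young's lattice, where $\lambda$ is obtained from $\mu$ by adding a single box, I would set $d_{\mu \to \lambda} = T_{v(\mu \to \lambda)}$, where $v(\mu \to \lambda)$ is the distinguished minimal length coset representative of $\mathfrak S_\mu$ in $\mathfrak S_\lambda$ that relates the row--reading tableaux $\stt^\mu$ and $\stt^\lambda$, and would define $u_{\mu \to \lambda}$ analogously. The identity $x_\lambda d_{\mu \to \lambda} = u_{\mu \to \lambda}^* x_\mu$ is then established by direct manipulation of reduced expressions, exploiting the quadratic relation $(T_i - \qbold)(T_i + 1) = 0$ and the standard theory of distinguished double coset representatives for Young subgroups. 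This is the main obstacle, as it requires nontrivial combinatorics with reduced words in the symmetric group.

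Once all axioms are verified, \myref{Theorem}{theorem abstract Murphy basis} supplies a cellular basis $\{m^\lambda_{\mfs \mft} = d_\mfs^* x_\lambda d_\mft \suchthat \mfs, \mft \in \Std_r(\lambda)\}$ of $H_r(\qbold)$. To conclude, a short induction on the length of the path shows that the concatenation of the minimal coset representatives $v(\mft(k-1) \to \mft(k))$ yields a reduced expression for $w(\mft)$, hence $d_\mft = T_{w(\mft)}$ and therefore $m^\lambda_{\mfs \mft} = x^\lambda_{\mfs \mft}$, completing the proof.
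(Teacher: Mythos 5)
This theorem is quoted from Murphy's paper \cite{MR1327362}: the paper gives no proof of it, and the surrounding discussion in \myref{Section}{section: hecke} is deliberately arranged so that the classical Murphy basis is taken as \emph{input} to the abstract framework rather than derived from it. Your proposal runs the logic in the opposite direction, and in doing so it hits two genuine obstructions. First, \myref{Theorem}{theorem abstract Murphy basis} takes as its hypothesis an increasing sequence of algebras that are \emph{already cellular}: the axioms \eqref{diagram 1}--\eqref{diagram compatibility} are conditions on the cell modules of a pre-existing cell datum, and the conclusion is that the new basis is equivalent to the original one. To verify, for instance, that $H_r(R;\qbold)$ is cyclic cellular with cell modules the Specht modules generated by the image of $x_\lambda$, you must already possess a cellular basis of $H_r(R;\qbold)$ --- which is exactly what you are trying to prove. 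Unless you start from an independent cellular structure (say the Kazhdan--Lusztig basis of \cite{MR1376244}) and then prove equivalence of the two structures, the argument is circular; your write-up does not address this.

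Second, the step you dismiss with ``follows from the branching rule'' is the hardest input of all. Axiom \eqref{diagram restriction coherent} requires an order-preserving cell filtration of $\Res^{H_r}_{H_{r-1}}(\Delta_r(\mu))$ over the integral ground ring $R=\ZZ[\qbold,\qbold^{-1}]$, not over $\FF$; the branching rule only yields such a filtration after tensoring with $\FF$, where everything is semisimple. The integral statement is property \eqref{H coherent}, which the paper explicitly flags as substantial and only recently established in \cite{GKT-2014,Mathas:2016} --- and the known proofs themselves rely on Murphy's standard basis, compounding the circularity. By contrast, Murphy's original argument is a direct combinatorial one with the elements $x_\lambda T_{w(\stt)}$ and permutation modules, requiring none of this machinery. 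What \emph{is} salvageable from your proposal is the final identification: $d_\mft=T_{w(\mft)}$ and the compatibility $x_\mu d_{\lambda\to\mu}=u^*_{\lambda\to\mu}x_\lambda$ are precisely \myref{Proposition}{prop branching factors Hecke}, proved by direct computation in \cite{EG:2012}, and the paper stresses that these computations are independent of any cell-filtration interpretation of the branching factors --- which is why they can be used, together with the already-known \myref{Theorem}{thm Murphy basis of H n}, to feed the Hecke algebras into the abstract framework rather than the other way around.
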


We let $x^\lambda_\stt$ denote the basis element of the cell module $\cell r {} \lambda$ corresponding to $\stt \in \Std_r(\lambda)$, 
$x^\lambda_\stt =  x_\lambda T_{w(\stt)} + H_r^{\rhd \lambda}$.  
If $1 \le a \le i$, define
$$
T_{a, i} =  T_a T_{a+1} \cdots T_{i-1},
$$
and
$T_{i, a} = T_{a, i}^*  $.  
If $\lambda\vdash i-1$ and $\mu\vdash i$, with $\mu=\lambda\cup\lbrace (j,\mu_j)\rbrace$, let $a=\sum_{k=1}^{j}\mu_k$.  Define
 \begin{align} \label{branching factors for Hecke defn}
d_{\lambda\to\mu}=T_{a, i}\qquad\text{and}\qquad u_{\lambda\to\mu}=T_{i, a}\sum_{k=0}^{\lambda_j}T_{a,a-k}, 
\end{align}
 Given a  path
$\mft \in \Std_{r}(\nu )$, 
  $$
\varnothing = \stt(0) \to \stt(1) \to    \stt(2)\to   \dots \to  \stt(r-1)\to \stt(r) = \nu, 
 $$
define
 $$
d_\stt=  d_{\stt({r-1}) \to \stt( r)}  d_{ \stt( {r-2}) \to \stt( {r-1})} \cdots  d_{\stt(0) \to \stt( 1)}.
$$

\begin{prop}[\cite{EG:2012}]  \label{prop branching factors Hecke} \mbox{}
 \begin{enumerate}[label=(\arabic{*}), font=\normalfont, align=left, leftmargin=*]
 \item   For $\stt \in \Std_r(\nu)$, one has $d_\stt = T_{w(\stt)}$.
 \item  For $\lambda \to \mu$ in $\widehat{\mathfrak S}$, one has  $x_\mu d_{\lambda \to \mu} = u_{\lambda \to \mu}^*  x_\lambda$.
 \end{enumerate}
\end{prop}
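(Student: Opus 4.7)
I would prove the two parts separately: Part (1) by induction on $r$, and Part (2) by a coset decomposition together with a Hecke-algebra intertwining identity.

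\emph{Part (1).} Induct on $r$, the case $r=0$ being trivial. For the step, let $\lambda = \stt(r-1)$ and fix $a,j$ as in~\eqref{branching factors for Hecke defn}. The inductive hypothesis gives $d_\stt = T_{a,r}\, T_{w(\stt_{[0,r-1]})}$. Let $\sigma_{a,r}$ be the cycle $(a,a+1,\dots,r) \in \mathfrak S_r$ corresponding to $T_{a,r}$, and $\iota\colon \mathfrak S_{r-1} \hookrightarrow \mathfrak S_r$ the natural embedding. A direct inversion count, using that $\sigma_{a,r}$ fixes $\{1,\dots,a-1\}$, shifts $\{a,\dots,r-1\}$ up by one, and sends $r$ to $a$, yields $\ell(\sigma_{a,r}\,\iota(w')) = \ell(\sigma_{a,r}) + \ell(w')$ for any $w' \in \mathfrak S_{r-1}$, so that $d_\stt = T_{\sigma_{a,r}\,\iota(w(\stt_{[0,r-1]}))}$. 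To conclude, observe that deleting the entry $a$ from $\stt^\nu$ and decrementing the entries greater than $a$ produces $\stt^\lambda$; combining this with $\stt = \stt^\nu w(\stt)$ and $\stt_{[0,r-1]} = \stt^\lambda w(\stt_{[0,r-1]})$ gives the explicit recipe $w(\stt)(r) = a$ and, for $k < r$, $w(\stt)(k) = w(\stt_{[0,r-1]})(k)$ if $w(\stt_{[0,r-1]})(k) < a$ or $w(\stt_{[0,r-1]})(k) + 1$ if $w(\stt_{[0,r-1]})(k) \ge a$. This exactly matches the direct evaluation of $\sigma_{a,r}\,\iota(w(\stt_{[0,r-1]}))$ on each $k$.

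\emph{Part (2).} Let $\phi(\mathfrak S_\lambda) \subseteq \mathfrak S_\mu$ denote the pointwise stabilizer of the position $a$ inside $\mathfrak S_\mu$; it is a parabolic subgroup of $\mathfrak S_\mu$ (obtained by removing the simple reflection $s_{a-1}$) isomorphic to $\mathfrak S_\lambda$, with distinguished minimal-length coset representatives $g_k := s_{a-k}s_{a-k+1}\cdots s_{a-1}$ for $k = 0,\dots,\lambda_j$. Parabolic length-additivity then gives
\[ x_\mu = \Bigl(\sum_{k=0}^{\lambda_j} T_{a-k,a}\Bigr)\, x_{\phi(\mathfrak S_\lambda)}. \]
The key step is the intertwining identity $x_{\phi(\mathfrak S_\lambda)}\, T_{a,i} = T_{a,i}\, x_\lambda$ in $H_i(\qbold)$. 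At the permutation level, one verifies $\sigma_{a,i}^{-1}\,\phi(w)\,\sigma_{a,i} = \iota(w)$ for $w \in \mathfrak S_\lambda$ by checking it on the generating simple reflections of $\mathfrak S_\lambda$: every such generator has index $\le a - 2$ or $\ge a$, and under $\phi$ it maps to $s_m$ or $s_{m+1}$ respectively, matching the conjugation by $\sigma_{a,i}$ term by term. Inversion counts of the same flavour as in Part~(1) give the length-additivities $\ell(\phi(w)\sigma_{a,i}) = \ell(\phi(w)) + \ell(\sigma_{a,i})$ and $\ell(\sigma_{a,i}\iota(w)) = \ell(\sigma_{a,i}) + \ell(\iota(w))$, which lift the permutation identity to $T_{\phi(w)}T_{a,i} = T_{a,i} T_{\iota(w)}$ for each $w \in \mathfrak S_\lambda$. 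Summing over $w$ produces the intertwining identity, and combining with the coset decomposition of $x_\mu$ yields
\[ x_\mu T_{a,i} = \Bigl(\sum_{k=0}^{\lambda_j} T_{a-k,a}\Bigr) T_{a,i}\, x_\lambda = u_{\lambda\to\mu}^*\, x_\lambda. \]

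\emph{Main obstacle.} The only real technical obstacle in either part is ensuring that each permutation-level identity lifts faithfully to a $T$-basis identity in the Hecke algebra. In every case this reduces to a direct inversion count in the symmetric group, exploiting the simple combinatorial structure of the cycle $\sigma_{a,\bullet}$ (identity on positions below $a$, shift on positions above, and wrap-around at the top).
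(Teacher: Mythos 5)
Your argument is correct, and it is exactly the kind of direct computation the paper has in mind: the paper itself gives no proof of this proposition, citing \cite{EG:2012} and remarking only that it ``is established by computation,'' and your coset decomposition of $x_\mu$ over the stabilizer of position $a$ together with the length-additivity/intertwining checks is the standard way that computation is carried out. The only caution is notational: you work with permutations as left-action functions while the Murphy-basis conventions (e.g.\ $\stt=\stt^\lambda w(\stt)$) use right actions, so the pointwise descriptions of $\sigma_{a,r}$ and $w(\stt)$ should be stated in one fixed convention, but the group-level identities $w(\stt)=\sigma_{a,r}\,\iota(w(\stt_{[0,r-1]}))$ and $\phi(w)\sigma_{a,i}=\sigma_{a,i}\iota(w)$ that drive the proof are convention-independent and correct.
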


Thus the Murphy cellular basis is recovered from the ordered products of branching factors:
$x_{\sts \stt}^\lambda =  (d_\sts)^*  x_\lambda d_\stt$.    Moreover, 
\myref{Remark}{remark factorization}   regarding factorization of representatives of the Murphy basis of cell modules applies to the Hecke algebra.   We want to stress that \myref{Proposition}{prop branching factors Hecke} is established by computation and does not rely on the connection, established in ~\cite{EG:2012}, between the branching factors $d_{\lambda \to \mu}$ and $u_{\lambda \to \mu}$ and cell filtrations of restricted and induced cell modules. 

Next, we recall the Jucys--Murphy elements for the Hecke algebras, see ~\cite{MR1711316}, Section 3.3 and Exercise 6, page 49.   The JM elements  in $H_n(\qbold)$ are defined by
$$
L_1 = 1  \text{ and } L_k =   \qbold^{1-k} T_{k-1} \cdots T_1 T_1 \cdots T_{k-1} \text{ for }  k > 1.  
$$

\begin{prop} \label{prop JM for Hecke} \mbox{}
 \begin{enumerate}[label=(\arabic{*}), font=\normalfont, align=left, leftmargin=*]
 \item  $L_1 L_2 \cdots L_r$ is in the center of $H_r(\qbold)$.
 \item  The elements $L_k$  are multiplicative JM elements in the sense of \myref{Definition}{defn mult JM}.
 \end{enumerate}
\end{prop}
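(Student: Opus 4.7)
The plan is to verify each condition of \myref{Definition}{defn mult JM} directly, with part~(1) of the proposition emerging along the way and providing the scalar-action clause needed to complete the check. First, the easy items: each $T_i$ is invertible because the quadratic relation gives $T_i^{-1} = \qbold^{-1}(T_i + 1 - \qbold)$, so $L_k$ is a product of invertibles, hence invertible; and the involution $*$ fixes each $T_i$, while the defining word for $L_k$ is palindromic, so $L_k^* = L_k$.

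The substantive part of \myref{Definition}{defn mult JM}(1) is that $L_r$ commutes with $H_{r-1}(\qbold)$. I would induct on $r$ using the recursion $L_r = \qbold^{-1} T_{r-1} L_{r-1} T_{r-1}$, which is immediate from the defining formula. For $j \le r-3$ the commutation relations give $T_j T_{r-1} = T_{r-1} T_j$ and the inductive hypothesis gives $T_j L_{r-1} = L_{r-1} T_j$, so $T_j$ commutes with $L_r$. The delicate case is $j = r-2$: here one substitutes the explicit expression for $L_{r-1}$, applies the braid relation $T_{r-1} T_{r-2} T_{r-1} = T_{r-2} T_{r-1} T_{r-2}$ once on each side, and cancels using $T_{r-1} T_i = T_i T_{r-1}$ for $i \le r-3$; the two words collapse to the same expression. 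This is the step I expect to be the main obstacle, though it is ultimately a bookkeeping computation.

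For proposition part~(1), I would show that $L_1 L_2 \cdots L_r$ commutes with each generator $T_j$ for $1 \le j \le r-1$. Since the formula for $L_k$ involves only $T_1, \ldots, T_{k-1}$, the commutation relations yield $T_j L_k = L_k T_j$ whenever $j \ge k+1$; combined with the previous step, $L_k$ commutes with $T_j$ except possibly when $j \in \{k-1, k\}$. In particular, the elements $L_j$ pairwise commute, since $L_j \in H_j(\qbold) \subseteq H_{k-1}(\qbold)$ for $j < k$. The remaining ingredient is the identity $T_k L_k L_{k+1} = L_k L_{k+1} T_k$, which I would verify by
\begin{equation*}
T_k L_k L_{k+1} = \qbold^{-1} (T_k L_k T_k)(L_k T_k) = L_{k+1} L_k T_k = L_k L_{k+1} T_k,
\end{equation*}
using $T_k L_k T_k = \qbold L_{k+1}$ and the pairwise commutativity. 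Factoring $L_1 \cdots L_r = (L_1 \cdots L_{j-1})(L_j L_{j+1})(L_{j+2} \cdots L_r)$, the outer factors commute with $T_j$ by the local analysis and the middle factor by the displayed identity, giving centrality.

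Finally, the scalar-action clause in \myref{Definition}{defn mult JM}(2) is immediate from centrality: since $H_r^\FF$ is split semisimple by \eqref{diagram semisimplicity}, the central element $L_1 \cdots L_r$ acts as a scalar $d(\lambda) \in \FF$ on each simple module $\cell r \FF \lambda$; and because $L_1 \cdots L_r$ preserves the integral form $\cell r R \lambda$, which is free over $R$, the matrix of its action has entries in $R$, forcing $d(\lambda) \in R$.
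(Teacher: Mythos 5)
Your argument is correct, and every step checks out: the inverse formula $T_i^{-1}=\qbold^{-1}(T_i+1-\qbold)$, the palindromicity giving $L_k^*=L_k$, the recursion $L_r=\qbold^{-1}T_{r-1}L_{r-1}T_{r-1}$, the two-braid-relation computation for the case $j=r-2$ (which indeed reduces to commuting $L_{r-2}$ past $T_{r-1}$), the identity $T_kL_kL_{k+1}=L_kL_{k+1}T_k$, and the three-block factorization giving centrality. The paper, however, offers no proof of its own: it simply cites \cite[Section 3.3]{MR1711316} and stresses that the proof there "is computational and does not depend on deeper results on the Hecke algebras." Your commutation and centrality computations are essentially the ones found in that reference, so the only genuine divergence is in the scalar-action clause of \myref{Definition}{defn mult JM}(2). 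You obtain it abstractly, from centrality plus split semisimplicity of $H_r(\FF,\qbold)$ plus preservation of the free integral form; the cited computational route instead reads the scalar off the dominance/lexicographic-triangular action of the $L_k$ on the Murphy basis, which yields the explicit value $d(\lambda)=\qbold^{\sum_{\alpha\in\lambda}c(\alpha)}$. Your version is shorter and suffices for the definition (which only asks that $d(\lambda)$ exist in $R$), and it is compatible with the paper's "first path," where semisimplicity is established beforehand by the Gram-determinant specialization at $\qbold=1$. But be aware that it is not compatible with the paper's "second path," where semisimplicity is \emph{deduced} from the separation condition, which in turn presupposes that the $L_k$ are JM elements in the sense of \myref{Definition}{defn mult JM}: there your argument would be circular, and one must fall back on the explicit eigenvalue computation. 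It would be worth flagging this dependence, and noting that your proof, unlike the computational one, does not reveal the eigenvalues themselves.
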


See ~\cite[Section 3.3]{MR1711316}  for the proof. Again, the proof is computational and does not depend on deeper results on the Hecke algebras.   We will now list several properties of the Hecke algebras and afterwards sketch two logical routes through this material, both using the 
ideas of \myref{Sections}{section seminormal} and \myrefnospace{}{section JM elements}.

 \begin{enumerate}[label=(H\arabic{*}), ref=H\arabic{*},  series = HeckeAlgebras]
 \item \label{H ss} The Hecke algebras $H_r(\FF, \qbold)$ are split semisimple.
 \item \label{H br}   The branching diagram for the sequence $(H_r(\FF, \qbold))_{r \ge 0}$ of split semisimple algebras is Young's lattice.
 \item \label{H coherent}  The sequence of Hecke algebras over $R$ is restriction coherent, with $\cell {r-1} R {\lambda}$ appearing as a subquotient of $\Res^{H_r(\qbold)}_{H_{r-1}(\qbold)}(\cell r R \mu)$ if and only if $\lambda \subset \mu$. 
 \item \label{H coherent 2} More precisely,  statement (3) of \myref{Theorem}{theorem abstract Murphy basis}    holds.
 \end{enumerate}
 
 Since the sequence $(H_r(\FF, \qbold))_{r \ge 0}$  is a multiplicity free sequence of split semisimple algebras over $\FF$,  one can define Gelfand-Zeitlin idempotents as in \myref{Section}{GZ idempotents} and seminormal bases as in \myref{Definition}{defn seminormal} using the Gelfand-Zeitlin idempotents  and the Murphy basis.

\begin{enumerate}[label=(H\arabic{*}), ref=H\arabic{*},  resume = HeckeAlgebras]
\item \label{H sn}  The set $\{f^\la_\mft \suchthat \mft \in \Std_r(\la) \}$ is a basis of $\cell r \FF \la$ and the transition matrix between this basis and the Murphy basis is dominance unitriangular.
\item  \label{H JM action}   The JM elements act diagonally on the seminormal basis,   $f^\la_\stt L_k =  \qbold^{c_\stt(k)} f^\la_\stt $.
\item  \label{H JM action 2} The JM elements act triangularly on the Murphy basis,
$$
x^\la_\mft L_k = \qbold^{c_\stt(k)} x^\la_\mft  + \sum_{\mfs \rhd \stt} r_\mfs x^\la_\mfs.
$$
\end{enumerate}

We now sketch two logical paths through statements \eqref{H ss}--\eqref{H JM action 2}.  Of course, there are many logical arrangements of this material, and these are just two possibilities.

\medskip
\noindent{\em First path.}   One can first establish that the Hecke algebras $H_r(\FF, \qbold)$ are split semisimple.  One easy way to do this is as follows.   Let $\langle \cdot, \cdot \rangle$ denote the bilinear form on each cell module arising from the Murphy basis.   Let $\phi_\lambda$ be the determinant of the 
Gram matrix $[\langle m^\lambda_\sts,  m^\lambda_\stt \rangle ]_{\sts, \stt}$.  Then $\phi_\lambda \in R = \Z[\qbold, \qbold\inv]$.   Since the specialization of the Hecke algebra at $\qbold = 1$ is the symmetric group algebra over $\mathbb Q$, which is semisimple,  it follows that $\phi_\lambda(1) \ne 0$ and hence
$\phi_\lambda \ne 0$.   Now it follows from the general  theory of cellular algebras that $H_r(\FF, \qbold)$ is split semisimple (see ~\cite[Corollary 2.21]{MR1711316}).

The next step, which is more substantial, is to show that \eqref{H coherent} holds.  This is proved in 
~\cite{GKT-2014} or ~\cite{Mathas:2016}.  This implies that \eqref{H br} holds as well, using 
~\cite[Lemma 2.2]{MR2794027}, which is elementary. 

Taking into account \myref{Proposition}{prop branching factors Hecke}, we now have verified axioms
\eqref{diagram 1} through \eqref{diagram compatibility} for the Hecke algebras, as well as statements (1) and (2) of 
\myref{Theorem}{theorem abstract Murphy basis}.  A subtle point here is that we have not verified and do not need to verify at this point  that the $d$--branching coefficients actually arise from the cell filtrations of restricted cell modules, i.e. that statement (3) of \myref{Theorem}{theorem abstract Murphy basis} holds.

We are now entitled to plug into the arguments of \myref{Sections}{section seminormal} and \myrefnospace{}{section JM elements}, which give us the conclusions \eqref{H sn} through 
\eqref{H JM action 2}, but without revealing that the eigenvalues of the JM elements are $\qbold^{c_\stt(k)}$.    This additional information must come from an additional analysis, for example by extending the analysis of Okounkov and Vershik  ~\cite{MR1443185} to the Hecke algebras.   Moreover, we obtain 
\eqref{H coherent 2} from \myref{Corollary}{restriction rule for Murphy basis}.

\medskip
\noindent{\em Second path.}  The second path  follows  the analysis in 
~\cite[Chapter 3]{MR1711316} and its abstraction in ~\cite{MR2414949}.    Start with ~\cite[Theorem 3.32]{MR1711316}, which proves \eqref{H JM action 2}.    This implies that the JM elements $L_k$  are Jucys--Murphy elements in the sense of ~\cite{MR2414949} and moreover satisfy the ``separating condition" of 
~\cite[Section 3]{MR2414949} over $\FF$.   One can therefore define idempotents $F_\stt$ indexed by standard tableaux using interpolation formulas involving the JM elements as in ~\cite[Section 3.3]{MR1711316} or ~\cite[Section 3]{MR2414949}, and the seminormal basis of the cell modules by 
$f^\lambda_\stt = x^\lambda_\stt F_\stt$.  From the general theory in ~\cite[Section 3]{MR2414949} one obtains \eqref{H sn} and \eqref{H JM action}.   Moreover, the separating condition also implies \eqref{H ss}.  The restriction rule \eqref{H br}  results from actually computing the seminormal representations as in 
~\cite[Theorem 3.36]{MR1711316}.  We are now left with the task of verifying \eqref{H coherent} and 
\eqref{H coherent 2}.

Note that we have \eqref{diagram 1} --\eqref{diagram semisimplicity} as well as  \eqref{diagram compatibility} at our disposal as well as statements (1) and (2) of  \myref{Theorem}{theorem abstract Murphy basis}.  Moreover, by ~\cite[Proposition 3.11]{MR2774622}  the idempotents $F_\stt$ obtained from the JM elements have to coincide with the Gelfand-Zeitlin idempotents.   This is all we need to follow through the arguments of \myref{Section}{section seminormal}, and we end up with 
\myref{Corollary}{restriction rule for Murphy basis}, which implies \eqref{H coherent} and 
\eqref{H coherent 2}.

\begin{rmk}
The net result of this discussion is that if one uses the result from the literature on the restriction coherence of the tower of Hecke algebras, one can avoid some of the work involved with showing dominance triangularity of the JM elements;  and on the other hand, if one uses the results on dominance triangularity, one can avoid some of  the work involved in proving restriction coherence (which was only recently proven in  \cite{GKT-2014,Mathas:2016}). 
\end{rmk}

\section{Skew cell modules for diagram algebras}\label{sec:mainresult}
 In this section, we construct   skew cell modules for diagram algebras 
and provide integral bases of these modules indexed by skew tableaux. 
  We begin by constructing skew cell modules for $A_r \cap A_s'$  when $0 \le s \le r$, and afterwards introduce a final axiom which allow us to view these modules as $A_{r-s}$   modules.

Let $0 \le s < r$,  $\nu \in \widehat A_r$, and $\lambda \in \widehat A_s$.   Let $\stt^\lambda  \in \Std_s(\lambda)$  be maximal in $\Std_s(\lambda)$ with respect to the dominance order on paths.  Define
$$
\Delta(\nu; \rhd \lambda) = \Span_R\{ m^\nu_\stt \suchthat  \stt \in \Std_r(\nu) \text{ and } \stt(s) \rhd \lambda\},
$$  
and 
$$
\Delta(\nu; \stt^\lambda) = \Span_R\{ m^\nu_{\stt^\lambda \circ \stt }\suchthat  \stt \in \Std_{r-s}(\nu \setminus \lambda) \},
$$
both $R$-submodules 
 of $\cell r R \nu$.

  \begin{lem}  \label{skew lemma 1}
  $ \Delta(\nu; \rhd \lambda)$    and 
  $  \Delta(\nu; \stt^\lambda)+ \Delta(\nu; \rhd \lambda)$  are  $A_r \cap A_s'$  submodules of $\cell r R \nu$.  
 \end{lem}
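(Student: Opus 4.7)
The plan is to deduce both statements directly from \myref{Proposition}{Murphy restriction to commutant 1}, which controls how elements of $A_r \cap A_s'$ act on Murphy basis elements modulo $A_r^{\rhd \nu}$. Passing to the cell module $\cell r R \nu$, that proposition says: for $x \in A_r \cap A_s'$ and $\mft \in \Std_r(\nu)$,
\begin{equation*}
m^\nu_\mft x \;=\; \sum_{\begin{subarray}c \stw \in \Std_r(\nu) \\ \stw_{[0,s]} \,\unrhd\, \mft_{[0,s]} \end{subarray}} r_\stw\, m^\nu_\stw,
\end{equation*}
with coefficients $r_\stw \in R$. So in each case I only need to check that every $\stw$ appearing on the right-hand side labels a basis vector belonging to the claimed submodule.

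For $\Delta(\nu; \rhd \lambda)$: suppose $\mft(s) \rhd \lambda$. Any $\stw$ appearing above satisfies $\stw(s) \unrhd \mft(s) \rhd \lambda$ (since dominance of paths implies dominance at each level), hence $\stw(s) \rhd \lambda$ and $m^\nu_\stw \in \Delta(\nu; \rhd \lambda)$. This shows $\Delta(\nu; \rhd \lambda)$ is stable under $A_r \cap A_s'$.

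For $\Delta(\nu; \stt^\lambda) + \Delta(\nu; \rhd \lambda)$: it suffices to show the action carries each $m^\nu_{\stt^\lambda \circ \stt'}$, with $\stt' \in \Std_{s,r}(\nu \setminus \lambda)$, back into this sum. Apply the display above with $\mft = \stt^\lambda \circ \stt'$, so $\mft_{[0,s]} = \stt^\lambda$. For each $\stw$ in the sum we have $\stw_{[0,s]} \unrhd \stt^\lambda$, and in particular $\stw(s) \unrhd \lambda$. Split into two cases. If $\stw(s) \rhd \lambda$, then $m^\nu_\stw \in \Delta(\nu; \rhd \lambda)$. If instead $\stw(s) = \lambda$, then $\stw_{[0,s]} \in \Std_s(\lambda)$ and $\stw_{[0,s]} \unrhd \stt^\lambda$; by the chosen maximality of $\stt^\lambda$ in $\Std_s(\lambda)$ this forces $\stw_{[0,s]} = \stt^\lambda$, so $\stw = \stt^\lambda \circ \stw_{[s,r]}$ and $m^\nu_\stw \in \Delta(\nu; \stt^\lambda)$. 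Either way $m^\nu_\stw$ lies in $\Delta(\nu; \stt^\lambda) + \Delta(\nu; \rhd \lambda)$, completing the proof.

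There is no real obstacle here: the substantive content is packed into \myref{Proposition}{Murphy restriction to commutant 1}, and the only thing to be careful about is the fact that the maximality of $\stt^\lambda$ is invoked at the level of paths in $\Std_s(\lambda)$ rather than just at the endpoint; the argument is sensitive to the choice of $\stt^\lambda$ as a \emph{maximal} element of $\Std_s(\lambda)$, and would fail for an arbitrary representative.
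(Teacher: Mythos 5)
Your proof is correct and follows exactly the route the paper intends: the paper's own proof is the one-line "Follows from Proposition \ref{Murphy restriction to commutant 1} and maximality of $\stt^\lambda$," and you have simply filled in the details, correctly using that path dominance $\stw_{[0,s]} \unrhd \mft_{[0,s]}$ gives $\stw(s) \unrhd \mft(s)$ and that maximality of $\stt^\lambda$ in $\Std_s(\lambda)$ forces $\stw_{[0,s]} = \stt^\lambda$ in the borderline case $\stw(s)=\lambda$. No gaps.
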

 
 \begin{proof}  Follows from \myref{Proposition}{Murphy restriction to commutant 1}  and maximality of $\stt^\lambda$.  
 \end{proof}
 
Let $ \cell {} {} {\nu \setminus \lambda}  $ denote   the  $A_r \cap A_s'$  module
 $$
 \cell {} {} {\nu \setminus \lambda} =  ( \Delta(\nu; \stt^\lambda)+ \Delta(\nu; \rhd \lambda))/ \Delta(\nu; \rhd \lambda).
 $$

 \begin{lem} \label{skew lemma 2} \mbox{}
 \begin{enumerate}[label=(\arabic{*}), font=\normalfont, align=left, leftmargin=*]
 \item   $\cell {} {} {\nu \setminus \lambda}$ is, up to isomorphism, independent of the choice of the maximal element $\stt^\lambda$. 
 \item   $\Delta(\nu; \stt^\lambda) F_{\stt^\lambda} \subseteq \cell r \FF \nu$  is an $A_r \cap A_s'$ submodule, and 
  $\cell {} {} {\nu \setminus \lambda} \cong \Delta(\nu; \stt^\lambda) F_{\stt^\lambda} $.
  \item For any choice of $\sts \in \Std_s(\lambda)$,   
  $$ \cell {} {\FF} {\nu \setminus \lambda}  :=
  \cell {} {} {\nu \setminus \lambda} \otimes_R \FF  \cong
  \cell r \FF \nu F_\sts
  $$
  as $A_r^\FF \cap (A_s^\FF)'$--modules. 
 \end{enumerate}
 \end{lem}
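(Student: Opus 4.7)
The plan is to prove part (2) first, then use it to derive (3), and finally deduce (1) from the preceding parts. Throughout, the key tool is to work in the seminormal basis, exploiting dominance triangularity (\myref{Theorem}{dominance triangularity}) and the multiplication rules of \myref{Proposition}{restriction seminormal} and \myref{Lemma}{seminormal form}.

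For (2), I would analyze the $R$-linear map
$$
\varphi : \Delta(\nu;\stt^\lambda) + \Delta(\nu;\rhd\lambda) \to \cell r \FF \nu,\qquad x \mapsto x F_{\stt^\lambda}.
$$
Expanding each Murphy basis element in the seminormal basis, and noting that $f^\nu_\mfu F_{\stt^\lambda}$ vanishes unless $\mfu(s) = \lambda$ and $\mfu_{[0,s]} = \stt^\lambda$, one sees that $\varphi$ annihilates $\Delta(\nu;\rhd\lambda)$ and that
$$
m^\nu_{\stt^\lambda \circ \stu}\, F_{\stt^\lambda} = f^\nu_{\stt^\lambda \circ \stu} + \sum_{\stv \rhd \stu} c_\stv\, f^\nu_{\stt^\lambda \circ \stv},
$$
where the maximality of $\stt^\lambda$ in $\Std_s(\lambda)$ eliminates all other surviving terms. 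This dominance triangular expansion forces $\FF$-linear independence, so the induced map $\bar\varphi$ is an $R$-linear isomorphism from $\cell{}{}{\nu \setminus \lambda}$ onto $\Delta(\nu;\stt^\lambda) F_{\stt^\lambda}$. Since $F_{\stt^\lambda} \in A_s^\FF$ commutes with $A_r \cap A_s'$, the map $\bar\varphi$ is $A_r \cap A_s'$-equivariant and its image is an $A_r \cap A_s'$-submodule.

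For (3), extending (2) to $\FF$ shows that $\cell{}{\FF}{\nu \setminus \lambda}$ coincides with the $\FF$-span of $\{f^\nu_{\stt^\lambda \circ \stt}\}_\stt$, which by \myref{Proposition}{restriction seminormal}(2) equals $\cell r \FF \nu F_{\stt^\lambda}$. For a general $\sts \in \Std_s(\lambda)$, right multiplication by the matrix unit $E^\lambda_{\stt^\lambda \sts}$ of \myref{Lemma}{seminormal form}(5) -- an element of $A_s^\FF$, which therefore commutes with $A_r^\FF \cap (A_s^\FF)'$ -- sends $f^\nu_{\stt^\lambda \circ \stt}$ to $f^\nu_{\sts \circ \stt}$ and supplies the required $A_r^\FF \cap (A_s^\FF)'$-isomorphism onto $\cell r \FF \nu F_{\sts}$.

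For (1), given two maximal elements $\stt^\lambda, \tilde\stt^\lambda$ of $\Std_s(\lambda)$, I would verify that the natural bijection $m^\nu_{\stt^\lambda \circ \stt} F_{\stt^\lambda} \leftrightarrow m^\nu_{\tilde\stt^\lambda \circ \stt} F_{\tilde\stt^\lambda}$ between the Murphy-type bases of the two $R$-forms gives an $A_r \cap A_s'$-equivariant $R$-module isomorphism. The key inputs are \myref{Proposition}{Murphy restriction to commutant 1} to control the $A_r \cap A_s'$-action in each module and \myref{Proposition}{restriction seminormal}(3) to conclude that, at the seminormal level, the structure constants depend only on the truncation in $\Std_{s,r}(\nu \setminus \lambda)$. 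The principal difficulty lies precisely here: upgrading the immediate $\FF$-isomorphism supplied by (3) to an $R$-level isomorphism requires comparing Murphy-to-seminormal transition coefficients under the two choices of maximal tableau, and this comparison rests essentially on the factorizability \eqref{eqn factorization of u t} of the $u^*_\stt$ representatives together with the fact that maximality confines the truncation to the $\Std_{s,r}(\nu \setminus \lambda)$-component of the path.
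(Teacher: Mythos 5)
Your parts (2) and (3) run along the same lines as the paper's proof: the map $x\mapsto xF_{\stt^\lambda}$ annihilates $\Delta(\nu;\rhd\lambda)$ and is injective on the quotient by dominance unitriangularity, and the passage to an arbitrary $\sts\in\Std_s(\la)$ in (3) is effected by right multiplication by $E^\la_{\stt^\la\sts}$, which commutes with $A_r^\FF\cap(A_s^\FF)'$. These parts are fine, and your explicit triangular expansion of $m^\nu_{\stt^\la\circ\stu}F_{\stt^\la}$ is the right way to see that the kernel is exactly $\Delta(\nu;\rhd\lambda)$.

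Part (1) is where you have a genuine gap, and you flag it yourself without closing it. Defining the map by matching Murphy-type basis elements and then ``verifying'' equivariance is circular: the verification is precisely the claim that the structure constants are independent of the choice of maximal tableau, and \myref{Proposition}{restriction seminormal}(3) does not deliver this directly because $m^\nu_{\stt^\la\circ\stu}F_{\stt^\la}$ is not a seminormal vector --- its expansion coefficients $c_\stv$ could a priori depend on $\stt^\la$. The paper sidesteps the comparison entirely: having identified $\cell {} {} {\nu\setminus\la}$ with $\Delta(\nu;\stt_i)F_{\stt_i}$ via part (2), it takes the isomorphism to be right multiplication by the matrix unit $E^\la_{\stt_1\stt_2}$, for which $A_r\cap A_s'$--equivariance is automatic. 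The only thing left to check is that this map carries the integral form $\Delta(\nu;\stt_1)F_{\stt_1}$ onto $\Delta(\nu;\stt_2)F_{\stt_2}$, and here maximality does concrete work: by \myref{Theorem}{dominance triangularity} a maximal path admits no dominating paths, so $m^\la_{\stt_i}=f^\la_{\stt_i}$, whence $u^*_{\stt_1}E^\la_{\stt_1\stt_2}\equiv u^*_{\stt_2}$ modulo $\c_\la A_s^\FF\cap (A_s^\FF)^{\rhd\la}$. Combining this with the factorization $u^*_{\stt_i\circ\stu}=u^*_\stu u^*_{\stt_i}$ from \eqref{eqn factorization of u t}, the error terms land in $\Delta^\FF(\nu;\rhd\la)$ and are killed by $F_{\stt_2}$, giving the exact basis-to-basis identity $m^\nu_{\stt_1\circ\stu}F_{\stt_1}E^\la_{\stt_1\stt_2}=m^\nu_{\stt_2\circ\stu}F_{\stt_2}$. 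This identity is also exactly what would rescue your formulation, since it exhibits your proposed bijection as right multiplication by an algebra element; without it, your part (1) remains an unproved assertion.
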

 
 \begin{proof}  The first statement in part (2) follows from \myref{Lemma}{skew lemma 1}, because
  $ \Delta(\nu; \rhd \lambda) F_{\stt^\lambda}=0$.   Now we have an $A_r \cap A_s'$ module homomorphism $m \mapsto m F_{\stt^\lambda}$ from 
  $ \Delta(\nu; \stt^\lambda)+ \Delta(\nu; \rhd \lambda)$ to   $\Delta(\nu; \stt^\lambda) F_{\stt^\lambda} $
  with kernel $ \Delta(\nu; \rhd \lambda)$, which gives the isomorphism in part (2).    If $\stt_1$  and $\stt_2$  are two maximal elements in $\Std_s(\lambda)$,  then right multiplication by $E_{\stt_1 \stt_2}^\lambda  $  is an
  $A_r \cap A_s'$--module isomorphism from $\Delta(\nu; \stt_1) F_{\stt_1}$  to $\Delta(\nu; \stt_2) F_{\stt_2}$, which proves part (1).

 For part (3),   using
   \myref{Theorem}{dominance triangularity}    and  \myref{Proposition}{restriction seminormal} part (3),
      first note that given any
       $\sts,\bar\sts \in \Std_s(\lambda)$, we have that
  $\cell r \FF \nu {F_\sts } \cong \cell r \FF \nu {F_{\bar \sts}}$
as $R$--modules, with the isomorphism realized by right multiplication by $E_{\sts {\bar \sts}}^\lambda $.
This map preserves the   $A_r^\FF \cap (A_s^\FF)'$--module structure, since   $A_r^\FF \cap (A_s^\FF)'$ commutes with $E_{\sts {\bar \sts}}^\lambda$. 
In particular, we can set $\sts = \stt^\lambda$ without loss of generality.    Now, it follows from
\myref{Theorem}{dominance triangularity} that 
$$
\Delta^\FF(\nu; \rhd \lambda) := 
\Delta(\nu; \rhd \lambda)  \otimes_R \FF = \Span_\FF\{f_\stt^\nu \suchthat \stt \in \Std_r(\nu) \text{ and }
\stt(s) \rhd \lambda \}, 
$$
 and 
 $$ \Delta(\nu; \stt^\lambda) \otimes_R \FF + \Delta^\FF(\nu; \rhd \lambda)  =
\Span_\FF\{f_{\stt^\lambda \circ \stt}^\nu \suchthat \stt \in \Std_{r-s}(\nu \setminus \lambda)  \} + \Delta^\FF(\nu; \rhd \lambda).
$$
Arguing as for part (2), we have 
$$\cell {} {\FF} {\nu \setminus \lambda} \cong 
\Span_\FF\{f_{\stt^\lambda \circ \stt}^\nu \suchthat \stt \in \Std_{r-s}(\nu \setminus \lambda)  \} F_{\stt^\lambda}.
 $$
 Now one easily verifies  that $\Span_\FF\{f_{\stt^\lambda \circ \stt}^\nu \suchthat \stt \in \Std_{r-s}(\nu \setminus \lambda)  \} F_{\stt^\lambda} = \cell r \FF \nu F_{\stt^\lambda}$.  
 \end{proof}

  Now we introduce  one final axiom for towers of diagram algebras, which allows us to regard
  $\cell {} {} {\nu \setminus \lambda}$ as an $A_{r-s}$--module.
  
 \medskip
\begin{enumerate}[label=(D\arabic{*}), ref=D\arabic{*},  resume = DiagramAlgebras]
 \item  \label{subalgebra}  \label{diagram last}
  There is a an automorphism, $f_r$,   of order 2 of each $A_r $
   such that for each $s$ with $0 \le s  \le r$, 
   $f_r(A_{r-s})$ commutes with $A_{s} \subset A_r$.  
   \end{enumerate}
 \medskip
 
 Using this final axiom, we can define a homomorphism $\varphi_{r, s}: A_s \otimes_R A_{r-s} \to A_r$ by
 $$\varphi_{r, s}( a \otimes b) =  a f_r (f_{r-s}(b)),$$  where both $A_s$ and $A_{r-s}$ are regarded as subalgebras of $A_r$ via the usual embeddings.  We can restrict any $A_r$--module to  $A_s \otimes_R A_{r-s}$, by composing with $\varphi_{r, s}$.  

 \begin{rmk}
Condition \eqref{subalgebra} is satisfied by the group algebras of symmetric groups, the Hecke algebras,  and the Brauer, BMW, partition, and Temperley--Lieb algebras.  For each of these examples, 
 the involution $f_r$ is given by flipping a diagram or tangle through its vertical axis (whereas $\ast$ is given by flipping a diagram through its horizontal axis).    Also, in each of these genuine diagram or tangle algebras, one can define a tensor product operation, i.e. a homomorphism from $A_s \otimes_R A_{r-s}$ to 
$A_r$, which, on the level of diagrams or tangles, is just placing diagrams side by side.  Taking $f_r$ to be the flip through a vertical axis,  the homomorphism $\varphi_{r,s}$ defined above agrees with the homomorphism determined by placing diagrams side by side.   Note that $f_r\circ f_{r-s}$ is the shift operation on $A_{r-s}$ determined on the level of diagrams by adding $s$ vertical strands to the left of a diagram in $A_{r-s}$.
  \end{rmk}

Denote by  $\cell {r-s} {} {\nu \setminus \lambda}$ the $A_r \cap A_s'$ module   $\cell {}{}{\nu\setminus\lambda}$, when regarded as   an $A_{r-s}$--module by composing with the
homomorphism $f_r \circ f_{r-s} : A_{r-s} \to A_r \cap A_s'$.

 \begin{defn} \label{defn skew cell module}  
 Let $(A_r)_{r\geq 0}$ denote a tower of algebras satisfying conditions \eqref{diagram 1} -- 
 \eqref{subalgebra}.
Given $\la \in \widehat{A}_{s}$ and   $\nu \in \widehat{A}_{r}$,   the $A_{r-s}$--module 
$\Delta_{r-s}^R(\nu\setminus\lambda)$ described above is called the {\sf skew cell module} associated to $\lambda$ and $\nu$. 
 \end{defn}

 \newcommand{\coldomeq}{\unrhd_{\rm col}}
\newcommand{\coldom}{\rhd_{\rm col}}

 \begin{rmk}
 The definition of skew cell modules generalizes the definition of skew Specht modules for the tower of symmetric group algebras $(\ZZ \mathfrak S_r)_{r \ge 0}$.   The skew Specht module $S^{\nu\setminus \lambda}$ for a skew shape $  \nu \setminus\lambda$ is defined as the span of polytabloids of shape $ \nu \setminus\lambda$;  see  \cite[Section 4]{MR513828} for the definition of polytabloids, and \cite{MR528580} for the definition of skew Specht modules.   Let us regard $S^{\nu \setminus \lambda}$ as an $\mathfrak S_{\{s+1, \dots, r\}}$--module, where $|\lambda| = s$ and $|\nu| = r$.   Let $\stt_\lambda$ be the column reading standard tableau of shape $\lambda$, so $\stt_\lambda$ is the unique maximal tableau in column dominance order 
 $\coldomeq$ on standard tableaux of shape $\lambda$.  Consider the following subsets of  the Specht module $S^\nu$:
 $$
 S(\nu; \coldom \lambda) = \Span_\ZZ\{e_\stt \suchthat \stt \in \Std_r(\nu) \text{ and } \stt(s) \coldom \lambda\}
 $$
 and
 $$
 S(\nu; \stt_\lambda) = \Span_\ZZ\{e_{\stt_\lambda \circ \sts} \suchthat \sts \in \Std_{r-s}(\nu\setminus \lambda) \},
  $$
 where $e_\stt$ denotes a polytabloid.
 Using the Garnir relations \cite{MR513828}, one can verify that $ S(\nu; \coldom \lambda)  $  and 
 $S(\nu; \stt_\lambda)  +  S(\nu; \coldom \lambda)   $  
 are $\mathfrak S_{\{s+1, \dots, r\}}$ submodules of $S^\nu$, and the quotient of these modules is isomorphic to the skew cell module $S^{\nu \setminus \lambda}$.  A more general statement is proved in 
\cite[Theorem 3.1]{MR528580}.  Modulo the identification of cell modules of $\mathfrak S_r$ with classical Specht modules -- which involves both transpose of diagrams and twisting by the automorphism $s_i  \mapsto -s_i$, see \cite[Section 5]{MR1327362} -- this shows that the classical skew Specht modules agree with the skew cell modules defined here. 
 \end{rmk}

\begin{prop}\label{fdsakhjasldfhlhsdfsdfhl}
Given $\la \in \widehat{A}_{s}$, 
$\nu \in \widehat{A}_{r}$,   and an $A^\FF_{r-s}$--module $M$, 
 we have  
 \begin{align*}
 \Hom_{A^\FF_{ s}\otimes A^\FF_{r-s}}
(\Delta_{s}^\FF(\lambda) \otimes_\FF M, 
\Res^{A^\FF_{r}}_{A^\FF_{s}\otimes A^\FF_{r-s}}(\Delta_r^\FF(\nu ) )
 \cong 
\Hom_{A^\FF_{r-s}}
(M, \Delta_{r-s}^\FF(\nu\setminus\lambda)). 
  \end{align*}
\end{prop}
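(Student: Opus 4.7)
My plan is to obtain the isomorphism via the standard Hom--tensor adjunction for modules over a product of semisimple algebras, combined with the explicit realization of the skew cell module given in \myref{Lemma}{skew lemma 2}. The split semisimplicity of $A^\FF_s$ and $A^\FF_{r-s}$ (axiom \eqref{diagram semisimplicity}) will be essential, since it lets us treat $\Delta^\FF_s(\lambda)$ as a simple projective $A^\FF_s$-module and lets us transfer between $\Hom$ spaces and isotypic components.

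First I would fix any $\sts \in \Std_s(\lambda)$ and recall from \myref{Lemma}{skew lemma 2}(3) the isomorphism of $A^\FF_{r-s}$-modules
$$
\Delta^\FF_{r-s}(\nu \setminus \lambda) \ \cong\ \Delta^\FF_r(\nu)\, F_\sts,
$$
where the right-hand side is regarded as an $A^\FF_{r-s}$-module through the map $f_r \circ f_{r-s}$ of axiom \eqref{subalgebra}. The central step of the proof is then to identify
$$
\Hom_{A^\FF_s}\bigl(\Delta^\FF_s(\lambda),\ \Res^{A^\FF_r}_{A^\FF_s}\Delta^\FF_r(\nu)\bigr)\ \cong\ \Delta^\FF_r(\nu)\,F_\sts
$$
via the evaluation map $\phi \mapsto \phi(f_\sts^\lambda)$. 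Since \myref{Lemma}{seminormal form}(5) yields $f_\sts^\lambda E^\lambda_{\sts\stt} = f_\stt^\lambda$ for every $\stt \in \Std_s(\lambda)$, the module $\Delta^\FF_s(\lambda)$ is cyclic on $f_\sts^\lambda$, so evaluation is injective; the relation $f_\sts^\lambda F_\sts = f_\sts^\lambda$ (a consequence of \myref{Lemma}{seminormal form}(1) and (4)) together with semisimplicity shows that every $v \in \Delta^\FF_r(\nu) F_\sts$ arises as $\phi(f_\sts^\lambda)$ for a well-defined $A^\FF_s$-map $\phi$, so the map is also surjective. Because the images of $A^\FF_s$ and $f_r(f_{r-s}(A^\FF_{r-s}))$ commute inside $A^\FF_r$, the Hom space carries an $A^\FF_{r-s}$-action by post-composition, and the evaluation map is $A^\FF_{r-s}$-equivariant by inspection.

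With these identifications in hand, I would close the argument by invoking the standard adjunction
$$
\Hom_{A^\FF_s \otimes A^\FF_{r-s}}\!\bigl(\Delta^\FF_s(\lambda) \otimes_\FF M,\ N\bigr)
\ \cong\
\Hom_{A^\FF_{r-s}}\!\bigl(M,\ \Hom_{A^\FF_s}(\Delta^\FF_s(\lambda),\ N)\bigr)
$$
with $N = \Res^{A^\FF_r}_{A^\FF_s \otimes A^\FF_{r-s}}\Delta^\FF_r(\nu)$, and then composing with the two isomorphisms above. The main obstacle is purely bookkeeping: keeping track of the various module structures, and verifying in particular that the $A^\FF_{r-s}$-action induced on $\Hom_{A^\FF_s}(\Delta^\FF_s(\lambda), N)$ by post-composition matches, under evaluation at $f_\sts^\lambda$, the twisted action on $\Delta^\FF_r(\nu)F_\sts$ coming from the embedding $f_r\circ f_{r-s}$ used to define the skew cell module. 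Once this compatibility is in place, no further computation is needed.
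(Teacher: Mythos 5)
Your proposal is correct and takes essentially the same route as the paper: the paper's proof is exactly the composite of your two isomorphisms, sending $\varphi$ to $m \mapsto \varphi(f^\lambda_{\stt^\lambda}\otimes m)$, with injectivity coming from $f^\lambda_{\stt^\lambda}A_s^\FF = \Delta_s^\FF(\lambda)$ and surjectivity from $f^\lambda_{\stt^\lambda}x = 0 \Leftrightarrow F_{\stt^\lambda}x = 0$. The only cosmetic difference is that you factor the map explicitly through the tensor--hom adjunction before identifying $\Hom_{A_s^\FF}(\Delta_s^\FF(\lambda), N)$ with $N F_\sts$, whereas the paper verifies bijectivity of the combined evaluation map directly.
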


\begin{proof}  Identify $\Delta_{r-s}^\FF(\nu\setminus\lambda)$ with $\Delta^\FF_r(\nu) F_{\stt^\lambda}$, using  \myref{Lemma}{skew lemma 2} part (2).  
For $\varphi \in  \Hom_{A^\FF_{ s}\otimes A^\FF_{r-s}}
(\Delta_{s}^\FF(\lambda) \otimes_\FF M, 
\Res^{A^\FF_{r}}_{A^\FF_{s}\otimes A^\FF_{r-s}}(\Delta_r^\FF(\nu ) )$, define
$\overline \varphi$ by 
$$\overline \varphi(m) = \varphi(f^\lambda_{\stt^\lambda}\otimes m) = \varphi(f^\lambda_{\stt^\lambda}F_{\stt^\lambda}\otimes m)  = \varphi(f^\lambda_{\stt^\lambda}\otimes m) F_{\stt^\lambda}.
$$
It follows 
 that
 $\overline \varphi \in \Hom_{A^\FF_{r-s}}
(M, \Delta_{r-s}^\FF(\nu\setminus\lambda))$. 
  We have to check that  the map $\varphi \mapsto \overline \varphi$ is an isomorphism.  For injectivity, suppose $  
   \overline\varphi = 0
  $.  
   Then for all $m \in M$ and all
$x \in A_s^\FF$,   
$ \varphi(f^\lambda_{\stt^\lambda}x \otimes m) =  \overline \varphi(m) x = 0.$  
 Hence $ 
 \varphi = 0$  since    
 $f^\lambda_{\stt^\lambda} A_s^\FF =  \cell s \FF \lambda$.
 For surjectivity, 
let $\psi \in 
\Hom_{A^\FF_{r-s}}
(M, \Delta_{r}^\FF(\nu) F_{\stt^\lambda})$.  Define $\varphi$ by
$\varphi(f^\lambda_{\stt^\lambda} x \otimes m) = \psi(m) x = \psi(m) F_{\stt^\lambda}x $ for 
$x \in A_s^\FF$.    Then $\varphi$ is well-defined because $f^\lambda_{\stt^\lambda} x = 0 \Leftrightarrow F_{\stt^\lambda} x = 0$.    Now one can easily check that $\varphi$ is an $A_s^\FF \otimes A_{r-s}^\FF$--homomorphism and that $\overline \varphi = \psi$.  
    \end{proof}

Finally, let $(A_r)_{r\geq0}$  denote a  tower of algebras  satisfying \eqref{diagram 1}--\eqref{subalgebra}.  
 Let $\la \in \widehat{A}_s$, $\nu \in \widehat{A}_r$, $\mu \in \widehat{A}_{r-s}$ and define associated multiplicities 
\begin{align}\label{a defintion of sorts} 
A_{\lambda,\mu}^\nu = 
\dim_{\mathbb{\QQ}}
{\rm Hom}_{{A}^\QQ_{r-s}}
(\c_\mu {A^\QQ_{r-s}}  , \Delta^\QQ_{r-s}(\nu \setminus\lambda) )
\end{align}
\begin{align}\label{a defintion of sorts2} 
a_{\lambda,\mu}^\nu = \dim_{\mathbb{\QQ}}
{\rm Hom}_{{A}^\QQ_{r-s}}
(\Delta^\QQ_{r-s}(\mu) , \Delta^\QQ_{r-s}(\nu \setminus\lambda) ).
\end{align}
 We recall that $(\c_\mu  A _{r-s}  / (\c_\mu {A _{r-s}} \cap  A _{r-s} ^{\rhd \mu} )$ is isomorphic to the cell module $\Delta_{s}(\mu)$ and therefore $a_{\lambda,\mu}^\nu \leq A_{\lambda,\mu}^\nu$, by definition.

    \begin{rmk}
Consider the tower of the group algebras of symmetric groups $(\mathfrak{S}_r)_{r\geq0}$.  
 Recall that a {\sf partition}    $\mu $  is defined to be a finite weakly decreasing sequence   of non-negative integers. 
 We define the {\sf degree} of the partition $\la$ to be the sum,   $|\mu|$, over all non-zero terms in this sequence. 
 Recall  that $\widehat{\mathfrak{S} }_{r-s}$ is the set of partitions of degree $r-s$, see \cite{EG:2012}.  
 Given $\mu$ a partition of $r-s$,
  the module $\c_\mu  \mathfrak{S}^\QQ_{r-s} $ is isomorphic to the so-called Young permutation module, that is the module 
   obtained by induction from the subgroup $\mathfrak{S}_\mu$ 
 which stabilizes the set $\{1,\dots ,\mu_1  \}\times \{\mu_1+1,\dots ,\mu_1+\mu_2  \}\times \dots  \subseteq \{1,\dots,r-s\}$.  
Therefore, the coefficients defined in \eqref{a defintion of sorts} and \eqref{a defintion of sorts2} above are the skew--Kostka and Littlewood--Richardson coefficients, respectively \cite[Pages 311 and 338]{MR1676282}.      
    \end{rmk}
    
    \subsection{The stable Kronecker coefficients }
 Given
  $\lambda  =(\la_1,\la_2,\dots, \la_\ell)$ a partition and $n\in \mathbb{N}$ sufficiently large  we  set 
  $$\lambda_{[n]}=(n-|\lambda|, \lambda_1,\lambda_2, \ldots,\lambda_{\ell}).$$  
Given $n\in \mathbb{N}$, we recall that $\mathfrak{S}_n$ denotes the symmetric group on $n$ letters.   
The tower of  algebras $(\ZZ \mathfrak{S}_{n})_{n \geq 0}$  satisfy conditions 
\eqref{diagram 1}--\eqref{diagram last}.  
 Given $r  \in \tfrac{1}{2}\mathbb{N}$, we let $P_{2r}(n)$ denote the partition algebra  on $r$ strands with parameter $n \in \mathbb{N}$.  
The tower of  algebras $( P_{r}(n))_{r \geq 0}$  satisfy conditions 
\eqref{diagram 1}--\eqref{diagram last}.  
 The representation theories of the symmetric groups and partition algebras  are intimately related via a generalizations of   classical Schur--Weyl duality.  
 Through this duality, we obtain the following theorem.  

\begin{thm}\label{thm}
    Let 
$\lambda $ be a partition of degree $ s$, $\mu$ be  a partition of degree $r-s$, and 
$\nu$ be   a partition of degree less than or  equal to 
$ r $.    We have that
 \begin{align*}
 &\Hom_{\mathbb{Q}\Sym_n}
  (\Delta_{\mathbb{Q}\mathfrak{S}_n} (\lambda_{[n]})\otimes \Delta_{\mathbb{Q}\mathfrak{S}_n} (\mu_{[n]})
  ,\Delta_{\mathbb{Q}\mathfrak{S}_n} (\nu_{[n]}))   
 \\ \cong 
 &\Hom_{{P_{r-s}^{\mathbb{Q}}(n)}}
(\Delta_{P_{r-s}^{\mathbb{Q}}(n)}(\mu), \Delta_{P_{r-s}^{\mathbb{Q}}(n)}(\nu\setminus\lambda)).
  \end{align*}
  for $n$ sufficiently large ($n\geq 2r$ will suffice).  
 \end{thm}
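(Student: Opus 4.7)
The plan is to combine the Schur--Weyl duality for the partition algebra with \myref{Proposition}{fdsakhjasldfhlhsdfsdfhl}.  For $n\geq 2r$, the algebra $P_r^{\mathbb{Q}}(n)$ is split semisimple and the actions of $\mathbb{Q}\Sym_n$ and $P_r^{\mathbb{Q}}(n)$ on the tensor power $V^{\otimes r}$ of the $n$--dimensional permutation module $V$ generate each other's commutants; Schur--Weyl then yields the bimodule decomposition
$$
V^{\otimes r} \;\cong\; \bigoplus_{\nu:\,|\nu|\leq r} \Delta_{\mathbb{Q}\Sym_n}(\nu_{[n]}) \otimes_{\mathbb{Q}} \Delta_{P_r^{\mathbb{Q}}(n)}(\nu).
$$

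First I would exploit the identification $V^{\otimes r} = V^{\otimes s} \otimes_{\mathbb{Q}} V^{\otimes (r-s)}$, which is equivariant for the diagonal $\mathbb{Q}\Sym_n$--action and for the action of $P_s^{\mathbb{Q}}(n) \otimes P_{r-s}^{\mathbb{Q}}(n)$ embedded in $P_r^{\mathbb{Q}}(n)$ via the homomorphism $\varphi_{r,s}$ from axiom \eqref{subalgebra}, which on diagrams is precisely side--by--side placement.  Applying Schur--Weyl to each of the smaller tensor powers separately and multiplying the decompositions gives the alternative $(\mathbb{Q}\Sym_n,\, P_s^{\mathbb{Q}}(n) \otimes P_{r-s}^{\mathbb{Q}}(n))$--bimodule decomposition
$$
V^{\otimes r} \;\cong\; \bigoplus_{\lambda,\mu} \bigl(\Delta(\lambda_{[n]}) \otimes \Delta(\mu_{[n]})\bigr) \otimes_{\mathbb{Q}} \bigl(\Delta_{P_s^{\mathbb{Q}}(n)}(\lambda) \otimes \Delta_{P_{r-s}^{\mathbb{Q}}(n)}(\mu)\bigr).
$$
Decomposing each tensor product $\Delta(\lambda_{[n]}) \otimes \Delta(\mu_{[n]})$ further into $\mathbb{Q}\Sym_n$--isotypic components and comparing with the first decomposition of $V^{\otimes r}$ produces
$$
\Res^{P_r^{\mathbb{Q}}(n)}_{P_s^{\mathbb{Q}}(n) \otimes P_{r-s}^{\mathbb{Q}}(n)}\! \Delta_{P_r^{\mathbb{Q}}(n)}(\nu) \;\cong\; \bigoplus_{\lambda,\mu} \Hom_{\mathbb{Q}\Sym_n}\!\bigl(\Delta(\lambda_{[n]}) \otimes \Delta(\mu_{[n]}),\, \Delta(\nu_{[n]})\bigr) \otimes_{\mathbb{Q}} \bigl(\Delta(\lambda) \otimes \Delta(\mu)\bigr)
$$
as $P_s^{\mathbb{Q}}(n) \otimes P_{r-s}^{\mathbb{Q}}(n)$--modules.

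By semisimplicity of $P_s^{\mathbb{Q}}(n) \otimes P_{r-s}^{\mathbb{Q}}(n)$ and Schur's lemma, the multiplicity of the simple $\Delta(\lambda)\otimes \Delta(\mu)$ in $\Res \Delta_{P_r^{\mathbb{Q}}(n)}(\nu)$ is the dimension of the Hom space $\Hom_{P_s^{\mathbb{Q}}(n) \otimes P_{r-s}^{\mathbb{Q}}(n)}\!\bigl(\Delta(\lambda) \otimes \Delta(\mu),\, \Res \Delta_{P_r^{\mathbb{Q}}(n)}(\nu)\bigr)$.  Finally, I would invoke \myref{Proposition}{fdsakhjasldfhlhsdfsdfhl} with $A_r = P_r^{\mathbb{Q}}(n)$ and $M = \Delta_{P_{r-s}^{\mathbb{Q}}(n)}(\mu)$ to identify this space with $\Hom_{P_{r-s}^{\mathbb{Q}}(n)}\!\bigl(\Delta(\mu),\, \Delta(\nu \setminus \lambda)\bigr)$, which proves that the two Hom spaces appearing in the theorem statement have the same (finite) $\mathbb{Q}$--dimension and are therefore isomorphic as $\mathbb{Q}$--vector spaces.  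The main obstacle will be setting up the Schur--Weyl duality with matching left/right and duality conventions so that the $P_s^{\mathbb{Q}}(n) \otimes P_{r-s}^{\mathbb{Q}}(n)$--module structure on $V^{\otimes s} \otimes V^{\otimes (r-s)}$ obtained from Schur--Weyl genuinely coincides with the one induced via $\varphi_{r,s}$; once this equivariance is in place, the remainder is an exercise in comparing isotypic components in a semisimple decomposition and a direct application of \myref{Proposition}{fdsakhjasldfhlhsdfsdfhl}.
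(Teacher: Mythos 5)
Your argument is correct and follows essentially the same route as the paper: the paper's proof simply combines \myref{Proposition}{fdsakhjasldfhlhsdfsdfhl} with a citation to \cite[Corollary 3.4]{BDO15}, and that cited corollary is precisely the Schur--Weyl see-saw identification of $\Hom_{\mathbb{Q}\Sym_n}(\Delta(\lambda_{[n]})\otimes\Delta(\mu_{[n]}),\Delta(\nu_{[n]}))$ with the multiplicity space of $\Delta(\lambda)\otimes\Delta(\mu)$ in $\Res^{P_r^{\mathbb{Q}}(n)}_{P_s^{\mathbb{Q}}(n)\otimes P_{r-s}^{\mathbb{Q}}(n)}\Delta_{P_r^{\mathbb{Q}}(n)}(\nu)$ that you derive by hand from the two bimodule decompositions of $V^{\otimes r}$. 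Your closing caveat about matching the $\varphi_{r,s}$-action with side-by-side placement of diagrams is exactly the compatibility the paper relies on implicitly.
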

\begin{proof}
This follows immediately from \myref{Proposition}{fdsakhjasldfhlhsdfsdfhl} and \cite[Corollary 3.4]{BDO15}.
\end{proof}

\noindent Given  $\lambda \vdash r-s$, $\mu \vdash s$ and $\nu \vdash r$  and $n\geq 2r$ we are interested in the multiplicities 
  $$
P_{\lambda,\mu}^\nu = 
\dim_{\mathbb{Q}}
{\rm Hom}_{{P}^{\mathbb{Q}}_{ r-s}(n)}
(\c_\mu {P^{\mathbb{Q}}_{ r-s}(n)}  , \Delta^{\mathbb{Q}}_{ r-s}(\nu \setminus\lambda) )
$$
$$
p_{\lambda,\mu}^\nu = \dim_{\mathbb{Q}}
{\rm Hom}_{{P}^{\mathbb{Q}}_{ r-s}(n)}
(\Delta^{\mathbb{Q}}_{ r-s}(\mu) , \Delta^{\mathbb{Q}}_{ r-s}(\nu \setminus\lambda) )$$
 for a cell  quasi-idempotent $\c_\mu \in {P^{\mathbb{Q}}_{ r-s} }(n)$.  
  By \myref{Theorem}{thm} and \cite{BDO15}, the coefficients $   {p}_{\lambda,\mu}^\nu$ are equal to the {\sf stable Kronecker coefficients}.
   These coefficients  have been described as   `perhaps the most challenging, deep and mysterious objects in algebraic combinatorics'  %\cite{PP1}.  
   ~\cite{MR3589885}.
On the other hand, the coefficients $  P_{\lambda,\mu}^\nu $ do not seem to have been studied anywhere in the literature.
Motivated by the classical case, we ask the following questions:
\begin{enumerate}[label={$\bullet$},leftmargin=*,itemsep=-0.05em]
\item
 can one interpret the coefficients $ P_{\lambda,\mu}^\nu $ and  $p_{\lambda,\mu}^\nu $ in terms of the combinatorics of skew-tableaux for the partition algebra?   
\item  do there exist natural generalizations of   the {\sf semistandard} and {\sf lattice permutation} conditions in this setting? 
\item    do   the coefficients  $P_{\lambda,\mu}^\nu $ provide a first step towards understanding the stable Kronecker coefficients $p_{\lambda,\mu}^\nu $? 
\end{enumerate} 
The first two authors shall address these questions in an upcoming series of papers with Maud De Visscher. 
In particular, we use the above interpretation  to provide a positive combinatorial description of the stable Kronecker coefficients labelled 
 by an infinite family of  triples of partitions (including the Littlewood--Richardson coefficients, and Kronecker coefficients indexed by two two-row partitions as important examples).

    \begin{ack}
We would like to thank 
the Royal Commission for the Exhibition of 1851 
and 
EPSRC grant EP/L01078X/1 for   financial support. % during this project.
\end{ack}

\bibliographystyle{amsplain}

\bibliography{skew}

\def\Dbar{\leavevmode\lower.6ex\hbox to 0pt{\hskip-.23ex \accent"16\hss}D}
\providecommand{\bysame}{\leavevmode\hbox to3em{\hrulefill}\thinspace}
\providecommand{\MR}{\relax\ifhmode\unskip\space\fi MR }
% \MRhref is called by the amsart/book/proc definition of \MR.
\providecommand{\MRhref}[2]{%
  \href{http://www.ams.org/mathscinet-getitem?mr=#1}{#2}
}
\providecommand{\href}[2]{#2}
\begin{thebibliography}{10}

\bibitem{BDE2015}
C.~Bowman, M.~De~Visscher, and J.~Enyang, \emph{Simple modules for the
  partition algebra and stability of {K}ronecker coefficients}, in preparation
  (2015).

\bibitem{BDO15}
C.~Bowman, M.~De~Visscher, and R.~Orellana, \emph{The partition algebra and the
  {K}ronecker coefficients}, Trans. Amer. Math. Soc. \textbf{367} (2015),
  no.~5, 3647--3667. \MR{3314819}

\bibitem{BEG}
C.~{Bowman}, J.~{Enyang}, and F.~{Goodman}, \emph{{The cellular second
  fundamental theorem of invariant theory for classical groups}}, preprint
  (2016), \href{http://arxiv.org/abs/1610.0900}{arXiv 1610.0900}.

\bibitem{EG:2012}
John Enyang and F.~M. Goodman, \emph{Cellular {B}ases for {A}lgebras with a
  {J}ones {B}asic {C}onstruction}, Algebr. Represent. Theory \textbf{20}
  (2017), no.~1, 71--121. \MR{3606482}

\bibitem{MR3065998}
T.~Geetha and F.~M. Goodman, \emph{Cellularity of wreath product algebras and
  {$A$}-{B}rauer algebras}, J. Algebra \textbf{389} (2013), 151--190.
  \MR{3065998}

\bibitem{MR2794027}
F.~M. Goodman and J.~Graber, \emph{Cellularity and the {J}ones basic
  construction}, Adv. in Appl. Math. \textbf{46} (2011), no.~1-4, 312--362.
  \MR{2794027}

\bibitem{MR2774622}
\bysame, \emph{On cellular algebras with {J}ucys {M}urphy elements}, J. Algebra
  \textbf{330} (2011), 147--176. \MR{2774622}

\bibitem{GKT-2014}
F.~M. {Goodman}, Ross {Kilgore}, and Nicholas {Teff}, \emph{{A cell filtration
  of the restriction of a cell module}}, preprint (2015),
  \href{http://arxiv.org/abs/1504.02136}{arXiv 1504.02136}.

\bibitem{MR1376244}
J.~J. Graham and G.~I. Lehrer, \emph{Cellular algebras}, Invent. Math.
  \textbf{123} (1996), no.~1, 1--34. \MR{1376244}

\bibitem{MR513828}
G.~D. James, \emph{The representation theory of the symmetric groups}, Lecture
  Notes in Mathematics, vol. 682, Springer, Berlin, 1978. \MR{513828}

\bibitem{MR1711316}
A.~Mathas, \emph{Iwahori-{H}ecke algebras and {S}chur algebras of the symmetric
  group}, University Lecture Series, vol.~15, American Mathematical Society,
  Providence, RI, 1999. \MR{1711316}

\bibitem{MR2414949}
\bysame, \emph{Seminormal forms and {G}ram determinants for cellular algebras},
  J. Reine Angew. Math. \textbf{619} (2008), 141--173, With an appendix by
  Marcos Soriano. \MR{2414949}

\bibitem{Mathas:2016}
A.~{Mathas}, \emph{{Restricting Specht modules of cyclotomic Hecke algebras}},
  Science China Mathematics (to appear).

\bibitem{MR1194316}
G.~E. Murphy, \emph{On the representation theory of the symmetric groups and
  associated {H}ecke algebras}, J. Algebra \textbf{152} (1992), no.~2,
  492--513. \MR{1194316}

\bibitem{MR1327362}
\bysame, \emph{The representations of {H}ecke algebras of type {$A\sb n$}}, J.
  Algebra \textbf{173} (1995), no.~1, 97--121. \MR{1327362}

\bibitem{MR1443185}
A.~Okounkov and A.~Vershik, \emph{A new approach to representation theory of
  symmetric groups}, Selecta Math. (N.S.) \textbf{2} (1996), no.~4, 581--605.
  \MR{1443185 (99g:20024)}

\bibitem{MR2050688}
\bysame, \emph{A new approach to the representation theory of the symmetric
  groups. {II}}, Journal of Mathematical Sciences \textbf{131} (2005),
  5471--5494.

\bibitem{MR3589885}
Igor Pak and Greta Panova, \emph{Bounds on certain classes of {K}ronecker and
  {$q$}-binomial coefficients}, J. Combin. Theory Ser. A \textbf{147} (2017),
  1--17. \MR{3589885}

\bibitem{MR528580}
M.~H. Peel and G.~D. James, \emph{Specht series for skew representations of
  symmetric groups}, J. Algebra \textbf{56} (1979), no.~2, 343--364.
  \MR{528580}

\bibitem{MR1676282}
Richard~P. Stanley, \emph{Enumerative combinatorics. {V}ol. 2}, Cambridge
  Studies in Advanced Mathematics, vol.~62, Cambridge University Press,
  Cambridge, 1999, With a foreword by Gian-Carlo Rota and appendix 1 by Sergey
  Fomin. \MR{1676282}

\end{thebibliography}

\end{document}